\newcommand{\BR}{\mathbb{R}}
\newcommand{\SL}{\sum\limits}
\newcommand{\al}{\alpha}
\newcommand{\de}{\delta}
\newcommand{\MP}{\mathbf P}
\newcommand{\CK}{\mathcal K}
\newcommand{\CP}{\mathcal P}
\newcommand{\oa}{\omega}
\newcommand{\si}{\sigma}
\newcommand{\pa}{\partial}
\renewcommand{\phi}{\varphi}
\newcommand{\eps}{\varepsilon}
\newcommand{\fn}{\mathfrak{n}}
\newcommand{\Ra}{\Rightarrow}
\newcommand{\Lra}{\Leftrightarrow}
\newcommand{\ol}{\overline}
\newcommand{\CM}{\mathcal M}
\newcommand{\norm}[1]{\lVert#1\rVert}
\renewcommand{\comment}[1]{}
\newcommand{\md}{\mathrm{d}}
\newcommand{\CV}{\mathcal V}
\DeclareMathOperator{\SRBM}{SRBM}
\DeclareMathOperator{\mes}{mes}
\DeclareMathOperator{\Int}{int}
\DeclareMathOperator{\dist}{dist}
\begin{document}

\theoremstyle{plain}
\newtheorem{thm}{Theorem}[section]
\newtheorem*{thmnonumber}{Theorem}
\newtheorem{lemma}[thm]{Lemma}
\newtheorem{prop}[thm]{Proposition}
\newtheorem{cor}[thm]{Corollary}
\newtheorem{open}[thm]{Open Problem}

\theoremstyle{definition}
\newtheorem{defn}{Definition}
\newtheorem{asmp}{Assumption}
\newtheorem{notn}{Notation}
\newtheorem{prb}{Problem}

\theoremstyle{remark}
\newtheorem{rmk}{Remark}
\newtheorem{exm}{Example}
\newtheorem{clm}{Claim}

\author{Andrey Sarantsev}

\title[Weak Convergence of Obliquely Reflected Diffusions]{Weak Convergence of Obliquely Reflected Diffusions} 

\address{Department of Statistics and Applied Probability, University of California, Santa Barbara}

\email{sarantsev@pstat.ucsb.edu}

\date{May 2, 2017. Version 27}

\keywords{Reflected diffusions, oblique reflection, Hausdorff convergence, Wijsman convergence, weak convergence}

\subjclass[2010]{Primary 60J60, secondary 60J55, 60J65, 60H10}

\begin{abstract}
Burdzy and Chen (1998) proved results on weak convergence of multidimensional normally reflected Brownian motions. We generalize their work by considering obliquely reflected diffusion processes. We require weak convergence of domains, which is  stronger than convergence in Wijsman topology, but weaker than convergence in Hausdorff topology. 
\end{abstract}

\maketitle

\section{Introduction} Consider a sequence of reflected diffusions $(Z_n)_{n \ge 0}$: for every $n = 0, 1, 2, \ldots$ let $Z_n = (Z_n(t), t \ge 0)$ be a reflected diffusion in $\ol{D}_n$, where $D_n \subseteq \BR^d$ is an open connected subset (bounded or unbounded). When $Z_n(t) \in D_n$, this process is in the interior of its state space, it moves as a diffusion with drift vector field $g_n(\cdot)$ and covariance matrix field $A_n(\cdot)$. When $Z_n$ hits the boundary $\pa D_n$ at a point $z \in \pa D_n$, it is instantaneously reflected inside $D_n$, according to the direction $r_n(z)$. Here, $r_n : \pa D_n \to \BR^d$ is a continuous vector field, defined on the boundary $\pa D_n$. In a more general setting, this boundary can have non-smooth parts (say, the origin for $D_n = (0, \infty)^2$); then the reflection field $r_n$ is defined everywhere on the boundary, except these non-smooth parts. If $r_n(z)$ is the inward unit normal vector to $\pa D_n$ at a point $z \in \pa D_n$, then this reflection is {\it normal} at this point $z$. Otherwise, it is {\it oblique}. We assume that the initial condition is $Z_n(0) = z_n$. 

The main topic of this paper is: When do $Z_n$ weakly converge to $Z_0$ as elements of $C([0, T], \BR^d)$ of continuous functions $[0, T] \to \BR^d$? To establish this convergence, we need convergence of domains, drift vector fields and covariance matrix fields, reflection fields, and initial conditions:
$$
D_n \to D_0,\ g_n \to g_0,\ A_n \to A_0,\ r_n \to r_0,\ z_n \to z_0.
$$
But in which sense do we need to require this convergence? This article provides an answer to this question. The convergence of domains should be in what we call the {\it weak sense}, which is slightly stronger than in the {\it Wijsman topology}, see \cite{Wijsman1966, Wijsman1994}. The convergence of functions poses certain problems, since they are defined on different domains. However, we find a way around this; we define what turns out to be a generalization of locally uniform convergence (and which, in fact, is locally uniform convergence if these functions, say $g_n$, are defined on the same domain). 

Convergence of reflected Brownian motions has been studied in \cite{BurdzyChen1998}
for the case of normal reflection and increasing sequence of domains $D_n \uparrow D_0$. In this article, we study this question in a more general setting: the reflection can be oblique, the concept of convergence $D_n \to D_0$ is more general than $D_n \uparrow D_0$, and we have general diffusion processes (with general drift and covariance fields instead of constant ones) instead of a Brownian motion. 

However, in some sense our conditions are more restrictive: we require the boundary $\pa D_n$ to be smooth, except only a ``small'' subset; in the paper \cite{BurdzyChen1998}, it is only assumed that the boundary is continuous and the domain is bounded. In addition, we assume that the reflection fields $r_n \to r_0$ in a certain sense. In the paper \cite{BurdzyChen1998}, there is no additional assumption that reflection (in their case, normal) fields converge. Last but not least, in our paper the limiting process $Z_0$ should not hit non-smooth parts of the boundary. There are sufficient conditions for this to be true when the domain $D_0$ is a convex polyhedron, see for example \cite{MyOwn3, Wil1987}; see also a related paper \cite{Marshall}. An example of a reflected Brownian motion hitting or not hitting non-smooth parts of the boundary can be found in  Proposition~\ref{prop:non-smooth-result}.  

A related question is an invariance principle for a reflected Brownian motion in a convex polyhedron or, more generally, piecewise smooth domains. This has been studied in \cite{Wil1998b, KangWilliams2007}. See also a recent paper \cite{KR2014} which uses similar techniques to prove well-posedness of a corresponding submartingale problem. We use similar techniques to our paper \cite{MyOwn8}, which deals with penalty method for obliquely reflected diffusions. The difference is that the paper \cite{MyOwn8} approximated an obliquely reflected diffusion by a solution of an SDE without reflection, but with an appropriately chosen drift vector field. The current paper approximates on obliquely reflected diffusion by another obliquely reflected diffusion. %We are going t use the latter article to prove another version of the convergence statement, now for the reflected Brownian motions instead of reflected diffusions. 

\subsection{Organization of the paper} Section 2 contains definitions and the main result (Theorem~\ref{thm:main}). In Section 3, we apply these results to reflected Brownian motion in the orthant and in other convex polyhedral domains. Section 4 is devoted to the proof of Theorem~\ref{thm:main}. Section 5 contains results for the case when $D_n \to \BR^d$, that is, the limiting process $Z_0$ is actually a {\it non-reflected} diffusion. %Section 6 contains another convergence statement for reflected Brownian motions, deduced from \cite{KangWilliams2007}. 
The Appendix contains some technical lemmata.

\subsection{Notation} For a vector or a matrix $a$, the symbol $a'$ denotes the transpose of $a$. Denote the weak convergence by $\Ra$. Let $C([0, T], \BR^d)$ be the space of all continuous functions $[0, T] \to \BR^d$, with the max-norm. 
 For $d = 1$, we simply write $C[0, T]$. For two vectors $a = (a_1, \ldots, a_d)'$ and $b = (b_1, \ldots, b_d)'$ in $\BR^d$, we denote their dot product by
$a\cdot b = a_1b_1 + \ldots + a_db_d$. The Euclidean norm of $a$ is given by
$\norm{a} = \left[a_1^2 + \ldots + a_d^2\right]^{1/2}$. For $x = (x_1, \ldots, x_d)' \in \BR^d$ and $y = (y_1, \ldots, y_d)' \in \BR^d$, we write $x \ge y$ if $x_i \ge y_i$ for $i = 1, \ldots, d$ and $x > y$ if $x_i > y_i$ for $i = 1, \ldots, d$; similarly for $x \le y$ and $x < y$. For $x \in \BR^d$, $\eps > 0$, let $U(x, \eps) := \{y \in \BR^d\mid \norm{x-y} < \eps\}$ be the $\eps$-neighborhood of $x$. For a point $x \in \BR^d$ and a set $E \subseteq \BR^d$, denote the distance from $x$ to $E$ by $\dist(x, E)$. For a set $E \subseteq \BR^d$ and $r > 0$, denote $U_r(E) = \{x \in \BR^d\mid \dist(x, E) < r\}$. For two sets $E, F \subseteq \BR^d$, denote the distance from $E$ to $F$ by $\dist(E, F)$. For a subset $E \subseteq \BR^d$, we denote the set of its interior points by $\Int E$, and the complement $\BR^d\setminus E$ by $E^c$. We denote its closure by $\ol{E}$. We write $f \in C^r$ for $r$ times continuously differentiable function $f$, defined on some subset of $\BR^d$. We also say that a subset $E$ of $\BR^d$ is $C^r$ if $E$ is an $r$ times continuously differentiable hypersurface in $\BR^d$. The symbol $\mes(E)$ denotes the Lebesgue measure of a set $E$ in $\BR$ or $\BR^d$, depending on the context. The set of all $d\times d$ positive definite symmetric matrices is denoted by $P_d$. Define the {\it modulus of continuity} for a function $f : \BR_+ \to \BR^d$: for $T> 0$ and $\de > 0$, 
$$
\oa(f, [0, T], \de) := \sup\limits_{\substack{t, s \in [0, T]\\ |t - s| \le \de}}\norm{f(t) - f(s)}.
$$

\section{Definitions and the Main Result}

\subsection{Definition of a reflected diffusion} Fix $d \ge 1$, the dimension. Consider a domain (open connected subset) $D \subseteq \BR^d$. Take a function $g : \ol{D} \to \BR^d$ and a matrix-valued function $A : \ol{D} \to P_d$. Let $\si(x) := A^{1/2}(x)$ be the positive definite matrix square root of $A(x)$. Assume that the boundary $\pa D$ is $C^2$ everywhere, except a closed subset $\CV \subseteq \pa D$; that is, $\pa D\setminus\CV$ is $C^2$. The set $\CV$ is called an {\it exceptional set}, or {\it non-smooth parts of the boundary} $\pa D$. For example, if $D = \Int S$, where $S := \BR_+^d$ is an interior of the positive $d$-dimensional orthant, then the boundary $\pa D = \pa S$ consists of $d$ {\it faces}: $S_i := \{x \in S\mid x_i = 0\}$, and $\CV := \cup_{1 \le i < j \le d}\left(D_i\cap D_j\right)$. If $D$ is smooth, or, more precisely, the whole boundary $\pa D$ is $C^2$, then we let $\CV := \varnothing$. 

Denote for $x \in \pa D\setminus\CV$ the {\it inward unit normal vector} by $\fn(x)$. Take a vector field $r : \pa D\setminus\CV \to \BR^d$ such that $r(x)\cdot\fn(x) > 0$. (Without loss of generality, we can assume $r(x)\cdot \fn(x) = 1$; a very short proof of this fact is given in \cite{MyOwn8}.) The function $r$ is called a {\it reflection field}. We note that the set $\CV$ includes, but is not limited to, the parts of the boundary $\pa D$ where it is not $C^2$. It also might include points of the boundary where $\pa D$ is smooth, but the reflection field $r$ is undefined. Slightly abusing the notation, we call the collection of all these points {\it non-smooth parts of the boundary}. 

We would like to define a {\it reflected diffusion} $Z = (Z(t), t \ge 0)$ in $\ol{D}$ with {\it drift coefficient} $g$, {\it covariance matrix} $A$, and {\it reflection field} $r$. 
This is a process that:

(i) behaves as a solution of an SDE with drift coefficient $g$ and covariance matrix $A$, so long as it stays inside $D$;

(ii) when it hits the boundary $\pa D$ at a point $x \in \pa D\setminus\CV$, it reflects according to the reflection vector $r(x)$; if $r(x) = \fn(x)$, this reflection is called {\it normal}, and otherwise it is called {\it oblique}. 

\begin{defn} Take $d$ i.i.d. standard Brownian motions $W_1, \ldots, W_d$ and let $W = (W_1,\ldots, W_d)'$. A continuous adapted process $Z = (Z(t), t \ge 0)$ with values in 
$\ol{D}$ is called a {\it reflected diffusion} in $\ol{D}$, {\it stopped after hitting} $\CV$ with {\it drift vector field} $g$, {\it covariance matrix field} $A$, and {\it reflection field} $r$, {\it starting from} $Z(0) = z_0$, if there exists a real-valued continuous adapted nondecreasing process $l = (l(t), t \ge 0)$ with $l(0) = 0$, such that $l$ can increase only when $Z \in \pa D$, and
\begin{equation}
\label{eq:def-main}
Z(t) = z_0 + \int_0^{t\wedge\tau_{\CV}}g(Z(s))\md s + \int_0^{t\wedge\tau_{\CV}}\si(Z(s))\md W(s) + \int_0^{t\wedge\tau_{\CV}}r(Z(s))\md l(s),\ \ t \ge 0,
\end{equation}
where $\tau_{\CV} := \min\{t \ge 0\mid Z(t) \in \CV\}$, and $\si(x) := A^{1/2}(x)$ is the positive definite symmetric square root of the matrix $A(x)$, for every $x \in \ol{D}$. The process $L(t) := \int_0^{t\wedge\tau_{\CV}}r(Z(s))\md l(s)$, $t \ge 0$, is called the {\it reflection term}. We say this reflected diffusion {\it avoids non-smooth parts of the boundary} if $\tau_{\CV} = \infty$ a.s. 
\label{defn:main}
\end{defn}

We can write~\eqref{eq:def-main} in the differential form:
$$
\md Z(t) = g(Z(t))\md t + \si(Z(t))\md W(t) + r(Z(t))\md l(t),\ \ t < \tau_{\CV}. 
$$
The property that $l$ can increase only when $Z \in \pa D$ can be written formally as
$$
\int_0^{\infty}1(Z(t) \in D)\md l(t) = 0.
$$
There are several conditions for weak or strong existence and uniqueness of this diffusion, discussed in the articles mentioned in the Introduction. In this article, we simply assume that it exists in the weak sense, is unique in law, and does not hit non-smooth parts of the boundary. More precisely, let us state the following assumptions. 

\begin{asmp}
The exceptional set $\CV$ is ``small enough''; namely, for every $x \in \BR^d$ we have: $$
\dist(x, \pa D) = \dist(x, \pa D\setminus\CV).
$$
\label{asmp:boundary}
\end{asmp}

For example, this is true for an orthant $D = (0, \infty)^d$, or a convex polyhedron $D$ (see Section 3). 

\begin{asmp} The reflection field $r : \pa D \setminus\CV \to \BR^d$ is continuous on $\pa D\setminus\CV$. Moreover, as mentioned above, $r(z)\cdot \fn(z) = 1$ for $z \in \pa D\setminus\CV$. 
\end{asmp}

\begin{asmp} The reflected diffusion from Definition~\ref{defn:main} with parameters $g, A, r$, starting from $z_0$, exists and is unique in the weak sense.  
\end{asmp}

A particular case of a reflected diffusion is {\it reflected Brownian motion}, when the drift coefficient $g(x)$ and the covariance matrix $A(x)$ do not depend on $x$: $g(x) \equiv g$, and $A(x) \equiv A$. An example of a reflected Brownian motion hitting or not hitting non-smooth parts of the boundary is given in Section 3, Proposition~\ref{prop:non-smooth-result}.

\subsection{Weak convergence of domains} For each $n = 0, 1, 2, \ldots$, define the function $\phi_n : \BR^d \to \BR$ to be the {\it signed distance} to $\pa D_n$:
$$
\phi_n(x) := 
\begin{cases}
\dist(x, \pa D_n),\ \ x \in D_n;\\
0,\ \ x \in \pa D_n;\\
-\dist(x, \pa D_n),\ \ x \in \BR^d\setminus\ol{D_n}.
\end{cases}
$$

\begin{defn} We say that the sequence of domains $(D_n)_{n \ge 1}$ {\it converges weakly} to the domain $D_0$ in $\BR^d$, and write $D_n \Ra D_0$, if $\phi_n(x) \to \phi_0(x)$ for every $x \in \BR^d$. 
\label{def:conv-domain}
\end{defn}

There are other well-known concepts of set convergence in $\BR^d$. 

\begin{defn} Take subsets $E_n \subseteq \BR^d,\ n = 0, 1, 2, \ldots$ We say that $E_n \to E_0$ in {\it Wijsman topology} if $\dist(x, E_n) \to \dist(x, E_0)$ for all $x \in \BR^d$. If this convergence is uniform for $x \in \BR^d$, then $E_n \to E_0$ in {\it Hausdorff topology}. An equivalent definition of Hausdorff convergence is through {\it Hausdorff distance}, which is defined for $A, B \subseteq \BR^d$ as follows:
$$
d_H(A, B) = \inf\{\eps > 0\mid A \subseteq U_{\eps}(B)\ \ \mbox{and}\ \ B \subseteq U_{\eps}(A)\}.
$$
\end{defn}

For Wijsman convergence, we can substitute $E_n$ by their closures, because 
$$
\dist(x, E_n) \equiv \dist(x, \ol{E}_n).
$$

There are equivalent definitions of Hausdorff convergence, distance and topology. We refer the reader to the book \cite{MunkresBook}. For Wijsman convergence, see the articles \cite{Wijsman1966, Wijsman1994}. In a sense, both Wijsman convergence and weak convergence are ``local'' analogues of Hausdorff convergence, just as locally uniform convergence of functions with respect to uniform convergence. Let us state a few elementary properties of Wijsman and weak convergence, with the proofs postponed until Appendix.

\begin{lemma}
\label{lemma:Wijsman}
Suppose $E_n \to E_0$ in Wijsman topology. Then:

(i) $\dist(x, E_n) \to \dist(x, E_0)$ uniformly on every compact subset $\CK \subseteq \BR^d$;

(ii) if $x_n \in E_n$ and $x_n \to x_0$, then $x_0 \in \ol{E}_0$.
\end{lemma}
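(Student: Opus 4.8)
\textbf{Proof plan for Lemma~\ref{lemma:Wijsman}.}

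For part (i), the plan is to upgrade pointwise convergence to uniform convergence on compacts by exploiting equicontinuity. The key observation is that each function $x \mapsto \dist(x, E_n)$ is $1$-Lipschitz (this is the standard triangle-inequality estimate $|\dist(x, E_n) - \dist(y, E_n)| \le \norm{x - y}$), so the family $\{\dist(\cdot, E_n)\}_{n \ge 0}$ is uniformly equicontinuous. Given a compact set $\CK$ and $\eps > 0$, I would cover $\CK$ by finitely many balls of radius $\eps/3$ centered at points $x_1, \dots, x_m \in \CK$; by pointwise convergence choose $N$ so that $|\dist(x_j, E_n) - \dist(x_j, E_0)| < \eps/3$ for all $n \ge N$ and all $j$; then for arbitrary $x \in \CK$, pick $x_j$ with $\norm{x - x_j} < \eps/3$ and combine the three estimates via Lipschitzness of both $\dist(\cdot, E_n)$ and $\dist(\cdot, E_0)$. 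This is essentially the Arzel\`a--Ascoli argument specialized to this concrete equi-Lipschitz family, and it is routine.

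For part (ii), suppose $x_n \in E_n$ and $x_n \to x_0$, and I want to show $x_0 \in \ol{E}_0$, i.e. $\dist(x_0, E_0) = 0$. The natural route is: $\dist(x_0, E_0) = \lim_{n} \dist(x_0, E_n)$ by Wijsman convergence, and $\dist(x_0, E_n) \le \norm{x_0 - x_n} \to 0$ since $x_n \in E_n$. Hence $\dist(x_0, E_0) = 0$, which since distance-to-a-set equals distance-to-its-closure gives $x_0 \in \ol{E}_0$. One subtlety worth a line: to pass the limit cleanly one can write $\dist(x_0, E_n) \le \dist(x_n, E_n) + \norm{x_0 - x_n} = \norm{x_0 - x_n}$ (using $x_n \in E_n$ so $\dist(x_n, E_n) = 0$), and then let $n \to \infty$ using convergence of $\dist(x_0, E_n)$ to $\dist(x_0, E_0)$.

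Neither part presents a genuine obstacle; if I had to name the most delicate point, it is simply being careful in part (i) that \emph{both} the approximating functions and the limit function are Lipschitz with the same constant, so that the finite-net argument actually closes — but this is immediate from the universal $1$-Lipschitz property of $\dist(\cdot, E)$. Everything else is bookkeeping with the triangle inequality.
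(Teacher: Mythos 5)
Your proof is correct. For part (i) you and the paper rely on the same key fact---the universal $1$-Lipschitz property of $x \mapsto \dist(x,E)$---but package it differently: the paper argues by contradiction (it extracts a convergent subsequence of points where the discrepancy stays at least $\eps$ and transfers the discrepancy to the limit point via Lipschitzness, exactly as in its proof of the implication (i) $\Ra$ (iii) of Lemma~\ref{lemma:equiv}), whereas you run the direct $\eps/3$ finite-net argument; the two are interchangeable. For part (ii) your route is slightly more economical than the paper's: you need only pointwise Wijsman convergence at the single point $x_0$, via $\dist(x_0,E_0)=\lim_{n}\dist(x_0,E_n)\le \lim_{n}\norm{x_0-x_n}=0$, while the paper first invokes the uniform convergence of part (i) on the compact set consisting of the points $x_n$ and then runs a two-$\eps$ triangle-inequality chain. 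Both arguments are sound; yours has the small advantage of making (ii) logically independent of (i).
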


\begin{lemma}
\label{lemma:equiv}
The following statements for domains $D_n,\ n = 0, 1, 2, \ldots$ are equivalent:

(i) $D_n \Ra D_0$;

(ii) $D_n \to D_0$ and $D_n^c \to D_0^c$ in Wijsman topology;

(iii) $\phi_n(x) \to \phi_0(x)$ uniformly on every compact subset $\CK \subseteq \BR^d$;

(iv) for every compact subset $\CK \subseteq \BR^d$ and a sequence $(\eps_n)_{n \ge 1}$ with $\eps_n \to 0$, we have: 
$$
\max\limits_{\substack{x_n, x_0 \in \CK\\ \norm{x_n - x_0} \le \eps_n}} \left|\phi_n(x_n) - \phi_0(x_0)\right| \to 0\ \ \mbox{as}\ \ n \to \infty;
$$

(v) for every $T > 0$ and every sequence $(f_n)_{n \ge 1}$ of functions $f_n : [0, T] \to \BR^d$ which converges uniformly on $[0, T]$ to a continuous function $f_0 : [0, T] \to \BR^d$, we have: $\phi_n(f_n(\cdot)) \to \phi_0(f_0(\cdot))$ uniformly on $[0, T]$;

(vi) $\pa D_n \to \pa D_0$ in Wijsman topology, and, in addition, 
\begin{equation}
\label{eq:inclusion}
D_0 \subseteq \varliminf\limits_{n \to \infty}D_n\ \  \mbox{and}\ \  \varlimsup\limits_{n \to \infty}\ol{D}_n \subseteq \ol{D}_0;
\end{equation}

(vii) if $x_{n_k} \in \pa D_{n_k}$ and $x_{n_k} \to x_0$ for some subsequence $(n_k)_{k \ge 1}$, then $x_0 \in \pa D_0$, and~\eqref{eq:inclusion} holds. 
\end{lemma}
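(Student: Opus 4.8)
The whole lemma is powered by two elementary properties of the signed distance function, which I would record first. Each $\phi_n$ is $1$-Lipschitz on $\BR^d$: when $\phi_n(x)$ and $\phi_n(y)$ have equal sign this is the $1$-Lipschitzness of $\dist(\cdot,\pa D_n)$, and when they have opposite sign the segment $[x,y]$ meets $\pa D_n$ by connectedness, so $\|x-y\|\ge\dist(x,\pa D_n)+\dist(y,\pa D_n)=|\phi_n(x)-\phi_n(y)|$. Moreover, for any open $D_n$ a short segment/nearest-point argument gives the pointwise identity $\phi_n(x)=\dist(x,D_n^c)-\dist(x,D_n)$, and hence, with $\phi^{\pm}$ the positive and negative parts,
$$
\dist(x,D_n)=\phi_n^-(x),\qquad \dist(x,D_n^c)=\phi_n^+(x),\qquad \dist(x,\pa D_n)=|\phi_n(x)|=\dist(x,D_n)+\dist(x,D_n^c).
$$
I would set aside at the outset the degenerate cases $D_0=\BR^d$ and $D_0=\vn$.

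For the cluster (i)$\Lra$(ii)$\Lra$(iii)$\Lra$(iv)$\Lra$(v): since $\phi_n=\phi_n^+-\phi_n^-$ and $t\mapsto t^{\pm}$ is continuous, pointwise convergence $\phi_n\to\phi_0$ is equivalent to $\phi_n^{\pm}\to\phi_0^{\pm}$ pointwise, that is, to (ii); this gives (i)$\Lra$(ii) at once. Because $\{\phi_n\}$ is uniformly equicontinuous, pointwise convergence upgrades to uniform convergence on compacts by a finite-subcover argument, which is (i)$\Ra$(iii); the converse is trivial. For (iii)$\Lra$(iv) one uses $|\phi_n(x_n)-\phi_0(x_0)|\le\|x_n-x_0\|+\sup_{\CK}|\phi_n-\phi_0|$ one way, and takes $x_n=x_0$, $\eps_n=0$ the other way. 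For (iv)$\Ra$(v) apply (iv) with $\CK$ a compact neighbourhood of $f_0([0,T])$ and $\eps_n=\sup_{[0,T]}\|f_n-f_0\|$; and (v)$\Ra$(i) follows by taking each $f_n$ identically equal to a fixed point.

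For (ii)$\Lra$(vi): summing the two Wijsman convergences in (ii) and invoking $\dist(x,\pa D_n)=\dist(x,D_n)+\dist(x,D_n^c)$ gives $\pa D_n\to\pa D_0$ in Wijsman, while \eqref{eq:inclusion} holds because for $x\in D_0$ the convergence $\dist(x,D_n^c)\to\dist(x,D_0^c)>0$ forces $x\in D_n$ eventually, and dually for $\varlimsup_n\ol{D}_n\subseteq\ol{D}_0$. Conversely (vi) makes the sum $\dist(x,D_n)+\dist(x,D_n^c)$ converge to $\dist(x,\pa D_0)$; since at most one summand is nonzero for each $n$, it remains only to decide which one survives in the limit, and \eqref{eq:inclusion} settles this by a split into $x\in D_0$, $x\in\pa D_0$, $x\notin\ol{D}_0$ (in the last case $\varlimsup_n\ol{D}_n\subseteq\ol{D}_0$ forces $x\notin\ol{D}_n$ eventually, so $\dist(x,D_n^c)=0$ eventually).

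Finally (vi)$\Lra$(vii). The direction (vi)$\Ra$(vii) is immediate: \eqref{eq:inclusion} is already part of (vi), and Lemma~\ref{lemma:Wijsman}(ii), applied along a subsequence to $\pa D_n\to\pa D_0$, shows that any subsequential limit of points $x_{n_k}\in\pa D_{n_k}$ lies in $\ol{\pa D_0}=\pa D_0$. For (vii)$\Ra$(vi) I must recover $\dist(x,\pa D_n)\to\dist(x,\pa D_0)$. The bound $\varliminf_n\dist(x,\pa D_n)\ge\dist(x,\pa D_0)$ is straight from the boundary-limit clause of (vii): pick $y_n\in\pa D_n$ realizing $\dist(x,\pa D_n)$, pass to a subsequential limit $y_0$, necessarily in $\pa D_0$, so $\dist(x,\pa D_0)\le\|x-y_0\|$. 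For the reverse inequality, the identity reduces matters to $\varlimsup_n\dist(x,D_n)\le\dist(x,D_0)$ and $\varlimsup_n\dist(x,D_n^c)\le\dist(x,D_0^c)$: the first follows from \eqref{eq:inclusion} alone, by approximating $x$ (or a nearest point of $\ol{D}_0$) by points of $D_0$; the second reduces, after a short estimate, to the claim that $\dist(q_0,D_n^c)\to 0$ for every $q_0\in\pa D_0$. I expect this claim to be the main obstacle. One gets it from \eqref{eq:inclusion} by approximating $q_0\in\ol{D_0^c}$ by points lying outside $\ol{D}_0$, which are eventually in $\Int D_n^c$ because $\varlimsup_n\ol{D}_n\subseteq\ol{D}_0$; but this approximation needs every boundary point of $D_0$ to be accessible from the exterior of $\ol{D}_0$, i.e.\ $D_0$ to be a regular open set (equal to the interior of its closure). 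That holds under the paper's standing smoothness hypotheses on $\pa D_0$, though it would fail for pathological domains such as a disc with a slit removed.
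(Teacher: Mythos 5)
Your argument is correct and, in substance, the same as the paper's: the same $1$-Lipschitz and positive/negative-part identities for the signed distance, the same cycle of easy implications among (i)--(v) (constant functions for (v)$\Ra$(i), the compact-neighbourhood/$\eps_n=\sup\norm{f_n-f_0}$ trick for (iv)$\Ra$(v)), the same three-case sign analysis linking (vi) to (i)/(ii), and Lemma~\ref{lemma:Wijsman}(ii) for (vi)$\Ra$(vii). The only organizational difference is in (vii)$\Ra$(vi): you split $\dist(x,\pa D_n)=\dist(x,D_n)+\dist(x,D_n^c)$ and reduce the upper bound to the claim $\dist(q_0,D_n^c)\to 0$ for $q_0\in\pa D_0$, whereas the paper proves the auxiliary Lemma~\ref{lemma:tending}, producing points of $\pa D_n$ converging to any $q_0\in\pa D_0$ by crossing a segment from a nearby point $y\in U(q_0,\eps)\cap D_0$ to a nearby point $z\in U(q_0,\eps)\cap\ol{D}_0^c$; these are the same idea, and yours is marginally more economical since it bypasses the separate boundedness lemma (for your liminf direction just note that the minimizing points $y_n\in\pa D_n$ are automatically bounded along any subsequence on which $\dist(x,\pa D_n)$ stays bounded, the case $\varliminf=\infty$ being vacuous). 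The caveat you raise at the end is genuine and applies verbatim to the paper's own proof: the existence of the exterior point $z\in U(q_0,\eps)\cap\ol{D}_0^c$ in Lemma~\ref{lemma:tending} is asserted without comment, and it is exactly your accessibility condition; for a non-regular open set such as a disc minus a radial slit (with $D_n$ the full disc) condition (vii) holds while (i) and (vi) fail, so this step really does require the kind of boundary regularity that the paper's standing hypotheses provide. So your proposal matches the paper's proof, and your explicit identification of that implicit regularity step is a useful refinement rather than a gap.
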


%We have the following simple but useful corollary; the proof is postponed until Appendix. 

\begin{cor} Assume $D_n \Ra D_0$. 

(i) Take a sequence $(x_k)_{k \ge 1}$. If $x_k \in \ol{D_{n_k}}$ for some subsequence $(n_k)_{k \ge 1}$, and $x_k \to x_0$, then $x_0 \in \ol{D_0}$. If $x_k \in D^c_{n_k}$, and $x_k \to x_0$, then $x_0 \in D_0^c$. If $x_k \to x_0$, and $x_k \in \pa D_{n_k}$, then $x_0 \in \pa D_0$. 

(ii) For every compact subset $\CK \subseteq D_0$, there exists $n_0$ such that for $n > n_0$, we have: $\CK \subseteq D_n$. For every compact subset $\CK \subseteq \ol{D}_0^c$, there exists $n_0$ such that for $n > n_0$, we have: $\CK \subseteq \ol{D}_n^c$. 
\label{cor:simple}
\end{cor}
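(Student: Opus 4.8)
The plan is to reduce everything to the Wijsman reformulation of weak convergence in Lemma~\ref{lemma:equiv}(ii) together with Lemma~\ref{lemma:Wijsman}(ii), and then to obtain part~(ii) from part~(i) by a compactness argument.

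For part~(i), since $D_n \Ra D_0$, Lemma~\ref{lemma:equiv}(ii) gives $D_n \to D_0$ and $D_n^c \to D_0^c$ in Wijsman topology, and these convergences persist along any subsequence $(n_k)$. For the first claim I would use that $\dist(\cdot, D_n) \equiv \dist(\cdot, \ol D_n)$, so that also $\ol D_{n_k} \to \ol D_0$ in Wijsman topology, and then apply Lemma~\ref{lemma:Wijsman}(ii) to $x_k \in \ol D_{n_k}$ with $x_k \to x_0$ to conclude $x_0 \in \ol{\ol D_0} = \ol D_0$. For the second claim, I would apply Lemma~\ref{lemma:Wijsman}(ii) directly to $x_k \in D_{n_k}^c$ with $x_k \to x_0$, getting $x_0 \in \ol{D_0^c}$; since $D_0$ is open, $D_0^c$ is closed and $\ol{D_0^c} = D_0^c$, so $x_0 \in D_0^c$. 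The third claim is then immediate: $x_k \in \pa D_{n_k} \subseteq \ol D_{n_k} \cap D_{n_k}^c$, so the first two claims give $x_0 \in \ol D_0 \cap D_0^c = \pa D_0$ (using $D_0$ open once more). Equivalently, this last claim is precisely the first assertion of Lemma~\ref{lemma:equiv}(vii).

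For part~(ii), I would argue by contradiction, invoking part~(i). If $\CK \subseteq D_0$ is compact but $\CK \not\subseteq D_n$ for infinitely many $n$, then there are a subsequence $(n_k)$ and points $x_k \in \CK \cap D_{n_k}^c$; by compactness of $\CK$ a further subsequence of $(x_k)$ converges to some $x_0 \in \CK \subseteq D_0$, while part~(i) forces $x_0 \in D_0^c$ --- a contradiction. The second statement is handled symmetrically: if $\CK \subseteq \ol D_0^c$ is compact and $\CK \cap \ol D_n \neq \varnothing$ for infinitely many $n$, extract $x_k \in \CK \cap \ol D_{n_k}$ with $x_k \to x_0 \in \CK \subseteq \ol D_0^c$, and part~(i) gives $x_0 \in \ol D_0$, contradicting $x_0 \notin \ol D_0$. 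As an alternative I could read part~(ii) directly off Lemma~\ref{lemma:equiv}(iii): the continuous function $\phi_0$ has a positive minimum $\de$ on the compact set $\CK \subseteq D_0$, and the uniform convergence $\phi_n \to \phi_0$ on $\CK$ then gives $\phi_n > \de/2 > 0$, i.e. $\CK \subseteq D_n$, for all large $n$; similarly $\phi_0 \le -\de < 0$ on a compact $\CK \subseteq \ol D_0^c$ forces $\phi_n < 0$, i.e. $\CK \subseteq \ol D_n^c$, for all large $n$.

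I do not expect a serious obstacle here. The only points that need care are the bookkeeping of closures versus complements --- using throughout that each $D_n$ is open, so that $\pa D_n = \ol D_n \cap D_n^c$, $\ol{D_n^c} = D_n^c$, and $\ol D_n^c = \Int D_n^c$ --- and, in the degenerate case $D_0 = \BR^d$ (where $\pa D_0 = \varnothing$), checking that the statements and the convention $\dist(\cdot, \varnothing) = +\infty$ remain consistent; the subsequence argument above is robust to that case.
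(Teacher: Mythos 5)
Your proposal is correct and follows essentially the paper's own route: part (ii) via the uniform convergence $\phi_n \to \phi_0$ on compacts (Lemma~\ref{lemma:equiv}(iii)) and the sign of $\phi_0$ is exactly the argument given in the Appendix, while part (i), which the paper dismisses as trivial, you flesh out correctly from Lemma~\ref{lemma:equiv}(ii) and Lemma~\ref{lemma:Wijsman}(ii). Your additional contradiction-plus-compactness derivation of (ii) from (i) is a harmless variant, not a genuinely different method.
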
 

When $(D_n)_{n \ge 1}$ is a monotone sequence, this concept of convergence can be simplified. %This is stated in the following lemma, with proof postponed until Appendix. 

\begin{lemma} If $D_n \uparrow D_0$ or $\ol{D}_n \downarrow \ol{D}_0$, then $D_n \Ra D_0$. 
\label{lemma:monotone} 
\end{lemma}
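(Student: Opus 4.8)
The plan is to check the defining condition $\phi_n(x)\to\phi_0(x)$ at each fixed $x\in\BR^d$ by splitting $\phi_n$ into its positive and negative parts; equivalently one may invoke characterization~(ii) of Lemma~\ref{lemma:equiv} and verify the two Wijsman convergences, but the work is identical. The two inputs are: the elementary identity $\phi_D(x)=\dist(x,D^c)-\dist(x,D)$ for the signed distance of an open $D\subsetneq\BR^d$ (because for $x\in D$ any segment from $x$ to a point of $D^c$ must meet $\pa D$, so $\dist(x,D^c)=\dist(x,\pa D)$, and symmetrically $\dist(x,D)=\dist(x,\pa D)$ for $x\notin\ol D$), together with the sharper identity $\phi_D(x)=\dist\bigl(x,(\ol D)^c\bigr)-\dist(x,\ol D)$ when moreover $D=\Int\ol D$, which uses $\pa\ol D=\pa D$ in that case; and two monotone-set facts, proved from scratch: (a) if $E_n\uparrow E$ then $\dist(x,E_n)\downarrow\dist(x,E)$ for all $x$, and (b) if $F_n\downarrow F$ with every $F_n$ closed then $\dist(x,F_n)\uparrow\dist(x,F)$ for all $x$.

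For (a), monotonicity gives $\dist(x,E_n)\ge\dist(x,E)$, and given $\eps>0$ one picks $y\in E=\bigcup_n E_n$ with $\norm{x-y}<\dist(x,E)+\eps$; then $y\in E_n$ for $n$ large, so $\dist(x,E_n)<\dist(x,E)+\eps$ eventually. For (b), monotonicity gives $\dist(x,F_n)\le\dist(x,F)$; for the reverse, let $L=\sup_n\dist(x,F_n)\le\dist(x,F)$, choose $y_n\in F_n$ realizing $\dist(x,F_n)$ (these exist and satisfy $\norm{x-y_n}\le L$, so lie in a fixed compact set), and extract $y_{n_k}\to y_*$; for each $m$ one has $y_{n_k}\in F_{n_k}\subseteq F_m$ once $n_k\ge m$ and $F_m$ is closed, so $y_*\in F_m$, hence $y_*\in\bigcap_m F_m=F$ and $\dist(x,F)\le\norm{x-y_*}=L$.

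With these in hand both cases are immediate. If $D_n\uparrow D_0$: apply (a) to $D_n\uparrow D_0$ to get $\dist(x,D_n)\to\dist(x,D_0)$, and (b) to the closed sets $D_n^c\downarrow D_0^c$ to get $\dist(x,D_n^c)\to\dist(x,D_0^c)$; subtract and use the first identity for $D_n$ and $D_0$. If $\ol D_n\downarrow\ol D_0$: apply (b) to the closed sets $\ol D_n\downarrow\ol D_0$ to get $\dist(x,\ol D_n)\to\dist(x,\ol D_0)$, and (a) to the open sets $(\ol D_n)^c\uparrow(\ol D_0)^c$ to get $\dist\bigl(x,(\ol D_n)^c\bigr)\to\dist\bigl(x,(\ol D_0)^c\bigr)$; subtract and use the second identity, so that $\phi_n(x)=\dist\bigl(x,(\ol D_n)^c\bigr)-\dist(x,\ol D_n)\to\phi_0(x)$.

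The one genuinely non-formal step is (b): one must keep the near-minimizers in a fixed compact set so a subsequential limit exists, and it is the closedness of each $F_m$ that pins this limit inside $\bigcap_m F_m$. The subtler point is that in the second case one should work throughout with $\ol D_n$ and $(\ol D_n)^c$ rather than $D_n$ and $D_n^c$ — the latter need not be monotone — which forces the use of the identity $\phi_D(\cdot)=\dist\bigl(\cdot,(\ol D)^c\bigr)-\dist(\cdot,\ol D)$ and hence of the standing regularity of the domains ($D=\Int\ol D$, equivalently $\pa D=\pa\ol D$); without such regularity the assertion actually fails, e.g.\ a fixed closed ball is the decreasing limit of the closures of balls with a wandering interior point deleted, while the corresponding open sets do not converge weakly.
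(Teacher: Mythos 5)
Your proof is correct, and it takes a genuinely different route from the paper's. The paper argues pointwise and directly (and only writes out the increasing case, calling the other one ``similar''): for $D_n \uparrow D_0$ it splits into $x \in D_0$ and $x \notin D_0$, uses that $\phi_n(x)$ is monotone in $n$ and bounded by $\phi_0(x)$, and then gets the matching bound via a finite-subcover argument for a closed ball $\ol{U(x, r-\eps)} \subseteq D_0 = \cup_n D_n$ in the first case, and via approximating the nearest point of $\pa D_0$ by a point of $\cup_n D_n$ in the second. You instead decompose the signed distance as a difference of two ordinary distance functions and reduce the whole lemma to two monotone-convergence facts for $\dist(x,\cdot)$, the only nontrivial one being the decreasing-closed-sets case, which you settle correctly by the bounded-minimizer/compactness argument. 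This is more modular, and it meshes naturally with Lemma~\ref{lemma:equiv}(ii) (weak convergence of domains $=$ Wijsman convergence of $D_n$ and $D_n^c$); the paper's argument is more hands-on but essentially self-contained in the same way.

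Your closing observation is substantive and correct: when only the closures are monotone, the complements $D_n^c$ need not be, so you are forced onto the identity $\phi_{D}(\cdot)=\dist\bigl(\cdot,(\ol{D})^c\bigr)-\dist(\cdot,\ol{D})$, which does require $D=\Int\ol{D}$ (equivalently $\pa D=\pa\ol{D}$), and your punctured-ball example shows the decreasing half of the statement is literally false for arbitrary open connected domains. The paper's omitted ``similar'' proof would need the same regularity (e.g.\ to know $D_0 \subseteq D_n$ and $\pa D_0 = \pa \ol{D}_0$); this holds for the domains the paper actually works with but is not stated in the lemma, so in your write-up simply make the hypothesis $D_n=\Int\ol{D}_n$ for all $n \ge 0$ explicit in the second case. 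Note also that your auxiliary fact (b) needs the sets nonempty, which is automatic here since all domains are proper subsets of $\BR^d$ (otherwise $\phi_0$ is not even defined); the increasing case needs no regularity at all, in agreement with the paper.
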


The following lemma provides comparison of convergence modes. 

\begin{lemma} (i) Weak convergence $D_n \Ra D_0$ is stronger than Wijsman convergence.

(ii) Weak convergence $D_n \Ra D_0$ is weaker than Hausdorff convergence. 

(iii) $D_n \to D_0$ in Hausdorff topology if and only if $\phi_n(x) \to \phi_0(x)$ uniformly on the whole $\BR^d$. 
\label{lemma:comparison-conv}
\end{lemma}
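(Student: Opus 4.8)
The plan is to reduce all three parts to elementary manipulations with the signed distance functions $\phi_n$, using Lemma~\ref{lemma:equiv} together with the standard identity
\[
\sup_{x \in \BR^d}\bigl|\dist(x, A) - \dist(x, B)\bigr| = d_H(A, B) \qquad (A, B \subseteq \BR^d \text{ closed and nonempty}),
\]
which expresses that $d_H$ metrizes uniform (Hausdorff) convergence of closed sets; this is proved in two lines (the bound $\le$ from the triangle inequality, the reverse by testing $x \in A$ and $x \in B$). The workhorse for part~(iii) is the decomposition $\phi_n = \phi_n^+ - \phi_n^-$, where $\phi_n^+ := \max(\phi_n, 0) = \dist(\cdot, \ol{D_n^c})$ and $\phi_n^- := \max(-\phi_n, 0) = \dist(\cdot, \ol{D_n})$, which I would verify by checking the three cases $x \in D_n$, $x \in \ol{D_n}^c$, and $x \in \pa D_n$.

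For part~(iii): since $t \mapsto t^{+}$ and $t \mapsto t^{-}$ are $1$-Lipschitz, uniform convergence $\phi_n \to \phi_0$ on $\BR^d$ is equivalent to the simultaneous uniform convergence $\phi_n^+ \to \phi_0^+$ and $\phi_n^- \to \phi_0^-$, that is, of $\dist(\cdot, \ol{D_n^c}) \to \dist(\cdot, \ol{D_0^c})$ and $\dist(\cdot, \ol{D_n}) \to \dist(\cdot, \ol{D_0})$ uniformly. By the displayed identity this is exactly $d_H(\ol{D_n^c}, \ol{D_0^c}) \to 0$ together with $d_H(\ol{D_n}, \ol{D_0}) \to 0$, which is the meaning of $D_n \to D_0$ in the Hausdorff topology for domains (the natural analogue of Lemma~\ref{lemma:equiv}(ii)). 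Part~(ii) then follows at once: by (iii), Hausdorff convergence yields $\phi_n \to \phi_0$ uniformly, hence pointwise, i.e.\ $D_n \Ra D_0$ by Definition~\ref{def:conv-domain}; that the implication is strict is witnessed by $D_n := \BR^2 \setminus \bigl([0, n] \times \{0\}\bigr)$ and $D_0 := \BR^2 \setminus \bigl([0, \infty) \times \{0\}\bigr)$, where $\ol{D_n} = \BR^2 = \ol{D_0}$ so Lemma~\ref{lemma:monotone} gives $D_n \Ra D_0$, while $\phi_n(n + 1, 0) = 1$ for every $n$ and $\phi_0(n + 1, 0) = 0$, so the convergence is not uniform and, by (iii), not Hausdorff.

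For part~(i): Lemma~\ref{lemma:equiv}, (i)$\Leftrightarrow$(ii), says $D_n \Ra D_0$ holds iff $D_n \to D_0$ and $D_n^c \to D_0^c$ both in Wijsman topology; in particular it implies $D_n \to D_0$ in Wijsman topology. Strictness is witnessed by $D_n := U(0, 1) \setminus \{p_n\} \subseteq \BR^2$ with $p_n = 0$ for odd $n$ and $p_n = \tfrac12 e_1$ for even $n$: removing a point leaves $\dist(\cdot, D_n) \equiv \dist(\cdot, U(0,1))$ unchanged, so $D_n \to U(0, 1)$ in Wijsman topology, whereas $\dist(0, D_n^c)$ equals $0$ for odd $n$ and $\tfrac12$ for even $n$, so $D_n^c$ has no Wijsman limit and hence, again by Lemma~\ref{lemma:equiv}, $(D_n)$ has no weak limit. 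One also checks in both examples that the $D_n$ and $D_0$ are genuine domains (open and connected in $\BR^2$).

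The only delicate point — and the one I would settle before anything else — is pinning down precisely what ``$D_n \to D_0$ in the Hausdorff topology'' means for domains: the equivalence in (iii) forces it to involve the complements $\ol{D_n^c}$ and not merely the closures $\ol{D_n}$ (otherwise $D_n = U(0,1)\setminus\{0\} \to U(0,1)$ would be a Hausdorff-convergent sequence failing $\phi_n \to \phi_0$). With that definition fixed, the remaining work — the three-case verification of $\phi_n = \phi_n^+ - \phi_n^-$, the empty-set conventions (handled separately in Section~5), and the distance computations in the two examples — is routine.
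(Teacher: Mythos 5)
Your proposal is correct and, at its core, parallels the paper's argument: part (i) is read off from Lemma~\ref{lemma:equiv} (i)$\Leftrightarrow$(ii), part (iii) rests on splitting $\phi_n$ into its positive and negative parts ($\dist(\cdot,\ol{D_n^c})$ and $\dist(\cdot,\ol{D}_n)$, which the paper writes as $(\phi_n)_+$ and $(\phi_n)_-$), and strictness is handled by counterexamples (the paper uses a half-plane with a shrinking slit for (i) and a rotated quadrant for (ii); your punctured-disc and slit-plane examples play the same role). The genuine difference is exactly the point you flag as delicate: the paper's proofs of (ii) and of the converse direction of (iii) pass from Hausdorff convergence of $D_n$ to Hausdorff convergence of $D_n^c$ as if this were automatic, and under the paper's literal definition (uniform convergence of $\dist(\cdot,D_n)$, equivalently $d_H(\ol{D}_n,\ol{D}_0)\to 0$) it is not: your own observation that $D_n=U(0,1)\setminus\{0\}$ converges to $U(0,1)$ in that sense while the complements do not shows the gap. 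You resolve it by building the complement condition into what ``Hausdorff convergence of domains'' means and then proving (iii) cleanly via the identity $d_H(A,B)=\sup_x\left|\dist(x,A)-\dist(x,B)\right|$ together with the $1$-Lipschitz maps $t\mapsto t^+$, $t\mapsto t^-$; this buys a version of (iii) that is literally an equivalence and an airtight (ii), at the price of a convention the paper never states, whereas the paper's route is shorter but its complement step is unjustified as written. One consequential caveat: your slit-plane example for strictness in (ii) works only under your two-sided convention (all closures there equal $\BR^2$, so one-sided Hausdorff convergence holds trivially); if one insists on the paper's literal definition, substitute the paper's example of $D_0=\{x\in\BR^2\mid x_1>0,\ x_2>0\}$ and $D_n$ its rotation by an angle $\al_n\to 0$, which witnesses strictness under either reading, since the discrepancy of the distance functions grows linearly far from the origin.
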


\begin{exm} Fix $d \ge 2$, the dimension. Let $e_1 = (1, 0, \ldots, 0)' \in \BR^d$. Consider a sequence $D_n := U(ne_1, n)$ of open balls of radius $n$ centered at $ne_1$. This is an increasing sequence: $D_n \subseteq D_{n+1}$. It is easy to see that $D_n \uparrow D_0 = \{x \in \BR^d\mid x_1 > 0\}$. By Lemma~\ref{lemma:monotone}, $D_n \Ra D_0$.
\end{exm}

\begin{exm} Take a sequence $D_n = U(x_n, a_n)$ of open discs in $\BR^d$. Then $D_n \Ra D_0$ if and only if $x_n \to x_0$ and $a_n \to a_0$. Indeed, 
$\phi_n(x) \equiv a_n - \norm{x - x_n}$, so the ``if'' part is obvious. Let us show the ``only if'' part. Assume $D_n \Ra D_0$. Take an arbitrarily small $\eps > 0$. Then by Corollary~\ref{cor:simple}, for $\CK = \ol{U(x_0, a_0 - \eps)} \subseteq D_0$, there exists $n_0$ such that for $n > n_0$ we have: $\CK \subseteq D_n = U(x_n, a_n)$. But if $\ol{U(y_1, a_1)} \subseteq U(y_2, a_2)$, then $a_1 < a_2$ and $\norm{y_1 - y_2} \le a_2 - a_1$. Therefore, 
\begin{equation}
\label{eq:discintodisc}
a_0 - \eps < a_n\ \ \mbox{and}\ \ \norm{x_n - x_0} \le a_n - a_0 + \eps\ \ \mbox{for}\ \ n > n_0.
\end{equation}
We can take arbitrarily small $\eps > 0$. From the first comparison in~\eqref{eq:discintodisc}, 
\begin{equation}
\label{eq:limsup}
\varliminf\limits_{n \to \infty}a_n \ge a_0.
\end{equation}
Similarly, taking $\CK = U(0, N)\setminus U(x_0, a_0 + \eps)$ for large $N$ and small $\eps > 0$, we conclude: $\CK \subseteq \ol{D}_0^c$, and so $\CK \subseteq \ol{D}_n^c$ for large enough $n$. Therefore, $a_n \le a_0 + \eps$. This leads to the conclusion that 
\begin{equation}
\label{eq:liminf}
\varlimsup\limits_{n \to \infty}a_n \le a_0.
\end{equation}
Combining~\eqref{eq:limsup} and~\eqref{eq:liminf}, we get: $a_n \to a_0$. Now, from the second comparison in~\eqref{eq:discintodisc} we have: because $a_n \to a_0$ and $\eps > 0$ is arbitrarily small, $x_n \to x_0$. 
\end{exm}

\begin{exm} Take a sequence $(f_n)_{n \ge 1}$ of smooth functions $\BR_+^{d-1} \to \BR$ such that $f_n \to 0$ locally uniformly and $f_n(0) = 0$. For $i = 1, \ldots, d$ and $x = (x_1, \ldots, x_d)'$, we let
$$
\hat{x}_i = (x_1, \ldots, x_{i-1}, x_{i+1}, \ldots, x_d)' \in \BR^{d-1}.
$$
Now, define the following sequence of domains:
$$
D_n = \{x \in \BR^d\mid x_i > f_n(\hat{x}_i),\ i = 1, \ldots, d\}.
$$
Then $D_n \Ra D_0 = (0, \infty)^d$. The proof is similar to that of Theorem~\ref{thm:SRBM} (i), (ii) below.
\end{exm}

\subsection{Main result} Consider a sequence $(D_n)_{n \ge 1}$ of domains in $\BR^d$. Let $\CV_n$ be non-smooth parts of the boundary for $D_n$. For each $n = 0, 1, 2, \ldots$ take a reflected diffusion $Z_n$ in $\ol{D}_n$ with drift vector $g_n$, covariance matrix $A_n$, and reflection field $r_n$, starting from $z_n = Z_n(0)$. Suppose that for every $n = 0, 1, 2, \ldots$, this reflected diffusion $Z_n$ satisfies Assumptions 1-3.

The main question of this paper is:

\begin{center}
Under what assumptions on $g_n, A_n, r_n, z_n, D_n$, do we have: 
$$
Z_n \Ra Z_0\ \ \mbox{weakly in}\ \ C([0, T], \BR^d)\ \ \mbox{for every}\ \ T > 0\, ?
$$
\end{center}

First, we need the domains $D_n$ to converge to $D_0$ in some sense. We already defined an appropriate concept of weak convergence earlier. We also need to have
$$
g_n \to g_0,\ \ A_n \to A_0,\ \ r_n \to r_0
$$
uniformly in some sense. But these functions are defined on different subsets of $\BR^d$. A natural way to define convergence is as follows.

\begin{defn} Take functions $f_n : E_n \to \BR^p$, $n = 0, 1, 2, \ldots$ where $E_n \subseteq \BR^d$, and $p \ge 1$ is some dimension. We say that $f_n \to f_0$ {\it locally uniformly}, and write $f_n \Ra f_0$, if one of these two equivalent statements is true:

(i) for every subsequence $(n_k)_{k \ge 1}$ and any sequence $(z_{n_k})_{k \ge 1}$ such that $z_{n_k} \in E_{n_k}$ and $z_{n_k} \to z_0 \in E_0$ we have: $f_{n_k}(z_{n_k}) \to f_0(z_0)$;

(ii) for every $T > 0$, and for every subsequence $(n_k)_{k \ge 1}$ and any sequence $(x_{n_k})_{k \ge 1}$ of continuous functions $[0, T] \to \BR^d$ such that $x_n(t) \in E_n$ for all $n = 0, 1, \ldots$ we have: 
$$
\mbox{if}\ \ x_{n_k}(t) \to x_0(t)\ \ \mbox{uniformly on}\ \ [0, T],
$$
$$
\mbox{then}\ \ f_{n_k}(x_{n_k}(t)) \to f_0(x_0(t))\ \ \mbox{uniformly on}\ \ [0, T].
$$
\label{defn:uniform-conv}
\end{defn}

\begin{lemma} These two definitions (i) and (ii) of locally uniform convergence are indeed equivalent, if $f_0$ is continuous on $E_0$. 
\label{lemma:loc-unif-conv}
\end{lemma}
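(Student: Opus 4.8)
The plan is to prove the two implications separately. The direction (ii)$\Rightarrow$(i) is essentially immediate, while (i)$\Rightarrow$(ii) requires a compactness argument on $[0,T]$ together with the continuity of $f_0$, which is exactly where that hypothesis enters.

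For (ii)$\Rightarrow$(i), suppose we are given a subsequence $(n_k)_{k\ge1}$ and points $z_{n_k}\in E_{n_k}$ with $z_{n_k}\to z_0\in E_0$. I would feed Definition~\ref{defn:uniform-conv}(ii), with $[0,T]:=[0,1]$, the constant functions $x_{n_k}(t):=z_{n_k}$ and $x_0(t):=z_0$ (the values at indices outside the subsequence are immaterial and can be taken to be an arbitrary constant in the corresponding $E_n$). These are continuous, take values in the respective sets, and $x_{n_k}\to x_0$ uniformly since $\sup_{t}\norm{x_{n_k}(t)-x_0(t)}=\norm{z_{n_k}-z_0}\to0$. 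Hence (ii) gives $f_{n_k}(x_{n_k}(\cdot))\to f_0(x_0(\cdot))$ uniformly on $[0,1]$, and evaluating at $t=0$ yields $f_{n_k}(z_{n_k})\to f_0(z_0)$. Note this direction uses neither compactness nor continuity of $f_0$.

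For (i)$\Rightarrow$(ii), I would argue by contradiction. If (ii) fails, there are $T>0$, a subsequence $(n_k)$, continuous functions $x_{n_k}:[0,T]\to\BR^d$ with $x_{n_k}(t)\in E_{n_k}$, and a continuous $x_0:[0,T]\to\BR^d$ with $x_0(t)\in E_0$, such that $x_{n_k}\to x_0$ uniformly but $\sup_{t\in[0,T]}\norm{f_{n_k}(x_{n_k}(t))-f_0(x_0(t))}\not\to0$. Passing to a further subsequence, there exist $\eps>0$ and $t_k\in[0,T]$ with $\norm{f_{n_k}(x_{n_k}(t_k))-f_0(x_0(t_k))}\ge\eps$ for all $k$. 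By compactness of $[0,T]$, pass once more to a subsequence with $t_k\to t_*\in[0,T]$. Then $x_0(t_k)\to x_0(t_*)$ by continuity of $x_0$, and $x_{n_k}(t_k)\to x_0(t_*)$ since $\norm{x_{n_k}(t_k)-x_0(t_*)}\le\sup_{[0,T]}\norm{x_{n_k}-x_0}+\norm{x_0(t_k)-x_0(t_*)}\to0$. Applying Definition~\ref{defn:uniform-conv}(i) to $z_{n_k}:=x_{n_k}(t_k)\in E_{n_k}$ and $z_0:=x_0(t_*)\in E_0$ gives $f_{n_k}(x_{n_k}(t_k))\to f_0(x_0(t_*))$, while continuity of $f_0$ (composed with the continuous $x_0$) gives $f_0(x_0(t_k))\to f_0(x_0(t_*))$. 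Subtracting, $\norm{f_{n_k}(x_{n_k}(t_k))-f_0(x_0(t_k))}\to0$, contradicting the bound $\eps$.

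The only delicate points are the nested-subsequence bookkeeping in the second implication and checking that the constant-function construction is admissible under the (slightly informally phrased) hypothesis of Definition~\ref{defn:uniform-conv}(ii); neither is a genuine obstacle. The substantive role of the standing hypothesis is that continuity of $f_0$ is invoked exactly once, in the final step of (i)$\Rightarrow$(ii), which explains why it cannot be dropped.
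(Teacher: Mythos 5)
Your proof is correct and follows essentially the same route as the paper: the direction (ii)$\Rightarrow$(i) via constant functions, and (i)$\Rightarrow$(ii) by contradiction, extracting a convergent subsequence of times $t_k \to t_*$ in the compact interval $[0,T]$ and using continuity of $x_0$ and of $f_0$ to reach the contradiction. The only cosmetic difference is your more explicit bookkeeping of nested subsequences and of the constant-function construction, which the paper leaves implicit.
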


The proof of Lemma~\ref{lemma:loc-unif-conv} is postponed until Appendix. 

\begin{rmk} In the case $E_n = E_0$, if the function $f_0$ is continuous, then $f_n \Ra f_0$ is equivalent to the locally uniform convergence on $E_0$ in the usual sense (that is, uniform convergence on $E_0\cap\CK$ for every compact set $\CK \subseteq \BR^d$). 
\end{rmk}

\begin{rmk} Note that $A_n \Ra A_0$ if and only if $\si_n(x) := A_n^{1/2}(x) \Ra \si_0(x) := A_0^{1/2}(x)$. The ``if'' part follows from the obvious fact that the operation of taking the square of a matrix is continuous. The ``only if'' part follows from the fact that the operation of taking a symmetric positive definite square root of a symmetric positive definite matrix is also continuous, see for example \cite{HornBook}. 
\label{rmk:sqrt}
\end{rmk}

Now comes the main result of this paper. 

\begin{thm}
\label{thm:main}
Take $Z_n$ for $n = 0, 1, 2, \ldots$ as described above. Assume each $Z_n,\ n = 0, 1, 2, \ldots$ satisfies Assumptions 1-3. Suppose that $g_0, A_0, r_0$ are locally bounded, and $Z_0$ does not hit non-smooth parts of the boundary. Assume that
$$
D_n \Ra D_0,\ \ g_n \Ra g_0,\ \ A_n \Ra A_0,\ \ r_n \Ra r_0,\ \ z_n \to z_0.
$$
Also, assume that at least one of the following conditions (a) or (b) holds true: 

(a) for all $n \ge n_0$, the process $Z_n$ does not hit non-smooth parts $\CV_n$ of the boundary $\pa D_n$;

(b) for every compact set $\CK \subseteq \BR^d$, we have:
\begin{equation}
\label{eq:condition-b}
\lim\limits_{n \to \infty}\max\limits_{x \in \CV_n\cap\CK}\dist(x, \CV_0) = 0. 
\end{equation}
Then $Z_n \Ra Z_0$ weakly in $C([0, T], \BR^d)$ for every $T > 0$.
\end{thm}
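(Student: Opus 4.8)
The plan is the classical three-step scheme for weak convergence of solutions of (reflected) SDEs: (1) prove tightness of $(Z_n)$ together with the auxiliary processes — the driving Brownian motions $W_n$, the local times $l_n$, and the reflection terms $L_n$; (2) identify every subsequential weak limit as a reflected diffusion in $\ol{D}_0$ with data $(g_0, A_0, r_0)$ started at $z_0$ in the sense of Definition~\ref{defn:main}; (3) invoke uniqueness in law (Assumption 3), together with the hypothesis that $Z_0$ avoids $\CV_0$, to upgrade subsequential convergence to $Z_n \Ra Z_0$. Everything is done inside a double localization: fix $T > 0$; given $\eps > 0$ choose $M$ with $\MP(\sup_{[0,T]}\norm{Z_0(t)} \ge M-1) < \eps$ and $\de > 0$ with $\MP(\inf_{[0,T]}\dist(Z_0(t),\CV_0) \le 2\de) < \eps$ (possible since $\CV_0$ is closed and $Z_0$ avoids it), and set $\tau_n := \inf\{t : \norm{Z_n(t)} \ge M \text{ or } \dist(Z_n(t),\CV_0) \le \de\}$. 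Under (b), \eqref{eq:condition-b} gives $\CV_n \subseteq U_\de(\CV_0)$ inside $\ol{U}(0,M)$ for large $n$, so $\tau_n \le \tau_{\CV_n}$ and the stopping at $\CV_n$ is irrelevant on $[0,\tau_n]$; under (a) this is automatic. On the region $\{\norm{x}\le M,\ \dist(x,\CV_0)\ge\de\}$ the boundary $\pa D_n$ is $C^2$, and by $g_n\Ra g_0$, $A_n\Ra A_0$ (and Remark~\ref{rmk:sqrt}), $r_n\Ra r_0$ in the sense of Definition~\ref{defn:uniform-conv}, together with $D_n\Ra D_0$ (Corollary~\ref{cor:simple}) and local boundedness of $g_0,A_0,r_0$, the coefficients $g_n,A_n,\si_n,r_n$ are uniformly bounded there for $n$ large. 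It suffices to prove $Z_n(\cdot\wedge\tau_n)\Ra Z_0(\cdot\wedge\tau_0)$ for each such $M,\de$ and then let $\de\downarrow 0$, $M\uparrow\infty$.

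For tightness the only nontrivial ingredient is a uniform bound on $l_n(T\wedge\tau_n)$. Since $\pa D_0\setminus\CV_0$ is $C^2$ and $r_0\cdot\fn_0\equiv 1$, there is a $C^2$ function $h$ with bounded derivatives on a neighborhood of $\{\norm{x}\le M,\ \dist(x,\CV_0)\ge\de/2\}\cap\ol{D}_0$ satisfying $\nabla h\cdot r_0\ge c_0>0$ on the boundary; by $D_n\Ra D_0$ and $r_n\Ra r_0$ the same $h$ satisfies $\nabla h\cdot r_n\ge c_0/2$ on $\pa D_n$ in this region for $n$ large. Applying Itô's formula to $h(Z_n(\cdot\wedge\tau_n))$ and rearranging bounds $c_0\, l_n(T\wedge\tau_n)$ by uniformly bounded finite-variation terms plus a martingale, giving $\sup_n\ME[l_n(T\wedge\tau_n)]<\infty$. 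Tightness of $(l_n(\cdot\wedge\tau_n))$ follows (nondecreasing processes with uniformly controlled increments), hence of $(L_n(\cdot\wedge\tau_n))$, and then of $(Z_n(\cdot\wedge\tau_n))$ via the decomposition \eqref{eq:def-main} and the Aldous criterion; the $W_n$ are tight trivially.

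For the identification, pass to a subsequence and use the Skorokhod representation theorem to realize, on one probability space, $(Z_n,W_n,l_n,L_n)(\cdot\wedge\tau_n)\to(Z,W,l,L)$ almost surely and uniformly on $[0,T]$, with $W$ a Brownian motion in the limiting filtration (by the usual martingale-characterization argument). Then $Z(t)\in\ol{D}_0$ (Corollary~\ref{cor:simple}); the drift integrals converge, and since $\si_n(Z_n)\to\si_0(Z)$ uniformly so do the stochastic integrals, whence $L(t)=Z(t)-z_0-\int_0^t g_0(Z)\,\dd s-\int_0^t\si_0(Z)\,\dd W$ has finite variation ($\Var(L)\le\liminf l_n(T\wedge\tau_n)$). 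A covering argument over the open set $\{t:Z(t)\in D_0\}$ — on each subinterval where $Z$ stays in a compact subset of $D_0$, the $Z_n$ eventually do too, so $\dd l_n$, hence $\dd l$, charges nothing — shows $\dd l$ is carried by $\{t:Z(t)\in\pa D_0\}$. Finally $K_0:=\{Z(t):t\le T,\ Z(t)\in\pa D_0\}$ is a compact subset of $\pa D_0\setminus\CV_0$, lying at distance $\ge\de$ from $\CV_0$ by construction; on a neighborhood of $K_0$ one extends $r_0$ continuously, the nearest-point extensions of $r_n$ off $\pa D_n$ converge to it locally uniformly (using $\pa D_n\to\pa D_0$ and $r_n\Ra r_0$), so $r_n(Z_n(s))\to r_0(Z(s))$ uniformly on the eventual support of $\dd l_n$, and $L=\int_0^\cdot r_0(Z(s))\,\dd l(s)$. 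Thus $(Z,W,l,L)$ solves \eqref{eq:def-main} with parameters $g_0,A_0,r_0$ starting from $z_0$, up to $\tau^\ast:=T\wedge\inf\{t:\norm{Z(t)}\ge M\text{ or }\dist(Z(t),\CV_0)\le\de\}$, and $Z$ does not meet $\CV_0$ before $\tau^\ast$. By Assumption 3 this pins down the law of $Z(\cdot\wedge\tau^\ast)$, which must coincide with that of $Z_0(\cdot\wedge\tau_0)$; since the subsequence was arbitrary, $Z_n(\cdot\wedge\tau_n)\Ra Z_0(\cdot\wedge\tau_0)$, and letting $\de\downarrow 0$, $M\uparrow\infty$ completes the proof.

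I expect the main obstacles to be: (i) producing a single test function $h$ valid simultaneously for all $D_n$ near $D_0$, which drives the uniform local-time bound; and (ii) the identification $L=\int r_0(Z)\,\dd l$, i.e.\ comparing reflection fields that live only on the moving hypersurfaces $\pa D_n\setminus\CV_n$ and checking that exactly condition (a) or (b) suffices to keep the limit path away from $\CV_0$ so that uniqueness in law can be invoked. It is in step (ii) that the hypotheses on $\CV_n$ and the strength of weak convergence of domains (beyond mere Wijsman convergence, via Corollary~\ref{cor:simple} and Lemma~\ref{lemma:equiv}) are really used.
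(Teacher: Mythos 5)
Your proposal is correct and follows essentially the same route as the paper: localization to a compact set away from $\CV_0$ (with exactly the paper's use of (a)/(b) to discard stopping at $\CV_n$), an It\^o-formula bound on the local time via a $C^2$ test function transverse to all the $r_n$ near $\pa D_n$ (the paper's choice is $h = \psi\circ\phi_0$, a cutoff of the signed distance to $\pa D_0$, with $\nabla\phi_0\cdot r_n \ge \eps_0$), Skorokhod representation plus Kurtz--Protter convergence of stochastic integrals for the identification, the nearest-point projection onto $\pa D_0$ to identify $\int r_0(\ol Z)\,\md\ol l$, and uniqueness in law plus avoidance of $\CV_0$ to conclude via the localization lemma. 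No essential differences or gaps.
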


The following is a necessary and sufficient condition for~\eqref{eq:condition-b}. 

\begin{lemma} Condition (b) from Theorem~\ref{thm:main} holds if and only if for every sequence $(x_{n_k})_{k \ge 1}$ with $x_{n_k} \in \CV_{n_k}$ and $x_{n_k} \to x_0$ we have: $x_0 \in \CV_0$. In particular, we can apply Lemma~\ref{lemma:Wijsman} (i) and conclude: condition (b) holds if $\CV_n \to \CV_0$ in Wijsman topology. 
\label{lemma:conv-of-non-smooth}
\end{lemma}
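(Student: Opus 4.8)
The plan is to establish the equivalence by a routine subsequence-and-compactness argument, relying on two facts implicit in the setup: each $\CV_n$ is a \emph{closed} subset of $\pa D_n$ (hence closed in $\BR^d$), and the map $x \mapsto \dist(x, \CV_0)$ is $1$-Lipschitz, hence continuous.

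\textbf{The subsequential criterion implies (b).} I would argue by contraposition. Suppose (b) fails: there are a compact $\CK \subseteq \BR^d$, a number $\eps > 0$, and a subsequence $(n_k)$ with $\sup_{x \in \CV_{n_k} \cap \CK} \dist(x, \CV_0) > \eps$; in particular $\CV_{n_k} \cap \CK \ne \varnothing$, so one can pick $x_{n_k} \in \CV_{n_k} \cap \CK$ with $\dist(x_{n_k}, \CV_0) > \eps$. Since $\CK$ is compact, after passing to a further subsequence $x_{n_k} \to x_0 \in \CK$. The criterion then forces $x_0 \in \CV_0$, i.e.\ $\dist(x_0, \CV_0) = 0$; but continuity of the distance function gives $\dist(x_0, \CV_0) = \lim_k \dist(x_{n_k}, \CV_0) \ge \eps$, a contradiction.

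\textbf{(b) implies the subsequential criterion.} Assume (b), and let $x_{n_k} \in \CV_{n_k}$ with $x_{n_k} \to x_0$. The set $\CK := \{x_0\} \cup \{x_{n_k} : k \ge 1\}$ is compact, and since $x_{n_k} \in \CV_{n_k} \cap \CK$ we have $\dist(x_{n_k}, \CV_0) \le \max_{x \in \CV_{n_k} \cap \CK} \dist(x, \CV_0)$; the right-hand side tends to $0$ by (b) applied to $\CK$ (a limit along all $n$ descends to the subsequence $(n_k)$). Hence $\dist(x_{n_k}, \CV_0) \to 0$, and continuity together with $x_{n_k} \to x_0$ yields $\dist(x_0, \CV_0) = 0$; since $\CV_0$ is closed, $x_0 \in \CV_0$. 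Finally, the ``in particular'' claim follows because Wijsman convergence $\CV_n \to \CV_0$ is inherited by every subsequence, so Lemma~\ref{lemma:Wijsman}(ii) applied to $(\CV_{n_k})$ and the points $x_{n_k} \in \CV_{n_k}$ gives $x_0 \in \ol{\CV}_0 = \CV_0$, i.e.\ the criterion holds.

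There is no substantial obstacle; the only points needing care are the bookkeeping with subsequences (using that pointwise and Wijsman convergence along $n$ pass to subsequences), the convention $\max_\varnothing = 0$ for the quantity in (b) when $\CV_n \cap \CK = \varnothing$, and the degenerate case $\CV_0 = \varnothing$ (where $\dist(\cdot, \varnothing) = +\infty$): then (b) says $\CV_n \cap \CK = \varnothing$ eventually for every compact $\CK$, while the criterion says no sequence $x_{n_k} \in \CV_{n_k}$ has a convergent subsequence, and these are again equivalent by the same compactness argument.
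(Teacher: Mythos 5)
Your proof is correct and follows essentially the same route as the paper: the same compactness-and-contradiction argument for showing the subsequential criterion implies (b), and the same continuity-of-the-distance-function argument (plus closedness of $\CV_0$) for the converse. Your citation of Lemma~\ref{lemma:Wijsman}(ii) for the Wijsman ``in particular'' claim is in fact the natural one (the lemma statement's reference to part (i) reads like a slip), and your remarks on the degenerate cases $\CV_n\cap\CK=\varnothing$ and $\CV_0=\varnothing$ are harmless extra care.
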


If all domains $D_0, D_1, D_2, \ldots$ are the same, then we can restate this main result as follows. 

\begin{cor}
\label{cor:same-domain}
Assume $D_n = D$ for $n = 0, 1, 2, \ldots$, where $D$ has non-smooth parts of the boundary $\CV$. Suppose $r_n \to r_0$ locally uniformly on $\pa D\setminus\CV$, 
and $g_n \to g_0$, $\si_n \to \si_0$ locally uniformly on $\ol{D}\setminus\CV$. Assume $z_n \to z_0$. Finally, assume $Z_0$ does not hit $\CV$. Then 
$$
Z_n \Ra Z_0\ \ \mbox{weakly in}\ \ C([0, T], \BR^d)\ \ \mbox{for every}\ \ T > 0\,.
$$
\end{cor}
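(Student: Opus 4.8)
The plan is to derive Corollary~\ref{cor:same-domain} as a direct specialization of Theorem~\ref{thm:main}, so the bulk of the work is verifying that the hypotheses of the Corollary imply those of the Theorem. First I would observe that with $D_n \equiv D$ we trivially have $D_n \Ra D_0$: the signed distance functions $\phi_n$ coincide, so $\phi_n(x) \to \phi_0(x)$ pointwise is immediate. Likewise $\CV_n \equiv \CV$, so $\CV_n \to \CV_0$ in Wijsman topology holds trivially, and by Lemma~\ref{lemma:conv-of-non-smooth} condition (b) of Theorem~\ref{thm:main} is satisfied. This already disposes of the domain-convergence and exceptional-set conditions with essentially no computation.

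The remaining point is to reconcile the two notions of convergence of the coefficients. The Corollary hypothesizes $r_n \to r_0$ locally uniformly on $\pa D \setminus \CV$ in the \emph{classical} sense (uniform convergence on $(\pa D\setminus\CV)\cap\CK$ for every compact $\CK$), and similarly $g_n \to g_0$, $\si_n \to \si_0$ locally uniformly on $\ol{D}\setminus\CV$; whereas Theorem~\ref{thm:main} requires $g_n \Ra g_0$, $A_n \Ra A_0$, $r_n \Ra r_0$ in the sense of Definition~\ref{defn:uniform-conv}. Here I would invoke the Remark following Lemma~\ref{lemma:loc-unif-conv}: when all the $E_n$ equal a fixed set $E_0$ and $f_0$ is continuous, the arrow convergence $f_n \Ra f_0$ is \emph{equivalent} to classical locally uniform convergence on $E_0$. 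Applying this with $E_0 = \pa D\setminus\CV$ for $r$ and with $E_0 = \ol{D}\setminus\CV$ for $g$ and $\si$ gives $r_n \Ra r_0$, $g_n \Ra g_0$, $\si_n \Ra \si_0$. For the covariance fields I would then cite Remark~\ref{rmk:sqrt} to pass from $\si_n \Ra \si_0$ back to $A_n \Ra A_0$. One small bookkeeping item: the continuity of $r_0$, $g_0$, $\si_0$ needed to invoke that Remark follows because each is a locally uniform limit of continuous functions (the $r_n$ are continuous by Assumption 2, and $g_0$, $\si_0$ are likewise limits of the coefficient fields), and local boundedness of $g_0, A_0, r_0$ — also required by the Theorem — follows from continuity on the relevant closed-minus-exceptional set, or is simply inherited.

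Finally, the hypothesis $z_n \to z_0$ is assumed outright, and the hypotheses ``$Z_0$ does not hit $\CV$'' together with ``each $Z_n$ satisfies Assumptions 1--3'' are carried over verbatim (the latter being part of the standing setup of the Corollary once we note $D$ has exceptional set $\CV$). For condition (a)-or-(b) of the Theorem we use (b), already verified above. Hence all hypotheses of Theorem~\ref{thm:main} hold, and its conclusion $Z_n \Ra Z_0$ weakly in $C([0,T],\BR^d)$ for every $T>0$ is exactly the claim.

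I do not expect a genuine obstacle here, since the Corollary is strictly weaker than the Theorem; the only place demanding care is the equivalence of the two convergence notions for the coefficient fields, and that is precisely what the Remark after Lemma~\ref{lemma:loc-unif-conv} and Remark~\ref{rmk:sqrt} are there to supply. If anything, the subtlety is making sure the ``common domain'' $E_0$ for the arrow-convergence of $r_n$ is taken to be $\pa D\setminus\CV$ (not all of $\pa D$), matching the domain of definition of the reflection fields, and that the continuity hypothesis in the cited Remark is checked before it is used.
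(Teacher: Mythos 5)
Your proposal is correct and is exactly the intended argument: the paper states this Corollary as an immediate specialization of Theorem~\ref{thm:main}, with $D_n \equiv D$ giving $D_n \Ra D_0$ and condition (b) trivially, and the Remark after Lemma~\ref{lemma:loc-unif-conv} together with Remark~\ref{rmk:sqrt} translating the classical locally uniform convergence of $r_n, g_n, \si_n$ into the hypotheses $r_n \Ra r_0$, $g_n \Ra g_0$, $A_n \Ra A_0$. Your extra care about taking $E_0 = \pa D\setminus\CV$ (resp. $\ol{D}\setminus\CV$) and checking continuity of the limit fields is consistent with what the paper leaves implicit.
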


\section{Semimartingale Reflected Brownian Motion in a Convex Polyhedron}

\subsection{Definitions} An open convex polyhedron $D$ is defined as follows. Fix $m \ge 1$, the number of edges. Let $\fn_1, \ldots, \fn_m \in \BR^d$ be unit vectors, and let $b_1, \ldots, b_m \in \BR$ be real numbers. The domain $D$ is defined as
\begin{equation}
\label{eq:def-D}
D = \{x \in \BR^d\mid x\cdot\fn_i > b_i,\ \ i = 1, \ldots, m\}
\end{equation}
We assume that $D \ne \varnothing$, and for each $j = 1, \ldots, m$, we have:
$$
\{x \in \BR^d\mid x\cdot \fn_i > b_i,\ \ i = 1, \ldots, m,\ \ i \ne j\} \ne D.
$$
In this case, the {\it edges of $D$}: $D_i = \{x \in \ol{D}\mid \fn_i\cdot x = b_i\}$, $i = 1, \ldots, m$, are $(d-1)$-dimensional. The vector $\fn_i$ is the inward unit normal vector to the face $D_i$, for each $i = 1, \ldots, m$. The following subset of the boundary is called {\it non-smooth parts of the boundary}, and in our notation, it plays the role of the exceptional set $\CV$:
$$
\CV = \bigcup\limits_{1 \le i < j \le m}\left(D_i\cap D_j\right).
$$
We should note that $\CV$ satisfies Assumption~\ref{asmp:boundary}. 
The closure $\ol{D}$ of $D$ is called a {\it closed convex polyhedron}. 
In the sequel, we sometimes simply refer to $D$ or $\ol{D}$ as a {\it convex polyhedron}, if it is obvious from the context which one we are referring to. 

Now, let us define an {\it SRBM in the polyhedron} $\ol{D}$, with {\it drift vector} $\mu \in \BR^d$, {\it covariance matrix} $A$, and a $d\times m$-reflection matrix $R$. This is a continuous adapted process $Z = (Z(t), t \ge 0)$, which can be represented as 
$$
Z(t) = W(t) + RL(t),\ \ t \ge 0.
$$
Here, $W = (W(t), t \ge 0)$ is a $d$-dimensional Brownian motion with drift vector $\mu$ and covariance matrix $A$, and $L = (L_1, \ldots, L_m)'$, where for each $i = 1, \ldots, m$, $L_i = (L_i(t), t \ge 0)$ is a real-valued continuous nondecreasing adapted process with $L_i(0) = 0$, which can increase only when $Z \in D_i$. This is denoted by $Z = \SRBM^d(\ol{D}, R, \mu, A)$. This is a process which reflects on each face $D_i$, $i = 1, \ldots, m$, according to the vector $r_i$, the $i$th column of the reflection matrix $R$. A particular case is an {\it SRBM in the orthant} $S = \BR^d_+$, when $m = d$, $\fn_i = e_i$ is the standard $i$th unit vector in $\BR^d$, and $b_i = 0$. Then $R$ is a $d\times d$-matrix, and the process $Z$ is denoted by $\SRBM^d(R, \mu, A)$. 

An SRBM in a convex polyhedron, and, in particular, in the orthant, was a subject of extensive study over the past few decades. Existence and uniqueness results (weak and strong) are proved in \cite{DW1995, HR1981a, RW1988, TW1993}. For an SRBM in the orthant, see the survey \cite{Wil1995}. 

An SRBM in a convex polyhedron fits into our general framework as follows: 
define the reflection field $r : \pa D \setminus\CV \to \BR^d$ to be
$r(x) = r_i$ for $x \in D_i\setminus\CV$, $i = 1, \ldots, m$. This function is continuous on $\pa D\setminus\CV$. Sufficient conditions when an $\SRBM^d(\ol{D}, R, \mu, A)$ does not hit non-smooth parts of the boundary $\CV$ are known: see \cite{MyOwn3, Wil1987}. Let us give an example.

\begin{prop}
\label{prop:non-smooth-result}
Consider a reflected Brownian motion $\SRBM^d(R, \mu, A)$ with $A = (a_{ij})_{i, j = 1,\ldots, d}$, and with $R = (r_{ij})_{i, j = 1, \ldots, d}$ having $r_{ii} = 1$, $i = 1, \ldots, d$; $r_{ij} \le 0$, $i \ne j$; and the spectral radius of $I_d - R$ is strictly less than $1$. Then this SRBM a.s. does not hit non-smooth parts of the boundary if and only if
$$
r_{ij}a_{jj} + r_{ji}a_{ii} \ge 2a_{ij},\ \ i, j = 1, \ldots, d.
$$
\end{prop}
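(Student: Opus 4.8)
\textbf{Proof proposal for Proposition~\ref{prop:non-smooth-result}.}

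The plan is to reduce the statement to a question about a single pair of coordinates $(i,j)$ and then to a two-dimensional problem about whether a specific obliquely reflected Brownian motion in the quarter-plane hits the corner. First I would observe that, by the structure of the orthant and the reflection matrix, the event ``$Z$ hits $\CV$'' is the union over pairs $i<j$ of the events ``$Z$ hits $D_i\cap D_j$'', so it suffices to show that, for each fixed pair $(i,j)$, the SRBM a.s.\ avoids $D_i\cap D_j$ if and only if $r_{ij}a_{jj}+r_{ji}a_{ii}\ge 2a_{ij}$. To analyze a single pair, project onto the $(x_i,x_j)$-plane: the pair of coordinates $(Z_i,Z_j)$ behaves, away from the other faces, like a two-dimensional SRBM in $\BR_+^2$ with covariance matrix $\begin{pmatrix}a_{ii}&a_{ij}\\a_{ij}&a_{jj}\end{pmatrix}$ and reflection matrix $\begin{pmatrix}1&r_{ij}\\r_{ji}&1\end{pmatrix}$. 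The contributions of the local times on the other faces $D_k$ ($k\ne i,j$) push $(Z_i,Z_j)$ in directions $(r_{ik},r_{jk})$; because $r_{ik}\le 0$ and $r_{jk}\le 0$, these pushes are ``inward but with nonpositive components'', and one should argue (via a comparison/sandwiching argument on the coordinate processes, using that hitting the corner is a zero--one type event governed by local behavior near the origin) that they do not affect whether the corner $\{x_i=x_j=0\}$ is reached. This localization is where I expect the main technical work to lie.

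Once the problem is two-dimensional, I would invoke the known criterion for corner accessibility of an SRBM in the quarter-plane. The standard tool here is to pass to polar-type or skew-product coordinates: for a $2$-dimensional SRBM in a wedge (after a linear change of variables straightening the covariance to the identity, which turns $\BR_+^2$ into a wedge of some opening angle $\xi$ and turns the reflection vectors into vectors making angles $\theta_1,\theta_2$ with the walls), Varadhan--Williams (\cite{VaradhanWilliams1985}, or the treatment in \cite{Wil1987}) showed that the process hits the apex of the wedge with positive probability if and only if the parameter $\alpha:=(\theta_1+\theta_2)/\xi$ is $\ge 1$ (or some such normalization depending on orientation conventions), and otherwise avoids it a.s.\ by a zero--one law. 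The remaining step is then pure linear algebra: translate the condition on $\alpha$ back through the change of variables into a condition on $a_{ii},a_{jj},a_{ij},r_{ij},r_{ji}$, and check that it simplifies exactly to $r_{ij}a_{jj}+r_{ji}a_{ii}\ge 2a_{ij}$. I would expect this to be a direct, if slightly tedious, trigonometric computation (the $2a_{ij}$ on the right and the symmetric combination on the left are the tell-tale signs of this reduction), and the hypothesis that the spectral radius of $I_d-R$ is less than $1$ guarantees that the relevant wedge geometry is non-degenerate (the reflection vectors genuinely point into the domain and the SRBM is well defined).

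The organization of the argument would therefore be: (1) decompose $\CV$ into the faces' pairwise intersections and reduce to one pair; (2) carry out the two-dimensional localization, controlling the auxiliary local-time terms with nonpositive reflection components — the crux of the proof; (3) apply the linear change of variables to reach a canonical wedge SRBM; (4) quote the wedge corner-accessibility dichotomy; (5) compute the explicit inequality. The main obstacle is step (2): justifying rigorously that the presence of the other reflecting faces, and the coupling of all $d$ coordinates through the matrix $R$, does not change the yes/no answer for a given corner. One clean way to handle it is to use the exit-system/excursion-theory description or a supermartingale ($h$-function) argument: construct an explicit function, harmonic for the generator near $D_i\cap D_j$ with the right oblique Neumann-type boundary conditions on $D_i$ and $D_j$ and with a sign on the remaining faces forced by $r_{ik},r_{jk}\le 0$, so that its composition with $Z$ is a local sub- or super-martingale, and read off accessibility of the corner from whether this function blows up there. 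This is the approach I would pursue, as it simultaneously handles the two-dimensional criterion and the localization in one stroke.
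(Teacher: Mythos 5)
First, a point of reference: the paper itself never proves Proposition~\ref{prop:non-smooth-result}; it is quoted as a known example, with the result imported from the literature (\cite{MyOwn3}, see also \cite{Wil1987}), so your proposal has to stand on its own. Its skeleton --- decompose $\CV$ into the pairwise intersections $D_i\cap D_j$, reduce to a two-dimensional SRBM in the quarter plane, straighten the covariance so the quadrant becomes a wedge, and invoke the Varadhan--Williams corner dichotomy --- is the right shape for the two-dimensional core. But the step you yourself call the crux is not only unproved; the mechanism you sketch for it points the wrong way. On a face $D_k$ with $k\ne i,j$, the local time pushes the pair $(Z_i,Z_j)$ in the direction $(r_{ik},r_{jk})$, whose entries are nonpositive: that is a push \emph{toward} the edge $\{x_i=x_j=0\}$, not a harmless perturbation. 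Hence a comparison with the idealized two-dimensional SRBM gives nothing for the substantive implication (inequality $\Rightarrow$ a.s.\ avoidance), since the full process is, if anything, driven toward the edge more than the comparison process. The same sign problem breaks the proposed $h$-function fix: if $h$ is a function of $(x_i,x_j)$ blowing up at the edge, then on the faces $D_k$ the derivative of $h$ along $(r_{ik},r_{jk})$ is typically nonnegative (the push points toward the singularity of $h$), which is exactly the wrong sign for $h(Z)$ to be a local supermartingale; the signs $r_{ik},r_{jk}\le 0$ work against you rather than ``forcing'' the boundary condition you need. Controlling these extra boundary terms --- using the M-matrix structure of $R$ provided by the spectral radius hypothesis (which is needed for much more than well-posedness), comparison theorems for SRBMs, or Lyapunov functions whose contributions on the remaining faces are genuinely estimated --- is where the cited proofs do their work, and it is absent here.

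Two further corrections. The Varadhan--Williams criterion is misquoted: with $\alpha=(\theta_1+\theta_2)/\xi$ in the usual normalization, the corner of the wedge is unattainable from interior starting points iff $\alpha\le 0$, and is attained when $\alpha>0$; the threshold $\alpha=1$ you half-remember is the dichotomy for the semimartingale property of reflected Brownian motion in a wedge, not for attainability. With the correct threshold the ``tedious trigonometric computation'' does return $r_{ij}a_{jj}+r_{ji}a_{ii}\ge 2a_{ij}$ (this is essentially the Ichiba--Karatzas computation for collisions of Brownian particles), but you have not carried it out, and with your stated threshold it would not. Finally, the necessity direction also leans on the unproved localization: even granting that the two-dimensional process hits its corner when the inequality fails for some pair $(i,j)$, you still need a rigorous comparison transferring positive probability of hitting to the $d$-dimensional SRBM; the favorable direction of the pushes makes this plausible, but it is a theorem to prove, not an observation. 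As it stands, both halves of the ``if and only if'' rest on your step (2), and step (2) is missing.
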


\subsection{Main Result} The following result is a corollary of Theorem~\ref{thm:main}. 

\begin{thm}
Take $m$ sequences of real numbers $(b_{i, n})_{n \ge 0}$, $i = 1, \ldots, m$. Take $m$ sequences of unit vectors in $\BR^d$: $(\fn_{i, n})_{n \ge 0}$, $i = 1, \ldots, m$.
Assume that 
\begin{equation}
\label{eq:limit-parameters-SRBM}
\fn_{i, n} \to \fn_{i, 0},\ \ b_{i, n} \to b_{i, 0},\ \ n \to \infty,\ \ \mbox{for each}\ \ i = 1, \ldots, m.
\end{equation}
Consider a sequence $(D_n)_{n \ge 0}$ of convex polyhedra given by
$$
D_n = \{x \in \BR^d\mid x\cdot \fn_{i, n} > b_{i, n},\ \ i = 1, \ldots, m\}.
$$
Take a sequence of positive definite symmetric $d\times d$ matrices $(A_n)_{n \ge 0}$ such that $A_n \to A_0$ as $n \to \infty$. Take a sequence $(g_n)_{n \ge 0}$ in $\BR^d$ such that $g_n \to g_0$ as $n \to \infty$. Take a sequence of reflection matrices $(R_n)_{n \ge 0}$ such that $R_n \to R_0$. Assume that for every $n \ge 0$, the process
$Z_n := \SRBM^d(\ol{D}_n, R_n, g_n, A_n)$, starting from $Z_n(0) = z_n \in \ol{D}_n$, exists in the weak sense and is unique in law, and $z_n \to z_0$. Assume also that the process $Z_0$ does not hit non-smooth parts of the boundary $\pa D_0$. Then 
$$
Z_n \Ra Z_0\ \ \mbox{weakly in}\ \ C([0, T], \BR^d)\ \ \mbox{for every}\ \ T > 0\,.
$$
\label{thm:SRBM}
\end{thm}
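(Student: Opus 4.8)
The plan is to deduce Theorem~\ref{thm:SRBM} directly from Theorem~\ref{thm:main} by verifying its hypotheses. So the proof is essentially a translation exercise: we must check that convergence of the defining data of the polyhedra and matrices in the sense of \eqref{eq:limit-parameters-SRBM} and $A_n\to A_0$, $g_n\to g_0$, $R_n\to R_0$ implies the convergences $D_n\Ra D_0$, $g_n\Ra g_0$, $A_n\Ra A_0$, $r_n\Ra r_0$ required by Theorem~\ref{thm:main}, together with condition (b) on the exceptional sets $\CV_n$.

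\emph{Step 1: $D_n\Ra D_0$.} By Definition~\ref{def:conv-domain} it suffices to show $\phi_n(x)\to\phi_0(x)$ pointwise, where $\phi_n$ is the signed distance to $\pa D_n$. For a convex polyhedron there is an explicit handle on $\phi_n$: for $x\in \ol{D}_n$ one has $\dist(x,\pa D_n)=\min_{i}(x\cdot\fn_{i,n}-b_{i,n})$, while for $x\notin\ol{D}_n$ the signed distance is the negative of the distance to the closed convex set $\ol{D}_n$, which is again a continuous function of the finitely many parameters $(\fn_{i,n},b_{i,n})$. The cleanest route, though, is to invoke the equivalences of Lemma~\ref{lemma:equiv}: I would verify condition (vi) or (vii), i.e.\ that $\pa D_n\to\pa D_0$ in Wijsman topology and the inclusions \eqref{eq:inclusion} hold. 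Both follow from \eqref{eq:limit-parameters-SRBM} by elementary arguments — a point in $D_0$ satisfies the strict inequalities $x\cdot\fn_{i,0}>b_{i,0}$ with some slack, hence for $n$ large it still satisfies $x\cdot\fn_{i,n}>b_{i,n}$, giving $D_0\subseteq\varliminf D_n$; the reverse inclusion $\varlimsup\ol{D}_n\subseteq\ol{D}_0$ is the limit of the nonstrict inequalities; and the Wijsman convergence $\pa D_n\to\pa D_0$ is checked against points $x$ by approximating a nearest boundary point. One must be slightly careful about degeneracies — a face could ``disappear in the limit'' if some $\fn_{i,n}$ or $b_{i,n}$ conspire so that constraint $i$ becomes redundant — but the non-degeneracy assumption on $D_0$ (each constraint is essential and each edge is $(d-1)$-dimensional) is imposed only on the limit, and for the inclusions and Wijsman convergence this causes no real trouble since redundant constraints simply do not affect $\pa D_n$.

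\emph{Step 2: convergence of $g_n,A_n,r_n$ in the $\Ra$ sense, and condition (b).} Since $g_n\to g_0$ and $A_n\to A_0$ are constants (independent of the spatial variable), the locally uniform convergence $g_n\Ra g_0$, $A_n\Ra A_0$ of Definition~\ref{defn:uniform-conv} is immediate from criterion (i): for any $z_{n_k}\to z_0$ one has $g_{n_k}(z_{n_k})=g_{n_k}\to g_0=g_0(z_0)$, and likewise for $A$; by Remark~\ref{rmk:sqrt} this also gives $\si_n\Ra\si_0$. For the reflection field $r_n(x)=r_{i,n}$ for $x\in D_{i,n}\setminus\CV_n$, where $r_{i,n}$ is the $i$th column of $R_n$: given a subsequence and $z_{n_k}\in\pa D_{n_k}\setminus\CV_{n_k}$ with $z_{n_k}\to z_0\in\pa D_0\setminus\CV_0$, I need that eventually $z_{n_k}$ lies on the face with the ``same'' index $i$ as the face of $D_0$ containing $z_0$, so that $r_{n_k}(z_{n_k})=r_{i,n_k}\to r_{i,0}=r_0(z_0)$. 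This follows because $z_0\notin\CV_0$ means $z_0$ lies on exactly one face, say $D_{i,0}$, with strict inequalities for all $j\ne i$; by \eqref{eq:limit-parameters-SRBM} these strict inequalities persist for large $k$, forcing $z_{n_k}\in D_{i,n_k}$ and (since $z_{n_k}\in\pa D_{n_k}$) the constraint $i$ to be the active one. This is the step requiring the most care. Finally, condition (b): by Lemma~\ref{lemma:conv-of-non-smooth} it suffices that $x_{n_k}\in\CV_{n_k}$, $x_{n_k}\to x_0$ implies $x_0\in\CV_0$. A point in $\CV_{n_k}$ lies on at least two faces $D_{i,n_k}\cap D_{j,n_k}$; passing to a further subsequence we may fix the pair $(i,j)$, and then $x_{n_k}\cdot\fn_{i,n_k}=b_{i,n_k}$, $x_{n_k}\cdot\fn_{j,n_k}=b_{j,n_k}$ pass to the limit giving $x_0\cdot\fn_{i,0}=b_{i,0}$ and $x_0\cdot\fn_{j,0}=b_{j,0}$; combined with $x_0\cdot\fn_{k,0}\ge b_{k,0}$ for all $k$ (from $\varlimsup\ol{D}_n\subseteq\ol{D}_0$) this says $x_0\in D_{i,0}\cap D_{j,0}\subseteq\CV_0$ — \emph{provided} $\fn_{i,0}$ and $\fn_{j,0}$ are not parallel, i.e.\ the two limiting faces are genuinely distinct; but if they coincide, then $i=j$ is forced by the essentiality of the constraints of $D_0$, and then $x_{n_k}$ being on $D_{i,n_k}\cap D_{j,n_k}$ with $i=j$ just means $x_{n_k}\in D_{i,n_k}$, which does not put $x_0$ in $\CV_0$ — so I should argue instead that in that case $x_{n_k}\notin\CV_{n_k}$ for large $k$ after all, or note that the hypothesis already rules this out. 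The safest formulation is: a convergent sequence from $\CV_{n_k}$ forces $x_0$ to lie on two faces of $D_0$ with non-parallel normals, hence in $\CV_0$.

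\emph{Main obstacle.} The genuine content beyond bookkeeping is controlling how the \emph{combinatorial type} of the polyhedron behaves under the limit: faces can merge or become redundant along the sequence, and one must make sure this never produces a limit point $x_0$ in $\pa D_0\setminus\CV_0$ that is approached by points $z_{n_k}\in\CV_{n_k}$ (which would break condition (b)) nor a point $z_0$ where the reflection index is ill-defined in the limit (which would break $r_n\Ra r_0$). The resolution is uniform: since $z_0\notin\CV_0$ the non-redundancy/essentiality of the limit polyhedron's constraints pins down a unique active index whose defining strict inequalities are stable under small perturbation of $(\fn,b)$, and \eqref{eq:limit-parameters-SRBM} makes the perturbation small. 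Once Steps 1 and 2 are in place, Theorem~\ref{thm:main} with alternative (b) applies verbatim — noting that $g_0,A_0,r_0$ are trivially locally bounded since $g_0,A_0$ are constant and $r_0$ takes finitely many values — and yields $Z_n\Ra Z_0$ in $C([0,T],\BR^d)$ for every $T>0$.
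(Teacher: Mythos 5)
Your proposal is correct and follows essentially the same route as the paper's proof: reduce to Theorem~\ref{thm:main} with alternative (b), prove $D_n \Ra D_0$ via Lemma~\ref{lemma:equiv} (vi)--(vii) using persistence of strict inequalities and passage to the limit in the active constraints, prove condition (b) via Lemma~\ref{lemma:conv-of-non-smooth} by pigeonholing on the pair of active faces, and prove $r_n \Ra r_0$ by identifying the face index of the limit point (the paper does this by pigeonhole on subsequences, you by stability of the strict inequalities at $z_0 \notin \CV_0$ --- both work). Your worry about parallel limiting normals in the condition-(b) step is unnecessary: $\CV_0$ is by definition the union of $D_{0,i}\cap D_{0,j}$ over index pairs $i<j$, so the limit relations $x_0\cdot\fn_{i,0}=b_{i,0}$, $x_0\cdot\fn_{j,0}=b_{j,0}$ together with $x_0\in\ol{D}_0$ already give $x_0\in\CV_0$ (and the essentiality assumption on $D_0$ rules out duplicated half-spaces in the limit anyway).
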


The proof is postponed until the next subsection. Let us give an application. 

\begin{exm} Consider a fixed convex polyhedron $D \subseteq \BR^d$. Let 
$$
\CP := \{(R, A)\mid \SRBM^d(\ol{D}, R, \mu, A)\ \ \mbox{does not hit non-smooth parts of the boundary}\}.
$$
This definition makes sense because of the following fact: The property that an $\SRBM^d(\ol{D}, R, \mu, A)$ does not hit non-smooth parts of the boundary is independent of the starting point $z \in D$ and of the drift vector $\mu$. The proof of this independence statement is similar to that of \cite[Proposition 3.3]{MyOwn3}.
From Theorem~\ref{thm:SRBM}, we can conclude that the process $\SRBM^d(\ol{D}, R, \mu, A)$, starting from $z \in D$, is continuous as an element of $C([0, T], \BR^d)$, for every $T > 0$, on the set 
$$
\{(z, R, \mu, A)\mid z \in D,\ (R, A) \in \CP,\ \mu \in \BR^d\}.
$$
\end{exm} 

\subsection{Proof of Theorem~\ref{thm:SRBM}} We need to show that: 

(i) $D_n \Ra D_0$;

(ii) the condition (b) from Theorem~\ref{thm:main} is satisfied;

(iii) $r_n \Ra r_0$. 

\medskip

{\it Proof of (i).} We use Lemma~\ref{lemma:equiv} (vii). Take a subsequence 
$(n_k)_{k \ge 1}$ and let $x_{n_k} \in \pa D_{n_k}$ be such that $x_{n_k} \to x_0$.
Let us show that $x_0 \in \pa D_0$. The boundary $\pa D_n$ for every $n$ consists of $m$ parts:
$$
\pa D_n = \bigcup\limits_{i=1}^mD_{n, i},\ \ D_{n, i} := \{x \in \BR^d\mid \fn_{i, n}\cdot x = b_{i, n},\ \ \fn_{j, n}\cdot x \ge b_{j, n},\ \ j = 1, \ldots, m,\ \ j \ne i\}.
$$
By the pigeonhole principle, there exists an $i_0 \in \{1, \ldots, m\}$ and a subsequence $(n'_k)_{k \ge 1} \subseteq (n_k)_{k \ge 1}$ such that 
$x_{n'_k} \in D_{n'_k, i_0}$. 
That is,
$$
\fn_{i_0, n'_k}\cdot x_{n'_k} = b_{i_0, n'_k},\ \ \fn_{j, n'_k}\cdot x_{n'_k} \ge b_{j, n'_k},\ \ j = 1, \ldots, m,\ \ j \ne i_0.
$$
Letting $k \to \infty$, we have: $\fn_{i_0, 0}\cdot x_0 = b_{i_0, 0}$, and $\fn_{j, 0}\cdot x_0 \ge b_{j, 0}$, $j = 1, \ldots, m$, $j \ne i_0$. Therefore, $x_0 \in D_{0, i_0} \subseteq \pa D_0$. Now, let us show~\eqref{eq:inclusion}. Take $x_0 \in D_0$. Then $\fn_{i, 0}\cdot x_0 > b_{i, 0}$, for $i = 1, \ldots, m$. 
From~\eqref{eq:limit-parameters-SRBM}, we get: there exists $n_0$ such that for $n > n_0$ we have: $\fn_{i, n}\cdot x_0 > b_{i, n}$, $i = 1, \ldots, m$. So $x_0 \in D_n$ for $n > n_0$; therefore, $x_0 \in \varliminf D_n$. Similarly, if $x_0 \in \ol{D}_0^c$, then there exists a $j \in \{1, \ldots, m\}$ such that $\fn_{j, 0}\cdot x_0 < b_{j, 0}$. From~\eqref{eq:limit-parameters-SRBM}, we get: there exists $n_0$ such that for $n > n_0$ we have: $\fn_{j, n}\cdot x_0 < b_{j, n}$. Therefore, $x_0 \in \ol{D}_n^c$ for $n > n_0$; so $x_0 \in \varliminf \ol{D}_n^c$. This completes the proof of ~\eqref{eq:inclusion}.

\medskip

{\it Proof of (ii).} We use Lemma~\ref{lemma:conv-of-non-smooth}. The domain $D_n$ has non-smooth parts of the boundary
$$
\CV_n := \bigcup\limits_{1 \le i < j \le m}D_{n, i, j},
$$
where we denote
$$
D_{n, i, j} := \left\{x \in \BR^d\mid x\cdot\fn_{i, n} = b_{i, n},\ \ x\cdot\fn_{j, n} = b_{j, n},\ \ x\cdot\fn_{q, n} \ge b_{q, n},\ q \ne i, j\right\}.
$$
Now, take a sequence $(x_{n_k})_{k \ge 1}$ with $x_{n_k} \in \CV_{n_k}$ and show that if $x_{n_k} \to x_0$, then $x_0 \in \CV_0$. By the pigeonhole principle, there exist a subsequence $(n'_k)_{k \ge 1}$ and $1 \le i < j \le m$ such that $x_{n'_k} \in D_{n'_k, i, j}$. Therefore,
$$
x_{n'_k}\cdot\fn_{i, n'_k} = b_{i, n'_k},\ \ x_{n'_k}\cdot\fn_{j, n'_k} = b_{j, n'_k},\ \ x_{n'_k}\cdot\fn_{q, n'_k} \ge b_{q, n'_k},\ q \ne i, j.
$$
Letting $k \to \infty$, we get:
$$
x_0\cdot\fn_{i, 0} = b_{i, 0},\ \ x_0\cdot\fn_{j, 0} = b_{j, n'_k},\ \ x_0\cdot\fn_{q, n'_k} \ge b_{q, n'_k},\ q \ne i, j.
$$
Therefore, $x_0 \in D_{0, i, j} \subseteq \CV_0$. This completes the proof of (ii).

\medskip

{\it Proof of (iii).} Take $x_n \in \pa D_n\setminus\CV_n,\, n = 0, 1, 2, \ldots$ such that $x_n \to x_0$. We need to prove that $r_n(x_n) \to r_0(x_0)$. Let us show that for every subsequence $(n_k)_{k \ge 1}$, there exists a subsequence $(n'_k)_{k \ge 1} \subseteq (n_k)_{k \ge 1}$ such that 
$$
r_{n'_k}\bigl(x_{n'_k}\bigr) \to r_0(x_0).
$$
Indeed, by the pigeonhole principle, there exists a $j \in \{1, \ldots, m\}$ and a subsequence 
$(n'_k)_{k \ge 1}$ such that $x_{n'_k} \in D_{n'_k, j}$. Then, as discussed in the proof of (i) above, $x_0 \in D_{0, j}$. Denote the $j$th column of $R_n$ by $r_{n, j}$. Then 
$r_n(x) \equiv r_{n, j}$ for $x \in D_{n, j}$, by definition of a reflection field for an SRBM in a convex polyhedron. Now, $r_{n'_k}\bigl(x_{n'_k}\bigr) = r_{n'_k, j} \to r_{0, j} = r_0(x_0)$, because $R_n \to R_0$. This completes the proof.

\section{Proof of Theorem~\ref{thm:main}}

\subsection{Outline of the proof} For the rest of this section, fix a time horizon $T > 0$. The first step is {\it localization}. Consider a compact set 
$\CK \subseteq \BR^d\setminus\CV$ such that $z_0 \in \Int\CK$. Let 
$$
\tau_{\CK, n} := \inf\{t \ge 0\mid Z_n(t) \notin \Int\CK\},\ \ n = 0, 1, 2, \ldots
$$
Let $Z^{\CK}_n(t) \equiv Z_n(t\wedge\tau_{\CK, n})$. We say that a continuous adapted process $\zeta = (\zeta(t), t \in [0, T])$ {\it behaves as $\ol{Z}_0$ until it exits $\Int\CK$} if for the stopping time
$$
\ol{\tau}_{\CK, 0} := \inf\{t \ge 0\mid \zeta(t) \notin \CK\},
$$
the process $\zeta\left(\cdot\wedge\ol{\tau}_{\CK, 0}\right)$ has the same law as $Z^{\CK}_0$. The following lemma was, in fact, already proved as Lemma 4.1 in \cite{MyOwn8}. 

\begin{lemma}
\label{lemma:local}
Assume that for every compact subset $\CK$ as above every weak limit point of the sequence $(Z^{\CK}_n)_{n \ge 1}$ in $C([0, T], \BR^d)$ behaves as $\ol{Z}_0$ until it exits $\Int\CK$. Then the conclusion of Theorem~\ref{thm:main} is true.
\end{lemma}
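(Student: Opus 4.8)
The plan is a localization-and-exhaustion argument: first reduce the convergence $Z_n \Ra Z_0$ to weak convergence of the \emph{localized} processes $Z_n^{\CK}$, which the hypothesis controls, then let $\CK$ exhaust the ``good'' region, using that $Z_0$ never hits its exceptional set. Concretely, it suffices to prove $\EE F(Z_n) \to \EE F(Z_0)$ for every bounded continuous $F : C([0,T],\BR^d) \to \BR$. I would choose an increasing sequence of compact sets $\CK_m$ with $\CK_m \subseteq \Int \CK_{m+1}$, with $z_0 \in \Int \CK_1$, with $\dist(\CK_m, \CV_0) > 0$, and with $\bigcup_m \Int \CK_m \supseteq \BR^d \setminus \CV_0$ (e.g.\ $\CK_m = \{x : \norm{x} \le m,\ \dist(x, \CV_0) \ge 1/m\}$ for large $m$). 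Conditions (a)/(b) of Theorem~\ref{thm:main} are what allow these $\CK_m$ to avoid the exceptional sets $\CV_n$ of all the $D_n$ at once for $n$ large (under (a) only $\CV_0$ and finitely many $\CV_n$ matter; under (b), $\CV_n \to \CV_0$ forces $\CV_n \cap \CK_m = \varnothing$ for large $n$), so that inside $\Int\CK_m$ the process $Z_n$ is never frozen at $\CV_n$ and each $\CK_m$ is admissible for the hypothesis. Since $Z_0$ does not hit $\CV_0$, its compact path image $Z_0([0,T])$ lies a.s.\ in the open set $\BR^d\setminus\CV_0 = \bigcup_m \Int\CK_m$, hence in some $\Int\CK_m$; therefore $\MP(\tau_{\CK_m,0} \le T) \to 0$ as $m \to \infty$.

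\textbf{Step 2 ($Z_n^{\CK_m} \Ra Z_0^{\CK_m}$).} Fix $m$. I would first record that $(Z_n^{\CK_m})_{n \ge 1}$ is tight in $C([0,T],\BR^d)$ --- this is the main technical point, addressed below. Let $\zeta$ be any subsequential weak limit. Each $Z_n^{\CK_m}$ takes values in $\overline{\Int\CK_m} \subseteq \CK_m$ (it stays in $\Int\CK_m$ up to $\tau_{\CK_m,n}$ and is frozen at a point of $\overline{\Int\CK_m}$ afterwards), a closed-set constraint that passes to the weak limit; hence $\zeta$ takes values in $\CK_m$, so $\ol{\tau}_{\CK_m,0} = \inf\{t : \zeta(t) \notin \CK_m\} = +\infty$, and the hypothesis ``$\zeta$ behaves as $\ol{Z}_0$ until it exits $\Int\CK_m$'' reduces to $\zeta = \zeta(\cdot \wedge \ol{\tau}_{\CK_m,0})$ having the law of $Z_0^{\CK_m}$. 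Since $(Z_n^{\CK_m})$ is tight and all its subsequential limits share the law of $Z_0^{\CK_m}$, we conclude $Z_n^{\CK_m} \Ra Z_0^{\CK_m}$.

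\textbf{Step 3 ($3\eps$ estimate).} For all $m, n$,
$$
|\EE F(Z_n) - \EE F(Z_0)| \le |\EE F(Z_n) - \EE F(Z_n^{\CK_m})| + |\EE F(Z_n^{\CK_m}) - \EE F(Z_0^{\CK_m})| + |\EE F(Z_0^{\CK_m}) - \EE F(Z_0)|.
$$
The middle term tends to $0$ as $n \to \infty$ by Step~2; the last term is $\le 2\norm{F}_\infty \MP(\tau_{\CK_m,0} \le T)$, since $Z_0^{\CK_m} = Z_0$ on $\{\tau_{\CK_m,0} > T\}$; and the first term is $\le 2\norm{F}_\infty \MP(\tau_{\CK_m,n} \le T)$, where, because $\CK_m \subseteq \Int\CK_{m+1}$, one has $\{\tau_{\CK_m,n} \le T\} = \{Z_n^{\CK_{m+1}} \in A_m\}$ with $A_m := \{x \in C([0,T],\BR^d) : x(t) \notin \Int\CK_m \text{ for some } t \in [0,T]\}$ closed, so by the Portmanteau theorem and $Z_n^{\CK_{m+1}} \Ra Z_0^{\CK_{m+1}}$ we get $\limsup_n \MP(\tau_{\CK_m,n} \le T) \le \MP(Z_0^{\CK_{m+1}} \in A_m) = \MP(\tau_{\CK_m,0} \le T)$. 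Altogether $\limsup_n |\EE F(Z_n) - \EE F(Z_0)| \le 4\norm{F}_\infty \MP(\tau_{\CK_m,0} \le T)$, and letting $m \to \infty$ completes the proof.

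\textbf{Main obstacle.} The delicate ingredient is the uniform-in-$n$ tightness of $(Z_n^{\CK_m})$ used in Step~2: one must bound the reflection term $L_n$ on $[0, \tau_{\CK_m,n}]$ uniformly in $n$, which calls for uniform oscillation/Lipschitz estimates for the Skorokhod problems associated with the $C^2$ domains $D_n$ --- this is exactly where $D_n \Ra D_0$ and $r_n \Ra r_0$ enter, together with the uniform boundedness of $g_n, \si_n$ on $\CK_m$ that follows from $g_n \Ra g_0$, $A_n \Ra A_0$ and the local boundedness of $g_0, A_0$. (If tightness of $(Z_n^{\CK})$ is instead granted as part of the standing hypothesis --- which is natural, since the statement only speaks of its weak limit points --- then the remaining work is just the bookkeeping of Step~1.)
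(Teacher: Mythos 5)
Your Steps 1--3 are a correct, self-contained version of the reduction; note that the paper itself does not prove this lemma in-text but cites \cite{MyOwn8} (Lemma 4.1 there), and your argument is exactly the localization/exhaustion the statement is meant to encapsulate. In particular: the exhaustion $\CK_m=\{x:\norm{x}\le m,\ \dist(x,\CV_0)\ge 1/m\}$ with $\CK_m\subseteq\Int\CK_{m+1}$; the observation that any weak limit point $\zeta$ of $(Z_n^{\CK_m})_n$ takes values in $\CK_m$, so $\ol{\tau}_{\CK_m,0}=\infty$ and the hypothesis forces $\zeta$ to have the law of $Z_0^{\CK_m}$; the event identity $\{\tau_{\CK_m,n}\le T\}=\{Z_n^{\CK_{m+1}}\in A_m\}$ with $A_m$ closed; the Portmanteau bound; and $\MP(\tau_{\CK_m,0}\le T)\to 0$ from the assumption that $Z_0$ avoids $\CV_0$ --- all of this checks out. (Conditions (a)/(b) are not actually needed for this reduction; in the paper they enter only through the Remark following the lemma and the later identification lemmas.)

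The one genuine issue is the one you flag: tightness of $(Z_n^{\CK_m})_{n\ge1}$. As literally stated the lemma hypothesizes only a property of weak limit points, which is vacuous if the localized sequence is not tight, so a standalone proof must either take tightness as implicit in the hypothesis or derive it from the standing assumptions of Theorem~\ref{thm:main}. In this paper's architecture that is not part of Lemma~\ref{lemma:local} at all: tightness of $Z_n^{\CK}=V_n+L_n$ is established separately by Lemmas~\ref{lemma:tight-V}, \ref{lemma:tight-l}, \ref{lemma:tight-L} (which do not use Lemma~\ref{lemma:local}), and the lemma is invoked only after these, so your proof is complete once the lemma is read as this pure reduction with tightness imported. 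I would, however, warn against the route you sketch for proving tightness yourself: uniform Lipschitz/oscillation estimates for the Skorokhod problem are in general not available for oblique reflection in curved domains; the paper instead controls $l_n$ by applying It\^o's formula to $\psi\circ\phi_0$ (a cutoff of the signed distance to $\pa D_0$) and using the uniform lower bound $\nabla\phi_0(x)\cdot r_n(x)\ge\eps_0$ on $\pa D_n\cap\CK$ (Lemma~\ref{lemma:eps-0}), which is precisely where $D_n\Ra D_0$ and $r_n\Ra r_0$ enter.
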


\begin{rmk} If either (a) or (b) holds, then for every compact set $\CK \subseteq \BR^d\setminus\CV_0$ there exists $n_{\CK}$ such that for $n \ge n_{\CK}$, we have: 
$Z_n^{\CK}$ does not hit $\CV_n$. Indeed, if (a) holds true, then there is nothing to prove. If (b) holds true, then $\dist(\CK, \CV_0) := \eps_0 > 0$, and there exists $n_{\CK}$ such that for $n \ge n_{\CK}$, we have: 
$$
\max\limits_{x \in \CV_n\cap\CK}\dist(x, \CV_0) < \eps_0.
$$
In this case, for every $n \ge n_{\CK}$ we have: $\CK\cap\CV_n = \varnothing$. Therefore, $Z^{\CK}_n(t) \notin \CV_n$ for these $n$ and for $t \in [0, T]$. 
\end{rmk}

The rest of the proof of Theorem~\ref{thm:main} tracks the proofs from the paper \cite{MyOwn8}.  

\begin{lemma}
\label{lemma:tight-phi}
The sequence $(\phi_0(Z^{\CK}_n(\cdot)))_{n \ge 1}$ is tight in $C[0, T]$. 
\end{lemma}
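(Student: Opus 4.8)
The plan is to apply the standard tightness criterion in $C[0,T]$: a sequence of processes is tight if the initial values are tight and the moduli of continuity are uniformly small in probability, i.e. for every $\eta > 0$, $\lim_{\de \to 0}\limsup_{n \to \infty}\MP\bigl(\oa(\phi_0(Z^{\CK}_n(\cdot)), [0,T], \de) > \eta\bigr) = 0$. The initial values $\phi_0(Z^{\CK}_n(0)) = \phi_0(z_n)$ converge to $\phi_0(z_0)$ (using $z_n \to z_0$ and continuity of $\phi_0$, which is Lipschitz with constant $1$), so tightness of initial values is immediate. The substance is the modulus-of-continuity bound, and the key idea is that $\phi_0$ is $1$-Lipschitz, so it suffices to control the modulus of continuity of $Z^{\CK}_n$ itself; but actually one gets a better handle by working directly with $\phi_0(Z_n^{\CK}(\cdot))$ via It\^o's formula, because the reflection term — which is the troublesome part — has a sign that helps.

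The main steps: First, smooth things out. Since $\CK \subseteq \BR^d \setminus \CV_0$ is compact and disjoint from the non-smooth set, $\phi_0$ is $C^2$ in a neighborhood of $\pa D_0 \cap \CK$ (standard fact about signed distance functions to a $C^2$ hypersurface), and we may take $\CK$ small enough, or multiply by a cutoff, so that $\phi_0 \in C^2$ on a neighborhood of $\CK$; write $\psi$ for this modification, which agrees with $\phi_0$ near $\pa D_0$. Apply It\^o's formula to $\psi(Z^{\CK}_n(t))$ on $[0, \tau_{\CK,n}]$ using the SDE~\eqref{eq:def-main}: the drift and martingale parts have bounded integrands on $\CK$ (because $g_n \Ra g_0$, $A_n \Ra A_0$, and $g_0, A_0$ are locally bounded, so $g_n, \si_n$ are uniformly bounded on $\CK$ for large $n$ by a standard consequence of locally uniform convergence), giving a uniform modulus-of-continuity estimate for those two terms by the usual $L^2$/Burkholder argument. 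The reflection contribution is $\int r_n(Z_n(s)) \cdot \nabla\psi(Z_n(s))\,\md l_n(s)$. Here is where we use the geometry: $\nabla\psi(z) = \fn_0(z)$ for $z \in \pa D_0$, and by Lemma~\ref{lemma:equiv} / Corollary~\ref{cor:simple}, points of $\pa D_n$ inside $\CK$ are close to $\pa D_0$ for large $n$, so $\nabla\psi(Z_n(s)) \approx \fn_0(\cdot)$ there; combined with $r_n \Ra r_0$ and $r_0 \cdot \fn_0 = 1$, the integrand $r_n(Z_n(s))\cdot\nabla\psi(Z_n(s))$ is close to $1$, hence bounded and bounded away from $0$ on the support of $\md l_n$. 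This means the reflection term in the It\^o expansion of $\psi(Z^{\CK}_n)$ is \emph{monotone} (up to an asymptotically negligible error) and comparable to $l_n$; rearranging, $l_n(t \wedge \tau_{\CK,n})$ is controlled by $\psi$ evaluated along the path plus the drift and martingale parts, all of which are uniformly bounded, so $(l_n(\cdot \wedge \tau_{\CK,n}))_{n}$ has a uniform modulus of continuity estimate as well. Feeding that back, every term in the It\^o expansion of $\psi(Z^{\CK}_n)$ has the Kolmogorov-type increment bound, which gives tightness of $(\psi(Z^{\CK}_n(\cdot)))_n$; since $\phi_0 = \psi$ in a neighborhood of $\pa D_0$ and $\phi_0(Z^{\CK}_n)$ only matters (for controlling oscillation) where the process is near the boundary — away from $\pa D_0$, $\phi_0(Z_n^{\CK})$ is bounded away from $0$ and its oscillation is again controlled by the $1$-Lipschitz bound on $\phi_0$ plus the modulus of $Z_n^{\CK}$, which in turn follows from the same It\^o estimates applied coordinatewise — we conclude tightness of $(\phi_0(Z^{\CK}_n(\cdot)))_{n\ge 1}$ in $C[0,T]$.

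The main obstacle I expect is making rigorous the claim that the local time $l_n$ is uniformly controlled, i.e. decoupling the bound on $l_n$ from a priori control of $Z_n$: one needs that $r_n(Z_n(s))\cdot\nabla\psi(Z_n(s)) \ge c > 0$ uniformly for $s \le \tau_{\CK,n}$ and all large $n$, which requires knowing that when $Z_n(s) \in \pa D_n \cap \CK$ (the only times $l_n$ grows) the point is genuinely close to $\pa D_0$ so that $\nabla\psi \approx \fn_0$ and the locally uniform convergence $r_n \Ra r_0$ can be invoked — this is exactly the content of Corollary~\ref{cor:simple}(i) and Definition~\ref{defn:uniform-conv}(i), but assembling these uniformly in $n$ (uniform in the sense that a single $c$ and a single threshold $n_0$ work) takes care. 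A secondary technical point is that $\psi$ is only $C^2$, so It\^o's formula applies cleanly; one should double-check that the cutoff modification of $\phi_0$ does not interfere near $\pa D_0$, which is fine since $\dist(\CK \cap \pa D_0, \CV_0) > 0$.
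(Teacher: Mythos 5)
Your overall plan (It\^o's formula applied to a $C^2$ modification of $\phi_0$, uniform bounds on $g_n,\si_n$ on $\CK\cap\ol{D}_n$, and the lower bound $r_n\cdot\nabla\psi\ge c>0$ on $\pa D_n\cap\CK$) reproduces ingredients the paper does use, but for the \emph{later} Lemma~\ref{lemma:tight-l}, and the step where you extract a modulus-of-continuity estimate for $l_n$ is a genuine gap. From the rearranged It\^o identity, the increments of $l_n$ over an interval $[t_1,t_2]$ are bounded by $c^{-1}$ times the increment of $\psi(Z^{\CK}_n)$ plus the increments of the drift, martingale and second-order terms. The latter are small for $|t_2-t_1|$ small, but the increment of $\psi(Z^{\CK}_n)$ is exactly the quantity whose smallness you are trying to prove; knowing only that $\psi(Z^{\CK}_n)$ is \emph{uniformly bounded} yields stochastic boundedness of $l_n(T\wedge\tau_{\CK,n})$, not a modulus of continuity for $l_n(\cdot\wedge\tau_{\CK,n})$. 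So the chain ``modulus of $l_n$ $\Rightarrow$ modulus of the reflection term $\Rightarrow$ tightness of $\psi(Z^{\CK}_n)$'' is circular: the first arrow needs the conclusion. Your fallback branch for the region away from $\pa D_0$ (control $\phi_0(Z^{\CK}_n)$ through the coordinatewise modulus of $Z^{\CK}_n$) funnels through the same problem, since the coordinates of $Z^{\CK}_n$ contain $L_n=\int r_n\,\md l_n$. The paper's own ordering confirms that the dependence runs the other way: tightness of $l_n$ (Lemma~\ref{lemma:tight-l}) is \emph{deduced from} Lemma~\ref{lemma:tight-phi}, not used to prove it.

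The paper avoids It\^o and any a priori control of $l_n$ by a pathwise excursion argument. Since $D_n\Ra D_0$, Lemma~\ref{lemma:equiv}(iii) gives $\min_{x\in\ol{D}_n\cap\CK}\phi_0(x)\ge-\de$ for large $n$. On the event $\oa(\phi_0(Z^{\CK}_n),[0,T],\eps)\ge3\de$ one can then find, by continuity, times $s_0,s_1$ with $|s_0-s_1|\le\eps$ such that $\phi_0(Z_n(s))\ge\de$ between $s_0$ and $s_1$ (so, for large $n$, $Z_n$ stays off $\pa D_n$ there and the reflection term is constant on that stretch) while $\phi_0(Z_n(s_1))-\phi_0(Z_n(s_0))\ge\de$; the $1$-Lipschitz property of $\phi_0$ then forces $\norm{V_n(s_1)-V_n(s_0)}\ge\de$, and tightness of $(V_n(\cdot\wedge\tau_{\CK,n}))$ (bounded coefficients on $\CK$, Lemma~\ref{lemma:sup-g-sigma}) makes this event asymptotically negligible. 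If you want to salvage an It\^o-based route, you must exploit the one-sided structure rather than a bound on $l_n$: the reflection contribution is nondecreasing and can increase only at times when $\phi_0(Z_n)$ is within $\sup_{\CK}|\phi_n-\phi_0|\to0$ of zero, so a Skorokhod-map-type oscillation estimate, combined with the lower bound $\phi_0(Z^{\CK}_n)\ge-\de$, bounds the upward oscillation of $\psi(Z^{\CK}_n)$ by the oscillation of the drift and martingale parts plus $o(1)$, the downward oscillation being controlled directly; as written, your proposal does not make this argument.
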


Now, we can split $Z^{\CK}_n$ into two components:
\begin{equation}
\label{eq:Z=V+L}
Z^{\CK}_n(t) \equiv Z_n\left(t\wedge\tau_{\CK, n}\right) = V_n(t) + L_n(t),
\end{equation}
where for $n  =1, 2, \ldots$ and $t \in [0, T]$ we define:
$$
W_n^{\CK}(t) = W_n\left(t\wedge\tau_{\CK, n}\right),
$$
$$
V_n(t) := z_n + \int_0^{t\wedge\tau_{\CK, n}}g_n(Z_n(s))\md s + \int_0^{t\wedge\tau_{\CK, n}}\si_n(Z_n(s))\md W_n(s),
$$
$$
L_n(t) := \int_0^{t\wedge\tau_{\CK, n}}r_n(Z_n(s))\md l_n(s),\ \ \mbox{and}\ \ l_n^{\CK}(t) = l_n\left(t\wedge\tau_{\CK, n}\right),
$$
and $l_n$ is the process $l$ from Definition~\ref{defn:main} for the reflected diffusion $Z_n$ in place of $Z$. 

\begin{lemma} 
\label{lemma:tight-V}
The sequence $(V_n)_{n \ge 1}$ is tight in $C[0, T]$. 
\end{lemma}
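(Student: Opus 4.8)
The plan is to exploit that on $[0,\tau_{\CK,n}]$ the stopped process $Z_n$ takes values in the fixed compact set $\CK$, so that the drift and diffusion coefficients appearing in $V_n$ are \emph{uniformly} (and deterministically) bounded; tightness of $(V_n)$ then follows from a standard increment-moment estimate. Note that $V_n$ contains no reflection term, so this lemma is insensitive to the boundary behaviour of $Z_n$ and to Lemmas~\ref{lemma:tight-phi} and following.

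\textbf{Step 1 (the crux).} I would first show that there are constants $M_g,M_\si<\infty$ and an index $n_0$ such that $\norm{g_n(x)}\le M_g$ and $\norm{\si_n(x)}\le M_\si$ for every $n\ge n_0$ and every $x\in\CK\cap\ol D_n$. Argue by contradiction: if, say, there is a subsequence and points $x_{n_k}\in\CK\cap\ol D_{n_k}$ with $\norm{g_{n_k}(x_{n_k})}\to\infty$, then by compactness of $\CK$ we may assume $x_{n_k}\to x_0\in\CK$; by Corollary~\ref{cor:simple}(i) we get $x_0\in\ol D_0$, which is the domain of $g_0$, and then $g_n\Ra g_0$ in the sense of Definition~\ref{defn:uniform-conv}(i) forces $g_{n_k}(x_{n_k})\to g_0(x_0)$, a contradiction. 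The same argument handles $\si_n$, after using Remark~\ref{rmk:sqrt} to replace $A_n\Ra A_0$ by $\si_n\Ra\si_0$. I expect this to be the only non-routine step: it is precisely where the ``varying-domain'' convergence hypotheses $D_n\Ra D_0$, $g_n\Ra g_0$, $A_n\Ra A_0$ enter.

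\textbf{Step 2.} Given Step 1, write $V_n(t)=z_n+B_n(t)+M_n(t)$, where $B_n(t)=\int_0^{t\wedge\tau_{\CK,n}}g_n(Z_n(s))\,\md s$ and $M_n(t)=\int_0^{t\wedge\tau_{\CK,n}}\si_n(Z_n(s))\,\md W_n(s)$. For $n\ge n_0$, $B_n$ is Lipschitz in $t$ with deterministic constant $M_g$, while $M_n$ is a continuous, square-integrable $\BR^d$-valued martingale on $[0,T]$ whose quadratic variation $\langle M_n\rangle_t-\langle M_n\rangle_s$ equals $\int_{s\wedge\tau_{\CK,n}}^{t\wedge\tau_{\CK,n}}A_n(Z_n(u))\,\md u$ and has trace at most $dM_\si^2\,|t-s|$. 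By the Burkholder--Davis--Gundy inequality, $\EE\norm{M_n(t)-M_n(s)}^4\le C\,|t-s|^2$ with $C$ independent of $n$; combining, $\EE\norm{V_n(t)-V_n(s)}^4\le C'\,|t-s|^2$ for all $0\le s\le t\le T$ and $n\ge n_0$.

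\textbf{Step 3.} Finally, $z_n\to z_0$ makes the laws of $V_n(0)=z_n$ a tight family, and the increment bound of Step 2 is exactly the hypothesis of the Kolmogorov tightness criterion (exponent $4$, power $2$) in $C([0,T],\BR^d)$; hence $(V_n)_{n\ge n_0}$ is tight, and adjoining the finitely many remaining laws $V_1,\dots,V_{n_0-1}$, each of which is tight as a single Borel measure on a Polish space, preserves tightness. The only other things to verify are the routine progressive measurability of $s\mapsto g_n(Z_n(s))$ and $s\mapsto\si_n(Z_n(s))$ and the well-definedness of the stopped stochastic integrals on $[0,T]$, both of which follow from the same uniform bounds.
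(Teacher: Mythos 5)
Your proof is correct, and its crux coincides with the paper's: the uniform bound $\norm{g_n(x)}\le M_g$, $\norm{\si_n(x)}\le M_\si$ for $x\in\CK\cap\ol D_n$, $n\ge n_0$, is exactly Lemma~\ref{lemma:sup-g-sigma}, and your contradiction argument (compactness of $\CK$, Corollary~\ref{cor:simple}(i) to place the limit point in $\ol D_0$, then Definition~\ref{defn:uniform-conv}(i) together with Remark~\ref{rmk:sqrt}) is the same argument the paper gives for that lemma. Where you diverge is only in how tightness is extracted from these bounds: the paper treats the two pieces separately, invoking the Arzela--Ascoli criterion for the drift integral (uniformly Lipschitz paths) and citing Lemma 6.4 of \cite{MyOwn8} for the stopped stochastic integral, whose quadratic variation has a uniformly bounded time derivative; you instead fold both pieces into a single Burkholder--Davis--Gundy fourth-moment increment estimate $\EE\norm{V_n(t)-V_n(s)}^4\le C|t-s|^2$ and conclude by Kolmogorov's tightness criterion, adjoining the finitely many laws with $n<n_0$ at the end. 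Your route is self-contained (no appeal to the external martingale-tightness lemma) at the cost of a moment computation, while the paper's is shorter given the cited lemma; both are valid, and your observation that the reflection term and the preceding Lemma~\ref{lemma:tight-phi} play no role here matches the paper's proof as well.
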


\begin{lemma}
\label{lemma:tight-l}
The sequence $(l_n)_{n \ge 1}$ is tight in $C[0, T]$. 
\end{lemma}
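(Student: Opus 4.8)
The plan is to obtain $l_n^\CK(t):=l_n(t\wedge\tau_{\CK,n})$ explicitly from an Itô expansion of a smoothed signed-distance function, and then to exploit the fact that the pushing term grows only while $Z_n$ sits on $\pa D_n$. First I would fix the localizing compact set $\CK\subseteq\BR^d\setminus\CV$ and recall (the Remark after Lemma~\ref{lemma:local}) that for $n\ge n_\CK$ the stopped process $Z^\CK_n$ never meets $\CV_n$; since a finite family is automatically tight it suffices to treat $n\ge n_\CK$, and along the visited part of $\pa D_n$ the normal $\fn_n$, the reflection $r_n$ with $\fn_n\cdot r_n\equiv 1$, and the signed distance $\phi_n$ are all available. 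For each such $n$ I would choose $\psi_n\in C^2(\BR^d)$ agreeing with $\phi_n$ on a fixed-width tube around $\pa D_n\cap\CK$, with $\psi_n\ge 0$ on $\ol D_n$, and with $\nabla\psi_n$, $D^2\psi_n$ and $\si_n'\nabla\psi_n$ bounded on $\CK$ uniformly in $n$ (turning compactness of $\CK$ into uniform bounds via $D_n\Ra D_0$, $g_n\Ra g_0$, $A_n\Ra A_0$ and local boundedness of $g_0,A_0$). Abbreviating $\CL_n\psi_n:=g_n\cdot\nabla\psi_n+\tfrac12\tr(A_nD^2\psi_n)$ and applying Itô's formula to $\psi_n(Z^\CK_n(t))$, the reflection integral is $\int_0^{t\wedge\tau_{\CK,n}}\nabla\psi_n(Z_n(s))\cdot r_n(Z_n(s))\,\md l_n(s)$, which — because $\md l_n$ lives on $\{Z_n\in\pa D_n\}$, where $\nabla\psi_n=\fn_n$ and $\fn_n\cdot r_n=1$ — equals $l_n^\CK(t)$; hence
\begin{equation}
\label{eq:l-repr}
l_n^\CK(t)=\psi_n(Z^\CK_n(t))-\psi_n(z_n)-\int_0^{t\wedge\tau_{\CK,n}}\CL_n\psi_n(Z_n(s))\,\md s-N_n(t),
\end{equation}
where $N_n(t):=\int_0^{t\wedge\tau_{\CK,n}}\nabla\psi_n(Z_n(s))'\si_n(Z_n(s))\,\md W_n(s)$ is a continuous martingale with $\langle N_n\rangle_t\le C\,t$, $C$ independent of $n$.

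From \eqref{eq:l-repr} one reads off two things. Taking expectations kills $N_n$ and the uniform bounds give $\sup_n\EE[l_n^\CK(T)]<\infty$, so $\{l_n^\CK(t)\}_n$ is tight (indeed uniformly integrable) for each $t$. For the modulus of continuity I would fix a mesh-$\delta$ partition $0=t_0<\dots<t_N=T$ and use, since $l_n^\CK$ is nondecreasing, that $\oa(l_n^\CK,[0,T],\delta)\le 2\max_j\bigl(l_n^\CK(t_{j+1})-l_n^\CK(t_j)\bigr)$. When $l_n^\CK(t_{j+1})>l_n^\CK(t_j)$, let $u$ be the last time in $(t_j,t_{j+1}]$ before $\tau_{\CK,n}$ with $Z_n(u)\in\pa D_n$; then $l_n^\CK(t_{j+1})=l_n^\CK(u)$ ($l_n^\CK$ is constant on $[u,t_{j+1}]$), while $\psi_n(Z^\CK_n(u))=\phi_n(Z_n(u))=0$ and $\psi_n(Z^\CK_n(t_j))\ge 0$, so \eqref{eq:l-repr} applied at $u$ and at $t_j$ yields
\[
l_n^\CK(t_{j+1})-l_n^\CK(t_j)=l_n^\CK(u)-l_n^\CK(t_j)\le-\psi_n(Z^\CK_n(t_j))+C\delta+\bigl|N_n(u)-N_n(t_j)\bigr|\le C\delta+\oa(N_n,[0,T],\delta),
\]
whence $\oa(l_n^\CK,[0,T],\delta)\le 2C\delta+2\oa(N_n,[0,T],\delta)$. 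Since $\langle N_n\rangle$ has density bounded uniformly in $n$, Dambis--Dubins--Schwarz represents $N_n$ as a Brownian motion on a uniformly Lipschitz clock, so $\lim_{\delta\to0}\limsup_n\MP\bigl(\oa(N_n,[0,T],\delta)>\eps\bigr)=0$ for every $\eps>0$; together with $l_n^\CK(0)=0$ this is the Arzel\`a--Ascoli/Prokhorov tightness criterion, and since $\CK$ was arbitrary the lemma follows.

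The structural part — the representation \eqref{eq:l-repr}, the first-moment bound, and the modulus estimate, which the observation ``$l_n$ grows only on $\{\phi_n(Z_n)=0\}$ and $\phi_n(Z_n)\ge 0$'' renders nearly automatic — is robust. The hard part will be the uniform-in-$n$ control of $\psi_n$ on $\CK$, above all a uniform bound on the Hessians of the signed-distance functions $\phi_n$, equivalently on the curvatures of $\pa D_n\cap\CK$; this must be extracted from $D_n\Ra D_0$ together with $r_n\Ra r_0$ and $r_n\cdot\fn_n\equiv1$. Should a clean such bound be unavailable, the alternative is to run \eqref{eq:l-repr} with mollifications of $\phi_n$ at a scale tending to $0$ and to absorb the extra error terms — exactly the technique of the tightness proofs in \cite{MyOwn8} that this section follows.
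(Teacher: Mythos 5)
There is a genuine gap, and it sits exactly where you flagged it: your argument needs, for each $n$, a $C^2$ extension $\psi_n$ of the signed distance $\phi_n$ to $\pa D_n$ on a tube of \emph{fixed} width around $\pa D_n\cap\CK$, with Hessians bounded \emph{uniformly in $n$}. Nothing in the hypotheses of Theorem~\ref{thm:main} delivers this. Weak convergence $D_n\Ra D_0$ is only locally uniform convergence of the signed distances; it gives no control on the curvature or the reach of $\pa D_n$. For instance, $D_n=\{(x,y)\in\BR^2\mid y>n^{-1}\sin(n^2x)\}$ converges weakly to the upper half-plane with smooth boundaries $\pa D_n$, yet the curvature of $\pa D_n$ blows up like $n^3$ and the width of the tube on which $\phi_n$ is $C^2$ shrinks to $0$; no uniform choice of $\psi_n$ with uniformly bounded $D^2\psi_n$ exists. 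Your proposed fallback (mollify $\phi_n$ at a scale tending to $0$) does not obviously repair this: the second-derivative error terms then blow up and must be integrated against the occupation of $Z_n$ near $\pa D_n$, i.e.\ against the very quantity $l_n$ you are trying to control, and you give no estimate for that. So the representation \eqref{eq:l-repr}, on which both your moment bound and your modulus estimate rest, is not justified under the stated assumptions.

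The paper avoids this entirely by running It\^o's formula not with $\phi_n$ but with $\psi\circ\phi_0$, where $\phi_0$ is the signed distance to the \emph{limit} boundary $\pa D_0$ and $\psi$ is a fixed one-dimensional cutoff: by Lemma~\ref{lemma:aux} this is a single $C^2$ function on a fixed tube $\CK'$, so no uniform-in-$n$ regularity of $\pa D_n$ is needed. The price is that on $\pa D_n\cap\CK$ the integrand in the $\md l_n$ term is $\nabla\phi_0\cdot r_n$ rather than identically $1$; the paper shows (Lemma~\ref{lemma:eps-0}, using $D_n\Ra D_0$, $r_n\Ra r_0$ and $r_0\cdot\fn>0$) that it is bounded below by some $\eps_0>0$ for large $n$, proves the $\md l_n$ integral $N_n$ is tight as a difference of tight sequences (drift, martingale and $\psi(\phi_0(Z_n^{\CK}))$ terms), and then recovers tightness of $l_n$ by inverting the integrand, $\md l_n=[\nabla\phi_0\cdot r_n]^{-1}\md N_n\le\eps_0^{-1}\md N_n$. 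Your monotonicity/modulus-of-continuity bookkeeping and the exact normalization $\fn_n\cdot r_n=1$ would indeed give a clean proof in situations where uniform boundary regularity is available (fixed domain, or polyhedra with fixed geometry), but as a proof of Lemma~\ref{lemma:tight-l} under the theorem's hypotheses the key idea --- work with the limit domain's distance function and settle for a uniform lower bound $\eps_0$ instead of exact normalization --- is missing.
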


\begin{lemma}
\label{lemma:tight-L}
The sequence $(L_n)_{n \ge 1}$ is tight in $C[0, T]$. 
\end{lemma}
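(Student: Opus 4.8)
The plan is to control each increment of $L_n$ by the corresponding increment of the (localized) boundary-time process and then invoke the tightness of $(l_n)_{n\ge1}$ from Lemma~\ref{lemma:tight-l}. First I would record, using the Remark following Lemma~\ref{lemma:local}, that there is an $n_{\CK}$ such that for all $n \ge n_{\CK}$ the stopped process $Z_n^{\CK}$ never hits $\CV_n$; since moreover $Z_n(u) \in \CK$ for every $u \le \tau_{\CK, n}$ and $l_n$ can increase only when $Z_n \in \pa D_n$, the integrand $r_n(Z_n(u))$ defining $L_n$ is, for $n \ge n_{\CK}$, always evaluated at a point of $(\pa D_n\setminus\CV_n)\cap\CK$, where $r_n$ is defined and continuous. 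The crux is therefore the uniform bound
$$
M_{\CK} := \sup_{n \ge n_{\CK}}\ \sup_{x \in (\pa D_n\setminus\CV_n)\cap\CK}\norm{r_n(x)} < \infty.
$$

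To prove $M_{\CK} < \infty$ I would argue by contradiction: if it fails, there are $n_k \ge n_{\CK}$ and $x_{n_k} \in (\pa D_{n_k}\setminus\CV_{n_k})\cap\CK$ with $\norm{r_{n_k}(x_{n_k})}\to\infty$. By compactness of $\CK$, pass to a subsequence along which $x_{n_k}\to x_0 \in \CK$; by Corollary~\ref{cor:simple}(i) (equivalently Lemma~\ref{lemma:equiv}(vii)) we have $x_0 \in \pa D_0$, and since $\CK\cap\CV_0 = \varnothing$ in fact $x_0 \in \pa D_0\setminus\CV_0$, a point at which $r_0$ is defined and locally bounded. Then $r_n \Ra r_0$ in the sense of Definition~\ref{defn:uniform-conv}(i) (applied with $E_n = \pa D_n\setminus\CV_n$, $E_0 = \pa D_0\setminus\CV_0$) forces $r_{n_k}(x_{n_k}) \to r_0(x_0)$, which is finite --- contradicting $\norm{r_{n_k}(x_{n_k})}\to\infty$. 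Hence $M_{\CK} < \infty$.

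With this in hand, for $n \ge n_{\CK}$ and $0 \le s \le t \le T$ with $t - s \le \de$,
$$
\norm{L_n(t) - L_n(s)} = \norm{\int_{s\wedge\tau_{\CK, n}}^{t\wedge\tau_{\CK, n}} r_n(Z_n(u))\,\md l_n(u)} \le M_{\CK}\,\bigl(l_n^{\CK}(t) - l_n^{\CK}(s)\bigr) \le M_{\CK}\,\oa\bigl(l_n, [0, T], \de\bigr),
$$
using that $l_n^{\CK}$ is nondecreasing and that $u \mapsto u\wedge\tau_{\CK,n}$ is $1$-Lipschitz. Therefore $\oa(L_n, [0, T], \de) \le M_{\CK}\,\oa(l_n, [0, T], \de)$ for all $n \ge n_{\CK}$, while $L_n(0) = 0$. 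Since $(l_n)_{n\ge1}$ is tight in $C[0,T]$ by Lemma~\ref{lemma:tight-l}, it satisfies $\lim_{\de\downarrow 0}\limsup_{n\to\infty}\MP\bigl(\oa(l_n, [0, T], \de) > \eta\bigr) = 0$ for every $\eta > 0$, and the displayed bound transfers this property to $(L_n)_{n\ge1}$. Combining this with $L_n(0) = 0$ and the trivial individual tightness of the finitely many remaining $L_n$ with $n < n_{\CK}$, the standard modulus-of-continuity criterion for tightness in $C([0,T],\BR^d)$ yields that $(L_n)_{n\ge1}$ is tight.

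I expect the main obstacle to be the uniform bound $M_{\CK} < \infty$: the reflection fields $r_n$ are defined on different boundaries $\pa D_n$, so one cannot merely quote local boundedness of a single limiting field, and the argument must marry the locally uniform convergence $r_n \Ra r_0$ with the convergence $\pa D_n \to \pa D_0$ (via Corollary~\ref{cor:simple}) to preclude blow-up of $\norm{r_n}$ along boundary points that remain in $\CK$. Everything after that reduces mechanically to the already-established tightness of $(l_n)_{n\ge1}$.
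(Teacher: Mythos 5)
Your proposal is correct and takes essentially the same route as the paper: the uniform bound $M_{\CK}$ that you establish by compactness and the convergence $r_n \Ra r_0$ is exactly the paper's Lemma~\ref{lemma:sup-g-sigma} (the constant $C_r$), and the paper likewise deduces tightness of $(L_n)_{n \ge 1}$ from this bound on $r_n(Z_n(\cdot))$ together with the fact that $l_n$ increases only on $\pa D_n$ and the tightness of $(l_n)_{n \ge 1}$ from Lemma~\ref{lemma:tight-l}.
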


The sequence $(W_n)_{n \ge 1}$ of Brownian motions is obviously tight in $C([0, T], \BR^d)$ (all Brownian motions $W_n$ have the same distribution). Because each $W_n^{\CK}$ is a Brownian motion $W_n$ stopped when it exits $\CK$, the sequence $(W_n^{\CK})_{n \ge 1}$ is also tight in $C([0, T], \BR^d)$. Using Lemmata~\ref{lemma:tight-V}, ~\ref{lemma:tight-l} and~\ref{lemma:tight-L}, take a weak limit point $\left(\ol{V}, \ol{L}, \ol{l}, \ol{W}\right)'$ of the sequence 
$$
\left(V_n,  l^{\CK}_n, L^{\CK}_n, W^{\CK}_n\right)'.
$$
We have: for some subsequence $(n_k)_{k \ge 1}$, 
\begin{equation}
\label{eq:conv}
\left(V_{n_k}, l^{\CK}_{n_k}, L^{\CK}_{n_k}, W^{\CK}_{n_k}\right) \Ra \left(\ol{V}, \ol{L}, \ol{l}, \ol{W}\right)'.
\end{equation}
By Skorohod representation theorem, see for example \cite[Chapter 1]{IWBook}, we can assume that the convergence is a.s. on a common probability space. From~\eqref{eq:Z=V+L}, we have:
$$
\ol{Z}(t) := \ol{V}(t) + \ol{L}(t) = \lim\limits_{k \to \infty}Z^{\CK}_{n_k}(t),
$$
where the convergence is uniform on $[0, T]$. %Let us state two lemmata about properties of the limit points $\ol{W}$ and $\ol{V}$,
%which corresponds to the drift and diffusion parts of $\ol{Z}$. 

\begin{lemma} The process $\ol{W}$ is a $d$-dimensional Brownian motion (with zero drift vector and identity covariance matrix), at least until the stopping time
$\ol{\tau}_{\CK} := \inf\{t \ge 0\mid \ol{Z}(t) \notin \Int\CK\}$. In addition, $\ol{\tau}_{\CK} \le \varliminf\limits_{k \to \infty}\tau_{\CK, k}$ a.s. 
\label{lemma:rep-W}
\end{lemma}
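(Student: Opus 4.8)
The plan is to show that $\ol W$ is a Brownian motion by transferring the martingale characterization from the prelimit processes $W_n^{\CK}$. First I would recall that each $W_n$ is, by Definition~\ref{defn:main}, a genuine $d$-dimensional standard Brownian motion relative to the filtration generated by $(Z_n, l_n, W_n)$, so in particular for every $\theta \in \BR^d$ the process $M_n^\theta(t) := \exp\bigl(\theta\cdot W_n(t) - \tfrac12\norm{\theta}^2 t\bigr)$ is a martingale; equivalently, $W_n(t)$ and $W_n(t)W_n(t)' - tI_d$ are martingales in the appropriate matrix sense. Stopping at $\tau_{\CK,n}$, the stopped process $W_n^{\CK}$ is a martingale (with the stopped quadratic variation $(t\wedge\tau_{\CK,n})I_d$). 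Along the Skorohod-coupled subsequence, $(V_{n_k}, l_{n_k}^{\CK}, L_{n_k}^{\CK}, W_{n_k}^{\CK})$ converges a.s. uniformly on $[0,T]$ to $(\ol V, \ol l, \ol L, \ol W)$, and hence $Z_{n_k}^{\CK}\to \ol Z = \ol V + \ol L$ uniformly. The point is to pass these martingale relations to the limit.

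The key steps, in order, are: (1) Fix $0 \le s < t \le T$ and a bounded continuous functional $h$ of the coordinate paths up to time $s$; we want $\EE\bigl[h\cdot(\ol W(t) - \ol W(s))\bigr] = 0$ and the analogous identity for the second moments with $(t-s)I_d$, but \emph{only up to the exit time $\ol\tau_{\CK}$}, so I would insert the indicator $1\{s < \ol\tau_{\CK}\}$ (or work with the stopped process $\ol W(\cdot\wedge\ol\tau_{\CK})$). (2) Establish the auxiliary fact $\ol\tau_{\CK} \le \varliminf_k \tau_{\CK,k}$ a.s.: since $Z_{n_k}^{\CK}\to\ol Z$ uniformly, if $\ol Z(t)\notin\Int\CK$ for some $t$, then $Z_{n_k}^{\CK}(t)$ is eventually outside any slightly smaller compact neighborhood, forcing $\tau_{\CK,k}\le t$ for large $k$ — here I would use that $\CK$ is compact and $\Int\CK$ open, plus a standard $\eps$-argument; care is needed because the $\tau_{\CK,k}$ are defined by exiting $\Int\CK$ while $\ol\tau_{\CK}$ by exiting $\Int\CK$ as well, so the inequality is the "natural" direction and the reverse may fail. (3) For $t < \ol\tau_{\CK}$, on the event in question we have $t < \tau_{\CK,k}$ for all large $k$ by step (2), so $W_{n_k}^{\CK}(t) = W_{n_k}(t)$ and the martingale identity for $W_{n_k}$ applies directly; then pass to the limit using a.s. uniform convergence and dominated convergence (the exponential martingales $M_{n_k}^\theta$ are bounded on $[0,T]$ for fixed $\theta$, or one truncates $\ol W$). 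This yields that $\ol W(\cdot\wedge\ol\tau_{\CK})$ is a continuous martingale with quadratic covariation $(\cdot\wedge\ol\tau_{\CK})I_d$, whence by Lévy's characterization it agrees in law with a Brownian motion stopped at $\ol\tau_{\CK}$.

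The main obstacle I anticipate is the bookkeeping around the stopping times: the limiting process $\ol W$ is only asserted to be a Brownian motion \emph{until} $\ol\tau_{\CK}$, and one must be careful that the conditioning functional $h$ and the martingale increment are both evaluated on the event $\{s < \ol\tau_{\CK}\}$, which is $\CF_s$-measurable only after one checks that $\ol\tau_{\CK}$ is a stopping time for the limiting filtration — this requires knowing that $\ol Z$ is adapted, which follows from the a.s. convergence, and that $\{\ol\tau_{\CK} > s\}$ can be written in terms of the path of $\ol Z$ on $[0,s]$ up to a null set (true because exiting the \emph{open} set $\Int\CK$ is detected by the sup over rationals). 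A secondary subtlety is that the prelimit filtrations vary with $n$, so strictly one should phrase everything via the expectation identities with bounded continuous $h$ (which are stable under weak/Skorohod convergence) rather than via conditional expectations; this is the standard device and I would invoke it rather than belabor it. The inequality $\ol\tau_{\CK}\le\varliminf_k\tau_{\CK,k}$ in step (2) is elementary but genuinely needed to justify the substitution $W_{n_k}^{\CK}(t)=W_{n_k}(t)$ for $t<\ol\tau_{\CK}$.
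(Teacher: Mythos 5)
The paper does not actually reprove this lemma here --- it cites Lemma 4.5 of \cite{MyOwn8} --- but your route (transfer the martingale property from the stopped Brownian motions $W_{n_k}^{\CK}$ through the Skorokhod coupling, then invoke L\'evy's characterization up to $\ol{\tau}_{\CK}$) is the standard argument and, in spirit, the same one. The bookkeeping you flag at the end is indeed the right device: as literally written, ``the martingale identity for $W_{n_k}$ applies directly'' on the random event $\{t < \tau_{\CK,k}\ \text{for all large}\ k\}$ is not justified, and one should instead pass to the limit in expectation identities against bounded continuous functionals of the paths up to time $s$ (adding uniform integrability of the increments, e.g.\ from Gaussian moments), and only afterwards localize at $\ol{\tau}_{\CK}$, which is a stopping time for the limiting filtration.

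There is, however, a genuine flaw in your step (2) as sketched. To prove $\ol{\tau}_{\CK} \le \varliminf_k \tau_{\CK,k}$ you argue: if $\ol{Z}(t) \notin \Int\CK$, then $Z_{n_k}^{\CK}(t)$ eventually leaves a slightly smaller set, ``forcing $\tau_{\CK,k} \le t$.'' This is the wrong implication for the claimed inequality (it would yield $\varlimsup_k \tau_{\CK,k} \le \ol{\tau}_{\CK}$), and it is false: if $\ol{Z}$ merely touches $\pa(\Int\CK)$ at time $t$ while every prelimit path stays strictly inside $\Int\CK$, no $\tau_{\CK,k}$ is $\le t$; leaving a smaller compact set is not leaving $\Int\CK$. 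The correct elementary argument goes the other way: fix $t < \ol{\tau}_{\CK}$; then $\ol{Z}([0,t])$ is a compact subset of the open set $\Int\CK$, hence at positive distance $\de > 0$ from $(\Int\CK)^c$; by a.s.\ uniform convergence, for large $k$ we have $\sup_{s \le t}\norm{Z_{n_k}^{\CK}(s) - \ol{Z}(s)} < \de$, so the stopped path stays in $\Int\CK$ on $[0,t]$, and since $Z_{n_k}(\tau_{\CK,k}) \notin \Int\CK$ whenever $\tau_{\CK,k} \le T$, this forces $\tau_{\CK,k} > t$; hence $\varliminf_k \tau_{\CK,k} \ge t$ for every $t < \ol{\tau}_{\CK}$, which is the inequality. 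Keep in mind that only this one-sided bound holds ($\tau_{\CK,k}$ need not converge to $\ol{\tau}_{\CK}$), so when identifying the bracket you should only claim $\dd\langle \ol{W}_i, \ol{W}_j\rangle_t = \de_{ij}\,\dd t$ for $t < \ol{\tau}_{\CK}$, exactly as the statement of the lemma allows.
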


Lemma~\ref{lemma:rep-W} was proved as Lemma 4.5 in \cite{MyOwn8}. 

\begin{lemma}
\label{lemma:rep-V}
For $t \in [0, \ol{\tau}_{\CK}]$, 
$$
\ol{V}(t) = z_0 + \int_0^tg_0(\ol{Z}(s))\md s + \int_0^t\si_0(\ol{Z}(s))\md \ol{W}(s).
$$
\end{lemma}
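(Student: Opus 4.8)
The plan is to pass to the limit along the subsequence $(n_k)$ in the defining identity
$$
V_{n_k}(t) = z_{n_k} + \int_0^{t\wedge\tau_{\CK, n_k}}g_{n_k}(Z_{n_k}(s))\,\md s + \int_0^{t\wedge\tau_{\CK, n_k}}\si_{n_k}(Z_{n_k}(s))\,\md W_{n_k}(s),
$$
using the a.s.\ uniform convergence secured via Skorohod representation in~\eqref{eq:conv}, namely $V_{n_k}\to\ol V$, $W^{\CK}_{n_k}\to\ol W$ and (from $Z^{\CK}_{n_k}=V_{n_k}+L^{\CK}_{n_k}$) $Z^{\CK}_{n_k}\to\ol Z$, all uniformly on $[0,T]$. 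The left-hand side converges to $\ol V(t)$ and $z_{n_k}\to z_0$ by hypothesis, so the work is entirely on the two integral terms, and we restrict attention to $t\le\ol\tau_{\CK}$, where by Lemma~\ref{lemma:rep-W} we have $\ol\tau_{\CK}\le\varliminf_k\tau_{\CK,k}$, so that for large $k$ the integration is effectively over $[0,t]$ and the stopped Brownian motions $W^{\CK}_{n_k}$ agree with honest Brownian motions up to time $t$.

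For the drift term: on a fixed compact time interval the paths $Z^{\CK}_{n_k}$ take values in $\CK$, a fixed compact subset of $\BR^d\setminus\CV_0$; since $g_n\Ra g_0$ in the sense of Definition~\ref{defn:uniform-conv}(ii) and $g_0$ is locally bounded (hence bounded on $\CK$), we get $g_{n_k}(Z_{n_k}(\cdot))\to g_0(\ol Z(\cdot))$ uniformly on $[0,T]$, and together with $t\wedge\tau_{\CK,n_k}\to t$ for $t\le\ol\tau_{\CK}$, the drift integral converges to $\int_0^t g_0(\ol Z(s))\,\md s$ by dominated convergence. The same argument, via $\si_n\Ra\si_0$ (equivalently $A_n\Ra A_0$, by Remark~\ref{rmk:sqrt}), shows the integrands $\si_{n_k}(Z_{n_k}(\cdot))$ converge uniformly on $[0,T]$ to $\si_0(\ol Z(\cdot))$, which is bounded.

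The main obstacle is the stochastic integral term: from $\si_{n_k}(Z_{n_k}(\cdot))\to\si_0(\ol Z(\cdot))$ uniformly and $W^{\CK}_{n_k}\to\ol W$ uniformly, one cannot naively conclude $\int_0^{\cdot}\si_{n_k}(Z_{n_k})\,\md W_{n_k}\to\int_0^{\cdot}\si_0(\ol Z)\,\md\ol W$, because stochastic integration is not continuous under uniform convergence of the integrator. The standard way around this — and the step I expect to require the most care — is to invoke a stability theorem for stochastic integrals under weak convergence in the spirit of Kurtz--Protter / Jakubowski--M\'emin--Pages: the pairs $(\si_{n_k}(Z_{n_k}(\cdot)), W_{n_k})$ converge in law (indeed a.s., after Skorohod) as processes, each $W_{n_k}$ is a martingale with respect to its own filtration with uniformly controlled quadratic variation, so the ``uniform tightness'' (UT/UCV) condition holds trivially, and the stochastic integrals converge. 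Since the limit $\ol W$ is a Brownian motion up to $\ol\tau_{\CK}$ by Lemma~\ref{lemma:rep-W} and $\ol Z$ is adapted, the limiting integral $\int_0^t\si_0(\ol Z(s))\,\md\ol W(s)$ is well defined on $[0,\ol\tau_{\CK}]$, and assembling the three limits yields the claimed identity. This is precisely the argument carried out as Lemma~4.6 in \cite{MyOwn8}, and I would cite it there while spelling out the adaptations needed because here the covariance fields $\si_n$ themselves vary with $n$.
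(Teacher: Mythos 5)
Your plan matches the paper's own proof essentially step for step: after Skorohod representation, the uniform convergence $g_{n_k}(Z^{\CK}_{n_k})\to g_0(\ol Z)$ and $\si_{n_k}(Z^{\CK}_{n_k})\to\si_0(\ol Z)$ follows from Definition~\ref{defn:uniform-conv} together with Remark~\ref{rmk:sqrt}, the drift integral is handled elementarily with the stopping-time comparison from Lemma~\ref{lemma:rep-W}, and the stochastic integral is treated by exactly the Kurtz--Protter stability result the paper records as Lemma~\ref{lemma:conv-of-stochastic-integrals} (passing to a further subsequence for a.s.\ convergence). So the proposal is correct and takes essentially the same route as the paper.
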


Now, let us state two lemmata which deal with the reflection terms. 

\begin{lemma}
\label{lemma:rep-l}
On the interval $[0, \ol{\tau}_{\CK}]$, the process $\ol{l}$ is continuous, nondecreasing, can increase only when $\ol{Z} \in \pa D_0$, and $\ol{l}(0) = 0$. 
\end{lemma}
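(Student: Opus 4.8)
\emph{The plan.} The properties that $\ol l$ is continuous, nondecreasing, and satisfies $\ol l(0)=0$ are immediate from the Skorohod coupling in~\eqref{eq:conv}: there $\ol l$ is the a.s.\ uniform limit on $[0,T]$ of the processes $l^{\CK}_{n_k}$, each of which is continuous, nondecreasing, and vanishes at $t=0$ (these are the properties of the process $l_n$ from Definition~\ref{defn:main}, clearly preserved under stopping at $\tau_{\CK,n}$). A uniform limit of continuous functions is continuous, a pointwise limit of nondecreasing functions is nondecreasing, and $\ol l(0)=\lim_k l^{\CK}_{n_k}(0)=0$. These three facts actually hold on all of $[0,T]$; we restrict to $[0,\ol\tau_{\CK}]$ only for the last assertion.

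\emph{Reduction for the support property.} It remains to show that on $[0,\ol\tau_{\CK}]$ the measure $\md\ol l$ is carried by $\{t\mid\ol Z(t)\in\pa D_0\}$, i.e.\ $\int_0^{\ol\tau_{\CK}}1\bigl(\ol Z(t)\in D_0\bigr)\,\md\ol l(t)=0$. Since $D_0$ is open and $\ol Z$ is continuous, the set $G:=\{t\in[0,\ol\tau_{\CK})\mid\ol Z(t)\in D_0\}$ is relatively open, hence a countable disjoint union of relatively open intervals, and the continuous (atomless) nondecreasing function $\ol l$ gives zero mass to the single point $\ol\tau_{\CK}$. So it suffices to prove that $\ol l$ is constant on each such interval, and for that it is enough to fix an arbitrary closed subinterval $[a,b]$ of one of them, note that then $\phi_0(\ol Z(t))\ge\eps$ on $[a,b]$ for some $\eps>0$, and show $\ol l(b)=\ol l(a)$.

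\emph{Transfer through the domain convergence.} Applying Lemma~\ref{lemma:equiv}(v) to the uniformly convergent sequence $Z^{\CK}_{n_k}\to\ol Z$ on $[0,T]$ gives $\phi_{n_k}\bigl(Z^{\CK}_{n_k}(\cdot)\bigr)\to\phi_0\bigl(\ol Z(\cdot)\bigr)$ uniformly on $[0,T]$. Hence for all large $k$ we have $\phi_{n_k}\bigl(Z^{\CK}_{n_k}(t)\bigr)\ge\eps/2>0$ for every $t\in[a,b]$, that is, $Z_{n_k}(t\wedge\tau_{\CK,n_k})\in D_{n_k}$ throughout $[a,b]$. Since the defining process $l_{n_k}$ can increase only when $Z_{n_k}\in\pa D_{n_k}$, and $l^{\CK}_{n_k}$ is frozen after $\tau_{\CK,n_k}$, a short case analysis (according to whether $\tau_{\CK,n_k}$ lies before, inside, or after $[a,b]$) yields $l^{\CK}_{n_k}(b)=l^{\CK}_{n_k}(a)$ for all large $k$. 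Letting $k\to\infty$ gives $\ol l(b)=\ol l(a)$, which is what was needed.

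\emph{Main obstacle.} The substance is entirely in the last paragraph: relating the frozen reflection counter $l^{\CK}_{n_k}$ back to the increase set of the underlying $l_{n_k}$, and — crucially — invoking the weak convergence $D_n\Ra D_0$ in its signed-distance form (Lemma~\ref{lemma:equiv}(v)) to guarantee that staying in the \emph{interior} of $D_0$ in the limit forces $Z^{\CK}_{n_k}$ to remain in the interior of $D_{n_k}$ along $[a,b]$ for large $k$. Everything else is soft: the pathwise reasoning is legitimate because of the Skorohod representation already in force, and continuity, monotonicity, and the initial value pass to the limit automatically.
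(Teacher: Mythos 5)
Your proposal is correct and follows essentially the same route as the paper: the soft properties pass through the a.s.\ Skorohod limit of the nondecreasing continuous processes $l^{\CK}_{n_k}$, and the support property is obtained by using Lemma~\ref{lemma:equiv}(v) to transfer $\phi_0(\ol Z)\ge\eps$ on a subinterval into $\phi_{n_k}(Z^{\CK}_{n_k})>0$ there for large $k$, so that $l_{n_k}$ cannot grow, and then letting $k\to\infty$. The only (immaterial) difference is bookkeeping around the stopping times: you handle the position of $\tau_{\CK,n_k}$ by a direct case analysis on the frozen process, while the paper works on $[0,\ol\tau_{\CK}-\eps]$ using $\ol\tau_{\CK}\le\varliminf_k\tau_{\CK,k}$ from Lemma~\ref{lemma:rep-W} and then removes $\eps$ by a continuity argument.
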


\begin{lemma}
\label{lemma:rep-L}
For $t \in [0, T]$, 
$$
\ol{L}(t) = \int_0^tr_0(\ol{Z}(s))\md \ol{l}(s).
$$
\end{lemma}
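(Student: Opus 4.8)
\textbf{Proof proposal for Lemma~\ref{lemma:rep-L}.}
The plan is to pass to the limit in the Lebesgue--Stieltjes representation
\[
L^{\CK}_{n_k}(t)=\int_0^t r_{n_k}\bigl(Z^{\CK}_{n_k}(s)\bigr)\,\md l^{\CK}_{n_k}(s),
\]
which is legitimate because $l^{\CK}_{n_k}$ is continuous, constant after $\tau_{\CK,n_k}$, and increases only when $Z_{n_k}\in\pa D_{n_k}$; moreover, by the Remark following Lemma~\ref{lemma:local}, for $n_k$ large $Z^{\CK}_{n_k}$ never meets $\CV_{n_k}$, so the integrand is evaluated only where $r_{n_k}$ is defined. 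Since, along our subsequence, $L^{\CK}_{n_k}\to\ol L$ and $l^{\CK}_{n_k}\to\ol l$ a.s.\ uniformly on $[0,T]$ (with $\ol l$ continuous nondecreasing), and $Z^{\CK}_{n_k}\to\ol Z$ a.s.\ uniformly, it suffices to identify $\ol L(t)$ with $\int_0^t r_0(\ol Z(s))\,\md\ol l(s)$.

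\emph{Step 1: replace $r_{n_k}$ by a genuinely continuous field.} The set $\pa D_0\cap\CK$ is compact and, by construction of $\CK$, disjoint from $\CV_0$; hence $r_0$ is continuous on it, and by the Tietze extension theorem there is a continuous $\tilde r_0:\BR^d\to\BR^d$, bounded on $\CK$, with $\tilde r_0=r_0$ on $\pa D_0\cap\CK$. I claim
\[
\Delta_k:=\sup\bigl\{\,\norm{r_{n_k}(x)-\tilde r_0(x)}\;:\;x\in\pa D_{n_k}\cap\CK\,\bigr\}\longrightarrow 0 .
\]
If not, choose $x_{n_k}\in\pa D_{n_k}\cap\CK$ along a further subsequence with $\norm{r_{n_k}(x_{n_k})-\tilde r_0(x_{n_k})}\ge\eps$; by compactness $x_{n_k}\to x_\ast\in\CK$, by Corollary~\ref{cor:simple}(i) $x_\ast\in\pa D_0\cap\CK$ (so $x_\ast\notin\CV_0$), hence $r_{n_k}(x_{n_k})\to r_0(x_\ast)$ by $r_n\Ra r_0$ (Definition~\ref{defn:uniform-conv}(i)), while $\tilde r_0(x_{n_k})\to\tilde r_0(x_\ast)=r_0(x_\ast)$ by continuity --- a contradiction. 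Because $l^{\CK}_{n_k}$ charges only times $s$ with $Z^{\CK}_{n_k}(s)\in\pa D_{n_k}\cap\CK$, and $l^{\CK}_{n_k}(T)\to\ol l(T)<\infty$ is bounded (along a.e.\ $\omega$), we get
\[
\sup_{t\in[0,T]}\Bigl\lVert L^{\CK}_{n_k}(t)-\int_0^t\tilde r_0\bigl(Z^{\CK}_{n_k}(s)\bigr)\,\md l^{\CK}_{n_k}(s)\Bigr\rVert\;\le\;\Delta_k\,l^{\CK}_{n_k}(T)\;\longrightarrow\;0 .
\]

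\emph{Step 2: limit with a continuous integrand, and conclusion.} Since $Z^{\CK}_{n_k}\to\ol Z$ uniformly, $\tilde r_0$ is continuous and bounded on $\CK$, and $l^{\CK}_{n_k}\to\ol l$ uniformly with $\ol l$ continuous nondecreasing (so $\md l^{\CK}_{n_k}\to\md\ol l$ weakly with uniformly bounded total mass), a routine approximation argument gives $\int_0^t\tilde r_0(Z^{\CK}_{n_k}(s))\,\md l^{\CK}_{n_k}(s)\to\int_0^t\tilde r_0(\ol Z(s))\,\md\ol l(s)$ uniformly in $t\in[0,T]$. Combining with Step 1, $\ol L(t)=\int_0^t\tilde r_0(\ol Z(s))\,\md\ol l(s)$. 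Finally $\ol Z(s)\in\CK$ for all $s$ (uniform limit of $Z^{\CK}_{n_k}(s)\in\CK$), and by Lemma~\ref{lemma:rep-l} $\ol l$ increases only when $\ol Z\in\pa D_0$, i.e.\ only on $\pa D_0\cap\CK$, where $\tilde r_0=r_0$; hence $\int_0^t\tilde r_0(\ol Z)\,\md\ol l=\int_0^t r_0(\ol Z)\,\md\ol l$, which is the assertion. (The same compactness argument extends Lemma~\ref{lemma:rep-l} beyond $[0,\ol{\tau}_{\CK}]$ if one wants the identity on all of $[0,T]$; only $[0,\ol{\tau}_{\CK}]$ is needed for Lemma~\ref{lemma:local}.)

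\textbf{Main obstacle.} The crux is Step~1: since the $r_{n_k}$ live on the moving boundaries $\pa D_{n_k}$ while $r_0$ is continuous only off $\CV_0$, one cannot feed the trajectories $Z^{\CK}_{n_k}$ (which leave the boundary) directly into the generalized locally uniform convergence $r_n\Ra r_0$. Routing through a Tietze extension $\tilde r_0$ and establishing $\Delta_k\to0$ uniformly over the random, $k$-dependent sets of boundary times --- which is exactly where the localization onto $\CK$ with $\CK\cap\CV_0=\varnothing$ and Corollary~\ref{cor:simple} are essential --- is the delicate point; the remaining passage to the limit is standard.
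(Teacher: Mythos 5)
Your proof is correct, and it follows the same overall strategy as the paper: compare $r_{n_k}$ on $\pa D_{n_k}\cap\CK$ with a continuous field that agrees with $r_0$ on $\pa D_0\cap\CK$, show the discrepancy tends to zero uniformly via compactness of $\CK$ and the convergence $r_n \Ra r_0$, and then pass to the limit in the Stieltjes integrals using the uniform convergence of $Z^{\CK}_{n_k}$ and $l^{\CK}_{n_k}$ together with the fact (Lemma~\ref{lemma:rep-l}) that $\ol l$ charges only $\{\ol Z\in\pa D_0\}$. The difference is the surrogate field and the bookkeeping: the paper uses $r_0\circ\zeta$, where $\zeta$ is the nearest-point projection onto $\pa D_0$ supplied by Lemma~\ref{lemma:aux}, and splits the error into three terms $I_1, I_2, I_3$ (the analogue of your $\Delta_k$ being the bound~\eqref{eq:sup-10}, $\sup_{x\in\CK\cap\pa D_n}\norm{r_n(x)-r_0(\zeta(x))}\to 0$, plus the observation $\norm{x-\zeta(x)}\le\eps_n\to 0$), whereas you use a Tietze extension $\tilde r_0$ of $r_0|_{\pa D_0\cap\CK}$ and a two-step reduction. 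Your route has the small advantage of not invoking Lemma~\ref{lemma:aux} at all in this lemma (no $C^2$ regularity of $\phi_0$ or uniqueness/continuity of the projection is needed, only continuity of $r_0$ off $\CV_0$ and Corollary~\ref{cor:simple}), at the cost of a nonconstructive extension and of having to justify the ``routine'' weak-convergence step $\int f\,\md l^{\CK}_{n_k}\to\int f\,\md\ol l$ for continuous $f$, which the paper outsources to its $I_2$/$I_3$ estimates and Lemma 6.2 of \cite{MyOwn8}; that step is indeed standard (piecewise-constant approximation of $f$ plus uniform convergence of the nondecreasing $l^{\CK}_{n_k}$). One cosmetic point: under condition (a) alone the set $\CV_{n_k}\cap\CK$ need not be empty, so the supremum defining $\Delta_k$ should be taken over $(\pa D_{n_k}\setminus\CV_{n_k})\cap\CK$, where $r_{n_k}$ is defined; this costs nothing since, as you note, $l^{\CK}_{n_k}$ only charges such points, and the same compactness argument applies (the paper's~\eqref{eq:sup-10} is written with the same slight imprecision). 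Your parenthetical about $[0,\ol\tau_{\CK}]$ versus $[0,T]$ likewise matches the paper's own level of precision.
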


Now, let us complete the proof of Theorem~\ref{thm:main}. Take a sequence $(m_k)_{k \ge 1}$. As in~\eqref{eq:conv}, there exists a subsequence $(n_k)_{k \ge 1}$ such that~\eqref{eq:conv} holds. Combining the statements of Lemmata~\ref{lemma:rep-W}, ~\ref{lemma:rep-V}, ~\ref{lemma:rep-l}, ~\ref{lemma:rep-L}, we get: for $t \le \ol{\tau}_{\CK}$, 
$$
\ol{Z}(t) = \ol{V}(t) + \ol{L}(t) = z_0 + \int_0^tg_0(\ol{Z}(s))\md s + \int_0^t\si_0(\ol{Z}(s))\md \ol{W}(s) + \int_0^tr_0(\ol{Z}(s))\md \ol{l}(s),
$$
where $\ol{W}$ behaves as a Brownian motion until $\ol{\tau}_{\CK}$, and the process $\ol{l}$ is continuous, nondecreasing, can increase only when $\ol{Z} \in \pa D$, and $\ol{l}(0) = 0$. Therefore, $\ol{Z}$ behaves as $Z_0$ until it exits $\Int\CK$. Apply Lemma~\ref{lemma:local} and finish the proof. 
%\begin{lemma}
%\label{lemma:n-k}
%There exists a subsequence $(n'_k)_{k \ge 1}$ of $(n_k)_{k \ge 1}$ such that 
%$$
%\left(V_{n'_k}, L_{n'_k}, W_{n'_k}\right)' 

\subsection{Proof of Lemma~\ref{lemma:tight-phi}} The sequence of the processes $(Z^{\CK}_n)_{n \ge 1}$ satisfy the following condition: for every $\de > 0$, 
$$
\lim\limits_{n \to \infty}\MP\left(\min\limits_{0 \le t \le T}\phi_0\left(Z^{\CK}_n(t)\right) \ge -\de\right) = 1.
$$
This is analogous to \cite[Lemma 4.2]{MyOwn8}, but here it is much easier to prove. 
Indeed, $Z^{\CK}_n(t) \in \ol{D}_n\cap\CK$. But we know that $D_n\Ra D_0$. From Lemma~\ref{lemma:equiv} (iii), we get:
$$
\varliminf\limits_{n \to \infty}\min\limits_{x \in \ol{D}_n\cap\CK}\phi_0(x) \ge 0.
$$
So there exists an $n_0(\de)$ such that for $n \ge n_0(\de)$ we have:
\begin{equation}
\label{eq:bound-for-phi}
\min\limits_{x \in \ol{D}_n\cap\CK}\phi_0(x) \ge -\de.
\end{equation}
Suppose that the following event happened:
\begin{equation}
\label{eq:event}
\left\{\oa\left(\phi_0\left(Z^{\CK}_n(\cdot)\right), [0, T], \eps\right) \ge 3\de\right\}.
\end{equation}
Then there exist $t_1, t_2 \in [0, T]$ such that $\phi_0\left(Z^{\CK}_n(t_1)\right) - \phi_0\left(Z^{\CK}_n(t_2)\right) \ge 3\de$ and $|t_1 - t_2| \le \eps$. Let 
$$
s_1 := t_1\wedge\tau_{\CK, n},\ \ s_2 := t_2\wedge\tau_{\CK, n}.
$$
Then $s_1, s_2 \in [0, \tau_{\CK, n}]$ and $|s_1 - s_2| \le \eps$. Also, 
$\phi_0(Z_n(s_1)) - \phi_0(Z_n(s_2)) \ge 3\de$. 
Now, $\phi_0(Z_n(s_2)) \ge -\de$ because of~\eqref{eq:bound-for-phi}. By continuity of 
$\phi_0(Z_n(\cdot))$, there exists $s_0$ between $s_1$ and $s_2$ such that 
$$
\phi_0(Z_n(s)) \ge \de\ \ \mbox{for}\ \ s\ \ \mbox{between}\ \ s_1, s_0,\ \ \mbox{and}\ \ \phi_0(Z_n(s_1)) - \phi_0(Z_n(s_0)) \ge \de.
$$
Certainly, $|s_0 - s_1| \le \eps$. But the function $\phi_0$ is $1$-Lipschitz, and so 
$$
\norm{Z_n(s_1) - Z_n(s_0)} \ge \phi_0(Z_n(s_1)) - \phi_0(Z_n(s_0)) \ge \de.
$$
For $s \in [0, \tau_{\CK, n}]$, we have: $Z_n(s) \in \CK$. Since $\phi_0(Z_n(s)) \ge \de$ for $s$ between $s_0$ and $s_1$, we have: $Z_n(s_1) - Z_n(s_0) = V_n(s_1) - V_n(s_0)$. Therefore,
\begin{equation}
\label{eq:diff-V}
\norm{V_n(s_1) - V_n(s_0)} = \norm{Z_n(s_1) - Z_n(s_0)} \ge \de.
\end{equation}
Taking $u_1 = s_1, u_2 = s_0$, we get from~\eqref{eq:diff-V} that the following event actually happened: 
\begin{equation}
\label{eq:contain}
\left\{\oa\left(\phi_0\left(Z^{\CK}_n(\cdot)\right), [0, T], \eps\right) \ge 3\de\right\} \subseteq A_n(\eps),
\end{equation}
where we define 
$$
A_n(\eps) := \left\{\exists\, u_1, u_2 \in [0, T]\mid |u_1 - u_2| \le \eps,\ \norm{V_n(u_1) - V_n(u_2)} \ge \de\right\}.
$$
Now, the sequence $(V_n(\cdot\wedge\tau_{\CK, n}))_{n \ge 1}$ is tight. Indeed, we can write 
$$
V_n\left(t\wedge\tau_{\CK, n}\right) = z_n + \int_0^tg_n(Z_n(s))1_{\{s \le \tau_{\CK, n}\}}\md s + \int_0^t\si_n(Z_n(s))1_{\{s \le \tau_{\CK, n}\}}\md W_n(s).
$$
Now, from Lemma~\ref{lemma:sup-g-sigma} below, there exists $n_1$ such that for $n \ge n_1$,
$$
\left|g_n(Z_n(s))\right| \le C_g,\ \ \left|\si_n(Z_n(s))\right| \le C_{\si},\ \ s \le \tau_{\CK, n}.
$$
Therefore, for all $s \in [0, T]$ and $n \ge n_1$, 
$$
\left|g_n(Z_n(s))1_{\{s \le \tau_{\CK, n}\}}\right| \le C_g,\ \ \left|\si_n(Z_n(s))1_{\{s \le \tau_{\CK, n}\}}\right| \le C_{\si}.
$$
By \cite[Lemma 7.4]{MyOwn6} (applied to the local martingale part) and the Arzela-Ascoli criterion (applied to the bounded variation part), the sequence $\left(V_n\left(\cdot\wedge\tau_{\CK, n}\right)\right)_{n \ge 1}$ is tight. Therefore, 
\begin{equation}
\label{eq:to-0}
\lim\limits_{\eps \to 0}\sup\limits_{n \ge 1}\MP\left(\exists\, u_1, u_2 \in [0, T] \mid 
|u_1 - u_2| \le \eps,\ \ \norm{V_n(u_1) - V_n(u_2)} \ge \de\right) = 0.
\end{equation}
Comparing~\eqref{eq:to-0} with~\eqref{eq:contain}, we get:
$$
\lim\limits_{\eps \to 0}\sup\limits_{n \ge 1}\MP\left(\oa\left(\phi_0\left(Z^{\CK}_n(\cdot)\right), [0, T], \eps\right) \ge 3\de\right) = 0.
$$
Apply the Arzela-Ascoli criterion and complete the proof. 

\begin{lemma} There exists an $n_0$ and constants $C_g, C_{\si}, C_r$ such that for $n \ge n_0$, we have: 
\begin{equation}
\label{eq:sup-g-sigma}
\sup\limits_{x \in \CK\cap \ol{D}_n}\norm{g_n(x)} \le C_g, \ \ 
\sup\limits_{x \in \CK\cap \ol{D}_n}\norm{\si_n(x)} \le C_{\si},\ \ \sup\limits_{x \in \CK\cap\pa D_n}\norm{r_n(x)} \le C_r.
\end{equation}
\label{lemma:sup-g-sigma}
\end{lemma}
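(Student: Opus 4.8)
The plan is to establish all three estimates by one and the same argument: a proof by contradiction that combines the compactness of $\CK$ with the locally uniform convergence of the coefficient fields (Definition~\ref{defn:uniform-conv}(i)) and with Corollary~\ref{cor:simple}(i). Note first that if any one of the three suprema in~\eqref{eq:sup-g-sigma} fails to be eventually bounded, then the corresponding intersections $\CK\cap\ol{D}_n$ (resp. $\CK\cap(\pa D_n\setminus\CV_n)$, on which $r_n$ is defined) are nonempty along a subsequence, so there is nothing lost in assuming the relevant sets are nonempty.

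First I would treat $g_n$. Suppose no such $n_0$ and $C_g$ exist. Then for every $m \ge 1$ there are an index $n_m \ge m$ and a point $x_{n_m} \in \CK\cap\ol{D}_{n_m}$ with $\norm{g_{n_m}(x_{n_m})} > m$; passing to a subsequence we may take $(n_m)_{m\ge1}$ strictly increasing. Since $\CK$ is compact, after passing to a further subsequence (not relabeled) we have $x_{n_m} \to x_0$ for some $x_0 \in \CK$. Because $x_{n_m}\in\ol{D}_{n_m}$ and $D_n\Ra D_0$, Corollary~\ref{cor:simple}(i) gives $x_0\in\ol{D}_0$, i.e. $x_0$ lies in the domain of $g_0$. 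Then $g_n\Ra g_0$ together with Definition~\ref{defn:uniform-conv}(i) yields $g_{n_m}(x_{n_m}) \to g_0(x_0)\in\BR^d$, so the sequence $\norm{g_{n_m}(x_{n_m})}$ is bounded, contradicting $\norm{g_{n_m}(x_{n_m})} > m \to \infty$. The bound for $\si_n$ is identical once we recall, by Remark~\ref{rmk:sqrt}, that $A_n\Ra A_0$ implies $\si_n\Ra\si_0$; the same extraction then applies with $g$ replaced by $\si$.

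For $r_n$ the only differences are that $r_n$ is defined on $\pa D_n\setminus\CV_n$, so the last supremum is understood over $\CK\cap(\pa D_n\setminus\CV_n)$, and that the limit point must be placed in $\pa D_0\setminus\CV_0$, the domain of $r_0$. If the bound fails, we obtain $n_m\uparrow\infty$ and $x_{n_m}\in\CK\cap(\pa D_{n_m}\setminus\CV_{n_m})$ with $\norm{r_{n_m}(x_{n_m})} > m$, and, passing to a subsequence, $x_{n_m}\to x_0\in\CK$. By Corollary~\ref{cor:simple}(i), $x_0\in\pa D_0$; and since $\CK\subseteq\BR^d\setminus\CV_0$ we have $x_0\notin\CV_0$, hence $x_0\in\pa D_0\setminus\CV_0$. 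Then $r_n\Ra r_0$ and Definition~\ref{defn:uniform-conv}(i) give $r_{n_m}(x_{n_m})\to r_0(x_0)\in\BR^d$, again contradicting $\norm{r_{n_m}(x_{n_m})}\to\infty$.

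There is no serious obstacle here; the argument is a routine compactness-and-extraction. The only points requiring care are (i) invoking Corollary~\ref{cor:simple}(i) so that the limit point $x_0$ actually lies in the set on which $g_0$, $\si_0$, resp. $r_0$ is defined — for $r_0$ this additionally uses $\CK\cap\CV_0=\varnothing$ — and (ii) passing from $A_n\Ra A_0$ to $\si_n\Ra\si_0$ via Remark~\ref{rmk:sqrt}. (Local boundedness of $g_0,A_0,r_0$ is not even needed for this lemma, since $g_0(x_0),\si_0(x_0),r_0(x_0)$ are finite by definition; it is used elsewhere in the proof of Theorem~\ref{thm:main}.)
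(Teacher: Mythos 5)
Your proof is correct and follows essentially the same route as the paper: assume the bound fails, extract a convergent subsequence in the compact set $\CK$, note the limit point lies in $\ol{D}_0$ (resp.\ $\pa D_0\setminus\CV_0$), and derive a contradiction from $g_n\Ra g_0$, $\si_n\Ra\si_0$, $r_n\Ra r_0$ via Definition~\ref{defn:uniform-conv}(i). You merely spell out the details the paper leaves implicit (Corollary~\ref{cor:simple}(i) for the limit point, Remark~\ref{rmk:sqrt} for $\si_n$, and $\CK\cap\CV_0=\varnothing$ in the $r_n$ case).
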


\begin{proof} Let us prove this for $g_n$; the proofs for $\si_n$ and $r_n$ are similar. Assume the converse; then there exist $n_k \to \infty$ and $x_{n_k} \in \CK\cap\ol{D}_{n_k}$ such that $\norm{g_{n_k}(x_{n_k})} \to \infty$. But the set $\CK$ is compact, so there is a convergent subsequence $x_{n'_k} \to x_0 \in \CK\cap\ol{D}_0$.  
Therefore, $g_{n'_k}(x_{n'_k}) \to g_0(x_0)$. This contradiction completes the proof. 
\end{proof}

\subsection{Proof of Lemma~\ref{lemma:tight-V}} For all $s \ge 0$, $Z^{\CK}_n(s) \in \CK\cap\ol{D}_n$. We can conclude that the sequence 
$$
t \mapsto \int_0^{t\wedge\tau_{\CK, n}}g_n(Z^{\CK}_n(s))\md s
$$
is tight by Arzela-Ascoli criterion. Next, the sequence
$$
\ol{M}_n(t) := \int_0^{t\wedge\tau_{\CK, n}}\si_n(Z^{\CK}_n(s))\md W_n(s)
$$
is tight by \cite[Lemma 6.4]{MyOwn8}. Indeed, each $\ol{M}_n$ is a continuous local martingale with $\ol{M}_n(0) = 0$, and
$$
\langle \ol{M}_n\rangle_t = \int_0^{t\wedge\tau_{\CK, n}}\norm{\si_n(Z^{\CK}_n(s))}^2\md s.
$$
But $Z^{\CK}_n(s) \in D_n\cap\CK$ for all $s \in [0, T]$. Apply Lemma~\ref{lemma:sup-g-sigma} and complete the proof.

\subsection{Proof of Lemma~\ref{lemma:tight-l}} Let us state a technical lemma, which is proved in Appendix. 

\begin{lemma} For every compact subset $\CK \subseteq \BR^d\setminus\CV_0$, there exists a $\de_{\CK} \in (0, \dist(\CK, \CV_0)/2)$ such that:

\smallskip

(i) the signed distance function $\phi_0$ is $C^2$ on the set 
\begin{equation}
\label{eq:CK0}
\CK' := \{x \in \CK\mid |\phi_0(x)| \equiv \dist(x, \pa D_0) \le \de_{\CK}\};
\end{equation}

\smallskip

(ii) for every $x \in \CK'$, there exists a unique point $\zeta(x) \in \pa D_0\setminus\CV_0$  which is the closest to $x$ on $\pa D_0$: $\norm{x - \zeta(x)} = \dist(x, \pa D_0) = \dist(x, \pa D_0\setminus\CV_0)$, and this function $\zeta$ is continuous on $\CK'$. 
\label{lemma:aux}
\end{lemma}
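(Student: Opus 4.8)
\emph{Proof idea.} The plan is to reduce the statement to the classical regularity of the signed distance function in a tubular neighborhood of a $C^2$ hypersurface, after localizing away from the exceptional set $\CV_0$. First, since $\CV_0$ is closed and $\CK$ is compact with $\CK\cap\CV_0 = \vn$, the number $\rho := \dist(\CK,\CV_0)$ is strictly positive; set $\CK_1 := \{x\in\BR^d\mid\dist(x,\CK)\le\rho/2\}$, a compact set with $\dist(\CK_1,\CV_0)\ge\rho/2>0$. Then $\Sigma := \pa D_0\cap\CK_1$ is a \emph{compact} subset of $\pa D_0\setminus\CV_0$, which by hypothesis is an orientable two-sided $C^2$ hypersurface in $\BR^d$ (with $D_0$ on one side) carrying a $C^1$ inward unit normal field $\fn$.

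Next I would invoke the standard tubular-neighborhood theorem on the compact piece $\Sigma$. The normal map $\Phi(y,t) := y + t\,\fn(y)$ is $C^1$ on $(\pa D_0\setminus\CV_0)\times\BR$ and has invertible differential at each $(y,0)$; by the inverse function theorem together with the compactness of $\Sigma$, it restricts to a $C^1$-diffeomorphism from $W\times(-\eta,\eta)$ onto an open neighborhood $N\supseteq\Sigma$ in $\BR^d$, for some open $W\supseteq\Sigma$ in $\pa D_0\setminus\CV_0$ and some $\eta\in(0,\rho/2)$, with $N\supseteq\{x\mid\dist(x,\Sigma)<\eta\}$ after shrinking $\eta$. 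On $N$ the two components of $\Phi^{-1}$ are, respectively, the nearest-point projection $\zeta$ onto $\pa D_0\setminus\CV_0$ (hence single-valued and $C^1$ there) and the signed distance $\phi_0$; moreover $\phi_0\in C^2(N)$, through $\Sigma$ and not merely away from it, since $\Sigma$ lies in a $C^2$ hypersurface. This is classical (see, e.g., Lemma~14.16 in Gilbarg and Trudinger, or the work of Krantz and Parks on distance functions), and the possible non-closedness and unboundedness of $\pa D_0\setminus\CV_0$ are harmless, as everything takes place near the compact set $\Sigma$.

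Then I would take $\de_{\CK} := \tfrac14\min\{\eta,\rho\}$, which lies in $(0,\dist(\CK,\CV_0)/2)$, and set $\CK' := \{x\in\CK\mid|\phi_0(x)|\le\de_{\CK}\}$, a compact set since $\phi_0$ is $1$-Lipschitz. For $x\in\CK'$: as $\pa D_0$ is closed there is a nearest point $y\in\pa D_0$ with $\norm{x-y} = \dist(x,\pa D_0) = |\phi_0(x)|\le\de_{\CK} < \rho/2$; then $y\in\CK_1$, and since $\dist(\CK_1,\CV_0)\ge\rho/2 > \de_{\CK}$ we get $y\notin\CV_0$, i.e.\ $y\in\Sigma$. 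Hence $\dist(x,\Sigma) = \dist(x,\pa D_0) = |\phi_0(x)| < \eta$, so $x\in N$, and by the previous step $y$ is the unique nearest point of $x$ on $\pa D_0\setminus\CV_0$; since every nearest point on $\pa D_0$ lies in $\Sigma$ by the same argument, $y$ is also the unique nearest point on $\pa D_0$. Thus $\zeta(x) := y$ is well defined, $\dist(x,\pa D_0) = \dist(x,\pa D_0\setminus\CV_0)$ (consistent with Assumption~\ref{asmp:boundary}), $\zeta$ is $C^1$ on $N\supseteq\CK'$ hence continuous on $\CK'$ (alternatively, continuity follows from a short compactness-and-uniqueness argument), and $\phi_0\in C^2$ on a neighborhood of $\CK'$. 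This establishes (i) and (ii).

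The hard part is the bookkeeping rather than any single estimate: one must verify that for $x\in\CK'$ the attained nearest point on $\pa D_0$ cannot escape to the exceptional set $\CV_0$, so that the classical tubular-neighborhood and distance-regularity results — typically stated for the boundary of a bounded $C^2$ domain, or for a compact hypersurface — may legitimately be applied to the compact piece $\Sigma$ of the possibly unbounded, non-closed smooth part $\pa D_0\setminus\CV_0$. Compactness of $\Sigma$ is precisely what yields a uniform tube radius $\eta$; once this reduction is in place, uniqueness of the projection, the $C^2$-regularity of $\phi_0$ through $\Sigma$, and the continuity of $\zeta$ are all standard.
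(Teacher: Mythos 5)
Your argument is correct in substance, but it takes a genuinely different route from the paper. The paper's own proof is three lines: it outsources part (i) and the existence and uniqueness of the nearest point $\zeta(x)$ to Lemma 3.2 of its companion paper \cite{MyOwn8}, and only proves continuity of $\zeta$ — by exactly the compactness-and-uniqueness argument you mention parenthetically as an alternative (take $x_n \to x_0$ in $\CK'$, pass to a limit point $y_0$ of $\zeta(x_n)$, note $\norm{x_0 - y_0} = \dist(x_0,\pa D_0)$, and invoke uniqueness of the nearest point). You instead give a self-contained proof: the buffer set $\CK_1$ with $\dist(\CK_1,\CV_0)\ge \rho/2$, the compact piece $\Sigma = \pa D_0\cap\CK_1$ of the smooth boundary, a uniform tubular neighborhood of $\Sigma$ via the normal map and compactness, the observation that for $x\in\CK'$ every nearest point on $\pa D_0$ is forced into $\Sigma$ (so it cannot escape to $\CV_0$ or to distant boundary pieces, and since the smooth part is relatively open in $\pa D_0$, the foot-of-perpendicular condition applies), uniqueness from injectivity of the normal map, and regularity of $\phi_0$ and $\zeta$ from the classical distance-function results. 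What your approach buys is independence from \cite{MyOwn8} and an explicit treatment of the one non-classical point (localization away from $\CV_0$ and away from far boundary pieces), which the paper hides inside the citation; what the paper's approach buys is brevity. One caveat: your intermediate assertion that $\phi_0\in C^2(N)$ on the whole tube $N$ is slightly too strong, since $\phi_0$ is the signed distance to all of $\pa D_0$ and at points of $N$ far from $\CK$ it may be governed by boundary points outside $\Sigma$ (possibly near $\CV_0$), where no smoothness is available. This does not damage the proof: for points in a small neighborhood of $\CK'$ your own buffer argument (with $\de_{\CK}+2\eps<\rho/2$) shows all nearest points lie in $\Sigma$ within the tube radius, so there $\phi_0$ coincides with the tubular $t$-coordinate and the $C^2$ claim holds exactly where the lemma needs it; you should just state the identification on that neighborhood rather than on all of $N$.
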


Take a $C^{\infty}$ function $\psi : \BR \to \BR$ such that 
$$
\psi(x) := 
\begin{cases}
x,\ |x| \le \de_{\CK}/2;\\
0,\ |x| \ge \de_{\CK}.
\end{cases}
$$
Let us write an It\^o equation for the process $\psi(\phi_0(Z^{\CK}_n(\cdot)))$, or, equivalently, for $(\psi\circ\phi_0)(Z_n(t))$ for $t \le \tau_{\CK, n}$. We have: $\psi\circ\phi_0 \in C^2$ on $\CK$. Therefore, we can apply It\^o formula for the function $\psi\circ\phi_0$. We have: $\nabla(\psi\circ\phi_0)(x) = \psi'(\phi_0(x))\nabla\phi_0(x)$. 
Abusing the notation, we can write this even if $|\phi_0(x)| > \de_{\CK}$, where the function $\phi_0$ might not be $C^2$, since then $\psi'(\phi_0(x)) = 0$ and the left-hand side is also zero. In addition, a similar formula holds for second derivatives:
$$
\theta_{ij}(x) := \frac{\pa^2(\psi\circ\phi_0)(x)}{\pa x_i\pa x_j} = \psi''(\phi_0(x))\frac{\pa\phi_0}{\pa x_i}\frac{\pa\phi_0}{\pa x_j} + \psi'(\phi_0(x))\frac{\pa^2\phi_0}{\pa x_i\pa x_j}.
$$
By It\^o's formula, for $t \le \tau_{\CK, n}$, 
\begin{equation}
\label{eq:ito}
\md \psi(\phi_0(Z_n(t))) = \psi'(\phi_0(Z_n(t)))\nabla\phi_0(Z_n(t))\cdot \md Z_n(t) + 
\SL_{i=1}^d\SL_{j=1}^d\theta_{ij}(Z_n(t))\md\langle (Z_n)_i, (Z_n)_j\rangle_t.
\end{equation}
Now, from~\eqref{eq:Z=V+L} and the fact that $L_n$ has finite variation, we get: for $t \le \tau_{\CK, n}$, 
$$
\md\langle (Z_n)_i, (Z_n)_j\rangle_t = (\si_n\si^T_n)_{ij}(Z_n(t))\md t.
$$
From the properties of $\phi_0$ and $\psi$ it follows that the function $\psi'(\phi_0(x))\nabla\phi_0(x)$, as well as each $\theta_{ij}$ is bounded on $\CK$. Apply Lemma~\ref{lemma:sup-g-sigma} and note that $Z_n(t) \in \ol{D}_n\cap \CK$ for $t \le \tau_{\CK, n}$. By the Arzela-Ascoli criterion, the following sequence  is tight:
$$
t \mapsto \int_0^{t\wedge\tau_{\CK, n}}\SL_{i=1}^d\SL_{j=1}^d\theta_{ij}(Z_n(t))\md\langle (Z_n)_i, (Z_n)_j\rangle_t
$$
Take the first term in the right-hand side of~\eqref{eq:ito}
\begin{align*}
\psi'(\phi_0(Z_n(t)))\nabla&\phi_0(Z_n(t))\cdot \md Z_n(t) = \psi'(\phi_0(Z_n(t)))\nabla\phi_0(Z_n(t))\cdot g_n(Z_n(t))\md t \\ & + \psi'(\phi_0(Z_n(t)))\nabla\phi_0(Z_n(t))\cdot \si_n(Z_n(t))\md W_n(t) \\ &\ \ \ \ \ + \psi'(\phi_0(Z_n(t)))\nabla\phi_0(Z_n(t))\cdot r_n(Z_n(t))\md l_n(t).
\end{align*}
By Lemma~\ref{lemma:sup-g-sigma} and the Arzela-Ascoli criterion, the following sequence is tight:
$$
t \mapsto \int_0^{t\wedge\tau_{\CK, n}}\psi'(\phi_0(Z_n(s)))\nabla\phi_0(Z_n(s))\cdot g_n(Z_n(s))\md s
$$
Next, the following sequence of continuous local martingales
$$
M_n(t) := \int_0^{t\wedge\tau_{\CK, n}}\psi'(\phi_0(Z_n(s)))\nabla\phi_0(Z_n(s))\cdot \si_n(Z_n(s))\md W_n(s)
$$
is tight by Lemma 6.4 from \cite{MyOwn8}. Indeed, 
$$
\langle M_n\rangle_t = \int_0^{t\wedge\tau_{\CK, n}}\psi'^2(\phi_0(Z_n(s)))\norm{\nabla\phi_0(Z_n(s))\cdot \si_n(Z_n(s))}^2\md s,
$$
and the derivative of this function with respect to $t$ is uniformly bounded. (This follows from the fact that $\psi'$ is bounded on $\BR$, $\phi_0$ is bounded on $\CK$, and from Lemma~\ref{lemma:sup-g-sigma}. By Lemma 6.7 from the same article \cite{MyOwn8}, the sequence $\psi(\phi_0(Z_n^{\CK}(\cdot)))$ is itself tight. Therefore, the sequence 
$$
N_n(t) := \int_0^{t\wedge\tau_{\CK, n}}\psi'(\phi_0(Z_n(s)))\nabla\phi_0(Z_n(s))\cdot r_n(Z_n(s))\md l_n(s)
$$
is tight in $C([0, T], \BR^d)$. But the process $l_n$ can grow only when $Z_n \in \pa D_0$, that is, when $\phi_0(Z_n(s)) = 0$. For these $s$ we have: $\psi'(\phi_0(Z_n(s))) = 1$, because $\psi'(0) = 1$. Therefore, we can rewrite 
\begin{equation}
\label{eq:N-n}
N_n(t) := \int_0^{t\wedge\tau_{\CK, n}}\nabla\phi_0(Z_n(s))\cdot r_n(Z_n(s))\md l_n(s).
\end{equation}

\begin{lemma} There exists $n_0$ and $\eps_0 > 0$ such that for $n \ge n_0$, for $x \in \pa\ol{D}_n\cap\CK$, we have: $\nabla\phi_0(x)\cdot r_n(x) \ge \eps_0$. 
\label{lemma:eps-0}
\end{lemma}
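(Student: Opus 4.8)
The plan is a compactness/contradiction argument that combines $D_n \Ra D_0$, $r_n \Ra r_0$, and the fact that near $\pa D_0\setminus\CV_0$ the signed distance $\phi_0$ is smooth with $\nabla\phi_0 = \fn_0$. Throughout, I would let $x$ range over $(\pa D_n\setminus\CV_n)\cap\CK$, where $r_n$ is actually defined; this is the only case that matters for the use of the lemma in \eqref{eq:N-n}, since $l_n$ charges mass only at points of $\pa D_n$ visited by $Z_n$, and by the remark following Lemma~\ref{lemma:local} those points avoid $\CV_n$ once $n$ is large.

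First I would make a preliminary reduction ensuring that $\nabla\phi_0(x)$ is even well defined for the relevant $x$. Since $D_n\Ra D_0$, Lemma~\ref{lemma:equiv}(iii) gives $\phi_n\to\phi_0$ uniformly on $\CK$; as $\phi_n\equiv 0$ on $\pa D_n$ this forces $\sup_{x\in\pa D_n\cap\CK}|\phi_0(x)|\le\sup_{x\in\CK}|\phi_n(x)-\phi_0(x)|\to 0$. Hence there is $n_1$ such that $\pa D_n\cap\CK\subseteq\CK'$ for $n\ge n_1$, with $\CK'$ the set from \eqref{eq:CK0}; on $\CK'$ the function $\phi_0$ is $C^2$ by Lemma~\ref{lemma:aux}(i), so $\nabla\phi_0$ is continuous there.

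Now suppose the conclusion fails. Then there are $n_k\to\infty$ and points $x_{n_k}\in(\pa D_{n_k}\setminus\CV_{n_k})\cap\CK$ with $\nabla\phi_0(x_{n_k})\cdot r_{n_k}(x_{n_k})<1/k$. By compactness of $\CK$, pass to a subsequence with $x_{n_k}\to x_0\in\CK$. Corollary~\ref{cor:simple}(i) applied to $x_{n_k}\in\pa D_{n_k}$ gives $x_0\in\pa D_0$, and since $\CK$ is compact and disjoint from $\CV_0$ we get $x_0\in\pa D_0\setminus\CV_0$. For $k$ large $x_{n_k}\in\CK'$ by the preliminary reduction and $x_0\in\CK'$, so by continuity of $\nabla\phi_0$ on $\CK'$ together with the standard fact that $\nabla\phi_0=\fn_0$ on $\pa D_0\setminus\CV_0$ we have $\nabla\phi_0(x_{n_k})\to\fn_0(x_0)$. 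Meanwhile $r_n\Ra r_0$, applied via Definition~\ref{defn:uniform-conv}(i) to $x_{n_k}\in\pa D_{n_k}\setminus\CV_{n_k}$ converging to $x_0\in\pa D_0\setminus\CV_0$, gives $r_{n_k}(x_{n_k})\to r_0(x_0)$. Hence $\nabla\phi_0(x_{n_k})\cdot r_{n_k}(x_{n_k})\to\fn_0(x_0)\cdot r_0(x_0)=1$ by the normalization $r_0\cdot\fn_0\equiv 1$ of Assumption~2, contradicting $\nabla\phi_0(x_{n_k})\cdot r_{n_k}(x_{n_k})<1/k\to 0$. So the lemma holds with, say, $\eps_0=1/2$ and a suitable $n_0$.

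I do not expect a genuine analytic obstacle here; the only delicate points are bookkeeping ones: evaluating $\nabla\phi_0$ only where it is smooth (handled by the preliminary reduction together with Lemma~\ref{lemma:aux}) and evaluating $r_n$ only off $\CV_n$ (legitimate for the intended use, as explained above). If one insisted on the statement literally over all of $\pa D_n\cap\CK$, one would additionally invoke hypothesis (b) of Theorem~\ref{thm:main}, under which $\CK\cap\CV_n=\varnothing$ for $n$ large, so that $(\pa D_n\setminus\CV_n)\cap\CK=\pa D_n\cap\CK$.
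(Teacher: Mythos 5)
Your proposal is correct and follows essentially the same route as the paper: the same compactness/contradiction argument, using $\pa D_n\cap\CK\subseteq\CK'$ for large $n$, continuity of $\nabla\phi_0$ on $\CK'$, the convergence $r_n\Ra r_0$, and the positivity (normalization) of $\fn_0\cdot r_0$ at the limit point. Your explicit handling of where $r_n$ is defined (off $\CV_n$) is a minor bookkeeping refinement of what the paper leaves implicit.
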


\begin{proof} Assume the converse. Then there exist a subsequence $(n_k)_{k \ge 1}$ and a corresponding sequence of points $x_{n_k} \in \pa\ol{D}_{n_k}\cap \CK$ such that 
$$
\nabla\phi_0\left(x_{n_k}\right)\cdot r_{n_k}\left(x_{n_k}\right) \le \frac1k.
$$
Since $\CK$ is compact, there exists a subsequence $(n'_k)_{k \ge 1}$ such that $x_{n'_k} \to x_0$. Then $x_0 \in \CK\cap\pa D_0$. For all $k \ge k_0$, $x_{n'_k} \in \CK'$ (and $x_0 \in \CK'$). But $\nabla\phi_0$ is continuous on $\CK'$. Therefore, 
$\nabla\phi_0\left(x_{n_k}\right) \to \nabla\phi_0(x_0)$. 
Also, since $r_n \Ra r_0$, we have: $r_{n_k}\bigl(x_{n_k}\bigr) \to r_0(x_0)$. 
Therefore, passing to the limit, we have: $\nabla\phi_0(x_0)\cdot r_0(x_0) \le 0$. 
But $\nabla\phi_0(x_0)$ has the same direction as the inward unit normal vector $\fn(x_0)$ to $\pa D_0$, and by the properties of the reflection field $r_0$ we have:
$\fn(x_0)\cdot r_0(x_0) > 0$. This contradiction completes the proof. 
\end{proof}

In view of Lemma~\ref{lemma:eps-0}, we can rewrite~\eqref{eq:N-n} as
$$
l_n\left(t\wedge\tau_{\CK, n}\right) := \int_0^{t\wedge\tau_{\CK, n}}\left[\nabla\phi_0(Z_n(s))\cdot r_n(Z_n(s))\right]^{-1}\md N_n(t). 
$$
But $(N_n)_{n \ge 1}$ is tight, and by Lemma~\ref{lemma:eps-0} we have:
$$
\left[\nabla\phi_0(Z_n(s))\cdot r_n(Z_n(s))\right]^{-1} \le \eps_0^{-1}.
$$
Therefore, $l_n\left(\cdot\wedge\tau_{\CK, n}\right)$ is tight. The proof is complete.  

\subsection{Proof of Lemma~\ref{lemma:tight-L}} Note that the process $l_n$ can grow only when $Z_n \in \pa D_n$. By Lemma~\ref{lemma:sup-g-sigma}, for $n \ge n_0$, 
$$
\sup\limits_{0 \le s \le t\wedge\tau_{\CK, n}}\norm{r_n(Z_n(s))} \le C_r.
$$
Therefore, the sequence $(L_n)_{n \ge 1}$ is also tight. 

\subsection{Proof of Lemma~\ref{lemma:rep-V}} Without loss of generality, assume $n_k = k$ for convenience of notation. We have: 
$Z_{k}^{\CK} \to \ol{Z}$ uniformly on $[0, T]$, and 
$$
Z_{k}^{\CK}(s) \in \ol{D}_{k}\cap\CK,\ \ \mbox{and}\ \ \ol{Z}(s) \in \ol{D}_0\cap\CK\ \ \mbox{for}\ \ s \in [0, T].
$$
Recall the definition of locally uniform convergence of functions defined on different subsets of $\BR^d$. Since $g_n \Ra g_0$, $\si_n \Ra \si_0$ by Remark~\ref{rmk:sqrt}, and 
$$
Z^{\CK}_{k}(s) \to  \ol{Z}(s) \ \ \mbox{uniformly on}\ \ [0, T],
$$
by Lemma~\ref{lemma:equiv} (v) we have:
\begin{equation}
\label{eq:conv-chain}
g_{k}(Z^{\CK}_{k}(s)) \to g_0(\ol{Z}(s)),\ \ \si_{k}(Z^{\CK}_{k}(s)) \to \si_0(\ol{Z}(s)).
\end{equation}
From Lemma~\ref{lemma:conv-of-stochastic-integrals}, we have:
\begin{equation}
\label{eq:conv-of-si}
\int_0^{t\wedge\tau_{\CK, k}}\si_k(Z^{\CK}_k(s))\md W_k(s) = 
\int_0^t\si_k(Z^{\CK}_k(s))\md W_k^{\CK} \to 
\int_0^t\si_0(\ol{Z}(s))\md\ol{W}(s),
\end{equation}
where the convergence is understood in probability. Therefore, there exists a subsequence $(k_m)_{m \ge 1}$ such that 
\begin{equation}
\label{eq:conv-of-si-a.s.}
\int_0^{t\wedge\tau_{\CK, k_m}}\si_{k_m}(Z^{\CK}_{k_m}(s))\md W_{k_m}(s)  \to \int_0^t\si_0(\ol{Z}(s))\md\ol{W}(s)\ \ \mbox{a.s. uniformly on}\ \ [0, T].
\end{equation}

\begin{lemma} Uniformly on $[0, \ol{\tau}_{\CK}]$, we have: 
$$
\int_0^{t\wedge\tau_{\CK, n}}g_k(Z^{\CK}_k(s))\md s \to \int_0^{t\wedge\ol{\tau}_{\CK}}g_0(\ol{Z}(s))\md s.
$$
\label{lemma:conv-of-g}
\end{lemma}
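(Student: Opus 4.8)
The plan is to estimate the difference of the two integrals by splitting it, for $t \in [0,\ol{\tau}_{\CK}]$, into an ``integrand'' term $\int_0^{t\wedge\tau_{\CK,k}}\bigl(g_k(Z^{\CK}_k(s)) - g_0(\ol{Z}(s))\bigr)\,\md s$ and a ``time-mismatch'' term $\int_{t\wedge\tau_{\CK,k}}^{\,t} g_0(\ol{Z}(s))\,\md s$ (here $t\wedge\ol{\tau}_{\CK} = t$, since $t\le\ol{\tau}_{\CK}$), and then to show that each of these tends to $0$ a.s., uniformly in $t$. As in the proof of Lemma~\ref{lemma:rep-V}, I relabel the subsequence so that $n_k = k$ and work on the common probability space furnished by the Skorohod representation, on which $Z^{\CK}_k \to \ol{Z}$ uniformly on $[0,T]$ a.s.; I shall also use that $\ol{\tau}_{\CK} \le \varliminf_{k}\tau_{\CK,k}$ a.s., by Lemma~\ref{lemma:rep-W}.

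For both terms I first fix a constant $C_g$ dominating the integrands: by Lemma~\ref{lemma:sup-g-sigma} there is $n_0$ with $\norm{g_k(Z^{\CK}_k(s))} \le C_g$ for all $s\in[0,T]$ and $k\ge n_0$ (since $Z^{\CK}_k(s)\in\CK\cap\ol{D}_k$), and since $Z^{\CK}_k(s)\to\ol{Z}(s)$ with $\CK$ closed and $\ol{D}_0$ capturing limits by Corollary~\ref{cor:simple}(i), one has $\ol{Z}(s)\in\CK\cap\ol{D}_0$ for every $s$, so by local boundedness of $g_0$ we may also arrange $\norm{g_0(\ol{Z}(s))}\le C_g$. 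For the integrand term: for each fixed $s$ the points $Z^{\CK}_k(s)\in\ol{D}_k$ converge to $\ol{Z}(s)\in\ol{D}_0$, so $g_n\Ra g_0$ in the sense of Definition~\ref{defn:uniform-conv}(i) yields $g_k(Z^{\CK}_k(s))\to g_0(\ol{Z}(s))$ pointwise in $s$; combined with the domination by $2C_g$, dominated convergence gives
$$
\int_0^T\norm{g_k(Z^{\CK}_k(s)) - g_0(\ol{Z}(s))}\,\md s \longrightarrow 0,
$$
which bounds the integrand term uniformly in $t$. For the time-mismatch term,
$$
\norm{\int_{t\wedge\tau_{\CK,k}}^{\,t} g_0(\ol{Z}(s))\,\md s} \le C_g\bigl(t - t\wedge\tau_{\CK,k}\bigr) \le C_g(\ol{\tau}_{\CK} - \tau_{\CK,k})^+,
$$
and the right-hand side tends to $0$ a.s.\ (hence uniformly in $t\in[0,\ol{\tau}_{\CK}]$), precisely because $\ol{\tau}_{\CK}\le\varliminf_{k}\tau_{\CK,k}$. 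A triangle inequality combining the two estimates then gives the claim.

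The argument presents no deep difficulty; the one step requiring genuine care is the time-mismatch term, i.e.\ reconciling the two distinct random upper limits of integration, for which the inequality $\ol{\tau}_{\CK}\le\varliminf_{k}\tau_{\CK,k}$ from Lemma~\ref{lemma:rep-W} is exactly the right tool. A minor secondary subtlety is that $g_0$ is assumed only locally bounded, not continuous, so one cannot invoke uniform convergence of $g_k(Z^{\CK}_k(\cdot))$ to $g_0(\ol{Z}(\cdot))$ on $[0,T]$; instead one argues via pointwise convergence (Definition~\ref{defn:uniform-conv}(i)) together with dominated convergence, as above.
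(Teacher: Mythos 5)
Your proof is correct and uses the same decomposition as the paper: the difference of the two integrals is split into an integrand-difference term over $[0,t\wedge\tau_{\CK,k}]$ and a time-mismatch term controlled by $(\ol{\tau}_{\CK}-\tau_{\CK,k})^{+}$, with the latter handled, as in the paper, through Lemma~\ref{lemma:rep-W} and the uniform bounds of Lemma~\ref{lemma:sup-g-sigma} plus local boundedness of $g_0$ on the compact $\CK\cap\ol{D}_0$. (In fact your statement $\tau_{\CK,k}\ge\ol{\tau}_{\CK}-\eps$ is the correctly oriented consequence of $\ol{\tau}_{\CK}\le\varliminf_k\tau_{\CK,k}$; the paper's displayed inequality $\tau_{\CK,k}\le\ol{\tau}_{\CK}+\eps$ has the direction reversed, though the estimate it then derives is the intended one.) The only genuine divergence is the integrand term: the paper invokes the uniform-on-paths form of $g_n\Ra g_0$ (Definition~\ref{defn:uniform-conv}(ii)) to get $\max_{t\in[0,T]}\norm{g_k(Z^{\CK}_k(t))-g_0(\ol{Z}(t))}\le\eps$ and hence a bound $T\eps$, whereas you use the pointwise form (Definition~\ref{defn:uniform-conv}(i)) together with dominated convergence. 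Both are valid; your route has the modest advantage of not leaning on the equivalence of (i) and (ii), which Lemma~\ref{lemma:loc-unif-conv} guarantees only when the limit function is continuous, while Theorem~\ref{thm:main} assumes of $g_0$ only local boundedness — at the small cost of the extra domination and pointwise-limit checks, which you supply (in particular $\ol{Z}(s)\in\CK\cap\ol{D}_0$ via Corollary~\ref{cor:simple}(i)).
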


\begin{proof} For every $\eps > 0$ there exists $k_1(\eps)$ such that for $k \ge k_1(\eps)$ we have: $\tau_{\CK, k} \le \ol{\tau}_{\CK} + \eps$, and so for $t \le \ol{\tau}_{\CK}$ we have: $\left|t\wedge\ol{\tau}_{\CK} - t\wedge\tau_{\CK, k}\right| \le \eps$. Therefore, 
$$
\left|\int_0^{t\wedge\ol{\tau}_{\CK}}g_0(\ol{Z}(s))\md s - \int_0^{t\wedge\tau_{\CK, k}}g_0(\ol{Z}(s))\md s\right| \le \eps\cdot\max\limits_{[0, T]}\left|g_0(\ol{Z}(s))\right|. 
$$
%Now, $Z^{\CK}_n \to \ol{Z}$ uniformly on $[0, T]$, and for all $n \ge 1$ and $t \in [0, T]$, we have: $Z^{\CK}_n(t) \in \CK$. From $g_n \Ra g_0$,
From~\eqref{eq:conv-chain}, we have: $g_k(Z^{\CK}_k(t)) \to g_0(\ol{Z}(t))$  uniformly on $[0, T]$. Therefore, there exists $n_{\eps}$ such that for $n \ge n_{\eps}$ we have: 
\begin{equation}
\label{eq:430}
\max\limits_{t \in [0, T]}\norm{g_n(Z^{\CK}_n(t)) - g_0(\ol{Z}(t))} \le \eps.
\end{equation}
We have: for $n \ge n_{\eps}$, 
\begin{equation}
\label{eq:431}
\left|\int_0^{t\wedge\tau_{\CK, n}}g_n(Z^{\CK}_n(s))\md s - \int_0^{t\wedge\tau_{\CK, n}}g_0(\ol{Z}(s))\md s\right| \le T\cdot\max\limits_{t \in [0, T]}\norm{g_n(Z^{\CK}(t)) - g_0(\ol{Z}(t))} \le T\eps.
\end{equation}
Combining~\eqref{eq:430} and~\eqref{eq:431}, we have: for $n \ge n_{\eps}$, 
$$
\left|\int_0^{t\wedge\ol{\tau}_{\CK}}g_0(\ol{Z}(s))\md s - \int_0^{t\wedge\tau_{\CK, n}}g_n(Z^{\CK}_n(s))\md s\right| \le \eps\cdot\left(T + \max\limits_{[0, T]}\left|g_0(\ol{Z}(s))\right|\right). 
$$
Since $\eps > 0$ is arbitrary, the proof is complete. 
\end{proof}

Combining Lemma~\ref{lemma:conv-of-g} with~\eqref{eq:conv-of-si-a.s.} and $z_n \to z_0$, we get: uniformly on $[0, \ol{\tau}_{\CK}]$, 
\begin{align*}
V_{k_m}(t) &= z_{k_m} + \int_0^{t\wedge\tau_{\CK, k_m}}g_{k_m}(Z^{\CK}_{k_m}(s))\md s + \int_0^{t\wedge\tau_{\CK, k_m}}\si_{k_m}(Z^{\CK}_{k_m}(s))\md W_{k_m}(s) \\ & \ \ \ \to \ \ z_0 + \int_0^{t\wedge\ol{\tau}_{\CK}}g_0(\ol{Z}(s))\md s + \int_0^{t\wedge\ol{\tau}_{\CK}}\si_0(\ol{Z}(s))\md W_0(s).
\end{align*}
But $V_n \to V_0$ a.s. uniformly on $[0, T]$. This completes the proof of Lemma~\ref{lemma:rep-V}.

\begin{lemma}
\label{lemma:conv-of-stochastic-integrals}
For $m \ge 1$, let $Y_k = (Y_k(t), 0 \le t \le T)$ be an $\BR^d$-valued continuous adapted process, and let $U_k = (U_k(t), 0 \le t \le T)$ be an $\BR^d$-valued continuous local martingale. If in $C([0, T], \BR^d\times\BR^d)$ we have:
$(Y_k, U_k) \Ra (Y, U)$, $k \to \infty$, then $U$ is a semimartingale, and we have the following convergence in probability:
$$
\int_0^{t}Y_k\md U_k \to \int_0^{t}Y\md U\ \ \mbox{uniformly on}\ \ t \in [0, T].
$$
\end{lemma}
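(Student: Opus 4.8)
\emph{Proof proposal.} The plan is to reduce to almost-sure convergence by Skorokhod's representation theorem, to identify the weak limit $U$ as a continuous local martingale whose bracket is the limit of $\langle U_k\rangle$, and then to approximate $\int_0^t Y_k\,\md U_k$ by Riemann--Stieltjes sums, the error being controlled uniformly in $k$ by a truncation/localization argument together with the Burkholder--Davis--Gundy inequality.

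First I would check tightness of the quadratic variations. Since $(Y_k,U_k)$ converges weakly in $C([0,T],\BR^d\times\BR^d)$, the family $(U_k)_{k\ge1}$ is tight in $C([0,T],\BR^d)$. Using the Dubins--Schwarz time change $U_k^j(t)=B_k^j(\langle U_k^j\rangle_t)$ componentwise, a continuous local martingale can stay bounded on $[0,T]$ only if its total bracket is bounded: splitting on $\{\sup_{[0,T]}\|U_k\|\le M\}$ and using that $\MP(\sup_{s\le C}\|B(s)\|\le M)\to0$ as $C\to\infty$ for fixed $M$, one gets $\lim_{C\to\infty}\sup_k\MP(\tr\langle U_k\rangle_T>C)=0$. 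Hence $(Y_k,U_k,\langle U_k\rangle)$ is tight in $C([0,T],\BR^d\times\BR^d\times\BR^{d\times d})$; passing to a subsequence and applying Skorokhod representation, I may assume $Y_k\to Y$, $U_k\to U$, $\langle U_k\rangle\to A$ a.s.\ uniformly on $[0,T]$ on one probability space, with each $U_k$ still a continuous local martingale in its own augmented filtration.

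The next step identifies the limit. Localizing by $\rho_k^N:=\inf\{t\ge0\mid \|U_k(t)\|\vee\tr\langle U_k\rangle_t\ge N\}$ makes $U_k^{\rho_k^N}$ and $(U_k^{\rho_k^N})^{\otimes2}-\langle U_k\rangle^{\rho_k^N}$ uniformly integrable; testing the martingale identity against bounded continuous functionals of the path on $[0,s]$ (which converge by a.s.\ uniform convergence, and UI legitimizes passing to the limit), I obtain that $U$ is a continuous local martingale and $U^{\otimes2}-A$ is a local martingale, so $A=\langle U\rangle$ by uniqueness of the Doob--Meyer decomposition; in particular $U$ is a semimartingale. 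Since this identification is subsequence-independent, $\langle U_k\rangle\to\langle U\rangle$ in probability uniformly on $[0,T]$ along the full sequence, and I take the a.s.\ coupling above along the whole sequence. Finally, for a partition $\pi:0=t_0<\cdots<t_p=T$ write
$$
\int_0^t Y_k\,\md U_k=\SL_i Y_k(t_i)\bigl(U_k(t_{i+1}\wedge t)-U_k(t_i\wedge t)\bigr)+R_k^{\pi}(t),
$$
where $R_k^{\pi}$ is the stochastic integral of the ``frozen'' increment $Y_k(s)-Y_k(t_i)$, $s\in[t_i,t_{i+1})$, against $\md U_k$. After the obvious stopping of $Y_k$ at level $M$ and of $U_k$ when $\tr\langle U_k\rangle$ exceeds $C$ — which changes nothing on the event $\{\sup_{[0,T]}\|Y_k\|\le M\}\cap\{\tr\langle U_k\rangle_T\le C\}$, of probability $\ge 1-\eta$ uniformly in $k$ by the tightness above — the BDG inequality bounds $\ME[1_{\{\cdots\}}\sup_{t\le T}\|R_k^{\pi}(t)\|^2]$ by $c\,C\,\ME[\oa(Y_k,[0,T],|\pi|)^2\wedge(2M)^2]$, and tightness of $(Y_k)$ in $C([0,T])$ makes $\oa(Y_k,[0,T],|\pi|)$ uniformly small in probability as $|\pi|\to0$. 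The Riemann sum, a finite linear combination, converges a.s.\ uniformly in $t$ to $\SL_i Y(t_i)(U(t_{i+1}\wedge t)-U(t_i\wedge t))$, which is within $\eps$ of $\int_0^t Y\,\md U$ uniformly in $t$ once $|\pi|$ is small (the same BDG/oscillation estimate applied to the limit pair). A three-$\eps$ argument — fix $\pi$ fine, then let $k\to\infty$ — then gives $\int_0^t Y_k\,\md U_k\to\int_0^t Y\,\md U$ in probability, uniformly on $[0,T]$.

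The main obstacle is the middle step: a weak limit of local martingales need not be a local martingale, and $\langle U_k\rangle$ need not converge to $\langle U\rangle$, without a uniform control on the brackets — this is precisely the ``uniform tightness'' condition in the Kurtz--Protter theory, and here it is supplied for free by the Dubins--Schwarz estimate on $\tr\langle U_k\rangle_T$. Once that control and the identification $A=\langle U\rangle$ are in hand, the rest is routine; alternatively, one may simply cite the general weak-convergence theorem for stochastic integrals under condition (UT) and only verify that a tight sequence of continuous local martingales satisfies (UT).
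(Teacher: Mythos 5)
Your proposal takes a genuinely different route from the paper: the paper does not prove this lemma at all, but simply quotes it as a special case of \cite[Theorem 5.10]{KurtzProtter1991} (see also \cite[Lemma 3.6]{Thesis}), whereas you rebuild the result from scratch. Your architecture is the standard proof that a tight (in $C$) sequence of continuous local martingales is a ``good''/(UT) sequence of integrators: the Dubins--Schwarz bound giving $\lim_{C\to\infty}\sup_k\MP\left(\tr\langle U_k\rangle_T>C\right)=0$ is correct and is precisely the verification of (UT), and the discretization-plus-BDG three-$\eps$ argument is the standard way to finish once the limit $U$ is known to be a continuous local martingale. Your closing remark --- verify (UT) and cite the general theorem --- is in substance exactly what the paper does; what your self-contained version buys is independence from the reference, at the price of the technical steps you currently gloss over.

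Two of those steps need attention. First, ``hence $(Y_k,U_k,\langle U_k\rangle)$ is tight in $C$'' does not follow from stochastic boundedness of $\tr\langle U_k\rangle_T$ alone: tightness in $C$ of the brackets also requires an equicontinuity estimate, which needs its own argument (for instance, again via Dubins--Schwarz: an increment of $\langle U_k\rangle$ of size $\eta$ over a short time interval on which $U_k$ oscillates by at most $\eps$ forces the time-changed Brownian motion to oscillate by at most $\eps$ over an interval of length $\eta$ inside $[0,C]$, an event of uniformly small probability; this is the classical equivalence of tightness of continuous local martingales and of their brackets). Note, though, that your endgame never uses $A=\langle U\rangle$, only that $U$ is a continuous local martingale, so you could bypass the bracket tightness altogether. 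Second, in the identification step, passing the martingale property of the stopped processes to the limit needs $\rho^N_k\to\rho^N$, and hitting times are not continuous functionals of the path; the standard fix (restricting to $N$ outside a countable exceptional set, or smoothing the localization) should at least be mentioned. Finally, a framing point you share with the paper's statement: if $(Y_k,U_k)$ only converge weakly on possibly different spaces, ``convergence in probability'' of the integrals is not meaningful, and your Skorokhod step re-couples the sequence; what you actually obtain, and what transfers back (since for continuous integrands the stochastic integral is determined in law by the pair, being a limit in probability of Riemann sums), is joint weak convergence of $\left(Y_k,U_k,\int_0^{\cdot}Y_k\,\md U_k\right)$. In the paper's application the processes already converge a.s.\ on a common space, so there the Skorokhod step is unnecessary and your argument yields convergence in probability directly, which is the form actually used.
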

 
This lemma was proved in \cite[Theorem 5.10]{KurtzProtter1991}; see also \cite[Lemma 3.6]{Thesis}. Both of these statements are more general than Lemma~\ref{lemma:conv-of-stochastic-integrals}. For convenience, we state this result here in the form which is convenient for our use. 

\subsection{Proof of Lemma~\ref{lemma:rep-l}} As before, assume for simplicity that $n_k = k$. Fix $\eps > 0$ and let us prove these properties for $\ol{l}$ on $[0, \ol{\tau}_{\CK} - \eps]$. Note that there exists $n(\eps)$ such that for $k \ge n(\eps)$ we have: $\ol{\tau}_{\CK} \le \tau_{\CK, k} + \eps$. 
Now, $l_k^{\CK} \to \ol{l}(t)$ uniformly on $[0, T]$; but $l_k^{\CK}(t) \equiv l_k\left(t\wedge\tau_{\CK, k}\right) \equiv l_k(t)$ for $t \in [0, \tau_{\CK, k}] \subseteq [0, \ol{\tau}_{\CK} - \eps]$. Now, $l_k$ is nondecreasing and $l_k(0) = 0$; therefore, the same properties hold for $\ol{l}$ on $[0, \ol{\tau}_{\CK} - \eps]$. 

Fix $\de > 0$ and let us show that $\ol{l}$ does not increase on $[t_1, t_2] \subseteq [0, \ol{\tau}_{\CK} - \eps]$ if $\dist(\ol{Z}(t), \pa D_0) > \de$ for $t \in [t_1, t_2]$. Indeed, since $Z^{\CK}_k \to \ol{Z}$ uniformly on $[0, T]$, by Lemma~\ref{lemma:equiv} (v) we have:
$$
\phi_k(Z_k) \to \phi_0(\ol{Z})\ \ \mbox{uniformly on}\ \ [t_1, t_2].
$$
But $\dist(Z_k(t), \pa D_k) \equiv |\phi_k(Z_k(t))|$. Therefore, 
$$
\dist(Z_k(t), \pa D_k) \to \dist(\ol{Z}(t), \pa D_0)\ \ \mbox{uniformly on}\ \ [t_1, t_2].
$$
Therefore, for $k \ge m(\de)$, $t \in [t_1, t_2]$, we have: $\dist(Z_k(t), \pa D_k) \ge \de/2$. Meanwhile, $l_k$ does not grow on $[t_1, t_2]$: that is, $l_k(t_1) = l_k(t_2)$. Let $k \to \infty$ and conclude: $\ol{l}(t_1) = \ol{l}(t_2)$. Thus, $\ol{l}$ does not grow on $[t_1, t_2]$. 

Now, let us prove a more general statement: if $[t_1, t_2] \subseteq [0, \ol{\tau}_{\CK}]$ and $\dist(\ol{Z}(t), \pa D_0) > 0$ for $t \in [t_1, t_2]$, then $\ol{l}(t_1) = \ol{l}(t_2)$. Indeed, assume $\ol{l}(t_1) < \ol{l}(t_2)$. By continuity of $\ol{l}$, there exists $\eps > 0$ such that $\ol{l}(t_1) < \ol{l}(t_2-\eps)$. By continuity of $\ol{Z}$, there exists $\de > 0$ such that $\dist(\ol{Z}(t), \pa D_0) \ge \de$ for $t \in [t_1, t_2]$. Now, repeat the previous argument and conclude: $\ol{l}(t_1) = \ol{l}(t_2 - \eps)$. This contradiction completes the proof.

\subsection{Proof of Lemma~\ref{lemma:rep-L}} As before, we assume $n_k = k$ without loss of generality. There exists $n_0$ such that for $n \ge n_0$, we have: for $x \in \pa D_n\cap\CK$, $|\phi_0(x)| \le \de_{\CK}$. This follows from Lemma~\ref{lemma:equiv}. In other words, for $n \ge n_0$ we have: $\pa D_n\cap \CK \subseteq \CK'$, where $\CK'$ was defined in~\eqref{eq:CK0}. By Lemma~\ref{lemma:aux} (ii), the distance function $\zeta$ is continuous on $\CK'$. Note that 
$$
\eps_n := \max\limits_{x \in \CK\cap\pa D_n}|\phi_0(x)| \to 0.
$$
For $x \in \CK_0$, we have: $\norm{\zeta(x) - x} = \dist(x, \pa D_0) = |\phi_0(x)| \le \eps_n$. Therefore, by definition of locally uniform convergence $r_n \Ra r_0$, we have:
\begin{equation}
\label{eq:sup-10}
\sup\limits_{x \in \CK\cap\pa D_n}\norm{r_n(x) - r_0(\zeta(x))} \to 0.
\end{equation}
Therefore, we get:
$$
\int_0^tr_k(Z^{\CK}_k(s))\md l^{\CK}_k(s) - \int_0^tr_0(\ol{Z}(s))\md \ol{l}(s) = I_1(k) +I_2(k) + I_3(k),
$$
$$
I_1(k) := \int_0^t\left[r_k(Z^{\CK}_k(s)) - r_0(\zeta(Z^{\CK}_k(s)))\right]\md l^{\CK}_k(s),
$$
$$
I_2(k) := \int_0^t\left[r_0(\zeta(Z^{\CK}_k(s))) - r_0(\ol{Z}(s))\right]\md \ol{l}(s),
$$
$$
I_3(k) := \int_0^tr_0(\zeta(Z^{\CK}_k(s)))\md l^{\CK}_k(s) - \int_0^tr_0(\zeta(Z^{\CK}_k(s)))\md \ol{l}(s).
$$
By Lemma 6.2 from \cite{MyOwn8}, $\norm{I_1(k)} \to 0$ as $k \to \infty$. 
From the relation~\eqref{eq:sup-10}, the fact that each $l^{\CK}_k$ is nondecreasing, and the convergence $l^{\CK}_k(T) \to \ol{l}(T)$, we have: $\norm{I_3(k)} \to 0$. 
Finally, $\zeta(\ol{Z}(s)) = \ol{Z}(s)$ when $\ol{Z}(s) \in \pa D_0$. But the function $\ol{l}$ can grow only when $\ol{Z}(s) \in \pa D_0$: this follows from Lemma~\ref{lemma:rep-l}. Therefore,
$$
\int_0^tr_0(\ol{Z}(s))\md\ol{l}(s) = \int_0^tr_0(\zeta(\ol{Z}(s)))\md\ol{l}(s).
$$
The function $\zeta$ is continuous on $\CK_0$, and since $Z_k^{\CK} \to \ol{Z}$ uniformly on $[0, T]$, we have: $\zeta(Z_k^{\CK}) \to \zeta(\ol{Z})$ uniformly on $[0, T]$. Therefore, as $k \to \infty$, 
$$
\norm{I_2(k)} \le \max\limits_{0 \le s \le T}\norm{r_0(\ol{Z}(s)) - r_0(\zeta(Z^{\CK}_k(s)))}\cdot\ol{l}(T) \to 0.
$$

\section{Convergence to a Non-Reflected Diffusion} 

\subsection{Convergence of domains to the whole space} 

Let us modify the definition of weak convergence $D_n \Ra D_0$ for the case $D_0 = \BR^d$. The main question is how to define $\phi_0(x)$, the signed distance from $x \in \BR^d$ to the boundary $\pa D_0$, because the set $D_0 = \BR^d$ has no boundary: $\pa\BR^d = \varnothing$. Intuitively, we can approximate $\BR^d$ by a very large ball $U(0, r)$. Take a point $x \in \BR^d$. Since $r$ is large, $x \in U(0, r)$, and the distance from $x$ to $\pa U(0, r)$ is equal to $r - \norm{x}$, which is also large. Therefore, it makes sense to define $\phi_0(x) := \infty$ for all $x \in \BR^d$. 

\begin{defn}
We say that a sequence of domains $(D_n)_{n \ge 1}$ {\it converges weakly to $\BR^d$} and write $D_n \Ra \BR^d$, if $\phi_n(x) \to \infty$ for all $x \in \BR^d$. 
\end{defn}

The following is an equivalent characterization of this weak convergence. The proof is postponed until Appendix.

\begin{lemma} $D_n \Ra \BR^d$ if and only if for every compact set $\CK \subseteq \BR^d$ there exists an $n_0$ such that for $n > n_0$ we have: $\CK \subseteq D_n$.
\label{lemma:conv-to-Rd}
\end{lemma}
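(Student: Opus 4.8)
The plan is to establish both implications directly from two elementary facts about the signed distance function: $\phi_n$ is $1$-Lipschitz on $\BR^d$, and $\phi_n(x)>0$ holds precisely when $x\in D_n$ (while $\phi_n(x)=0$ on $\pa D_n$ and $\phi_n(x)<0$ off $\ol{D}_n$), with the convention $\phi_n\equiv+\infty$ in the degenerate case $\pa D_n=\varnothing$, i.e.\ $D_n=\BR^d$. For the ``if'' direction, fix $x\in\BR^d$ and an arbitrary $M>0$, and apply the hypothesis to the compact set $\CK=\ol{U(x,M)}$: there is $n_0$ with $\ol{U(x,M)}\subseteq D_n$ for all $n>n_0$. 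Since $D_n$ is open, $\pa D_n\cap D_n=\varnothing$, so $\pa D_n$ is disjoint from $\ol{U(x,M)}$; hence $\dist(x,\pa D_n)\ge M$, and since also $x\in D_n$ we get $\phi_n(x)=\dist(x,\pa D_n)\ge M$ for all $n>n_0$. As $M$ was arbitrary, $\phi_n(x)\to\infty$, which is exactly $D_n\Ra\BR^d$.

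For the ``only if'' direction, assume $\phi_n(x)\to\infty$ for every $x\in\BR^d$, and let $\CK\subseteq\BR^d$ be compact. I claim $\min_{x\in\CK}\phi_n(x)\to\infty$ (the minimum being attained because $\phi_n$ is continuous and $\CK$ compact). If not, there are $M>0$, a subsequence $(n_k)_{k\ge1}$, and points $x_{n_k}\in\CK$ with $\phi_{n_k}(x_{n_k})\le M$; passing to a further subsequence, compactness of $\CK$ gives $x_{n_k}\to x_*\in\CK$. The $1$-Lipschitz property then yields $\phi_{n_k}(x_*)\le\phi_{n_k}(x_{n_k})+\norm{x_*-x_{n_k}}\le M+\norm{x_*-x_{n_k}}$, so $\varlimsup_{k\to\infty}\phi_{n_k}(x_*)\le M$, contradicting $\phi_n(x_*)\to\infty$. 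Thus $\min_{x\in\CK}\phi_n(x)\to\infty$, so there is $n_0$ with $\min_{x\in\CK}\phi_n(x)>0$ for all $n>n_0$, i.e.\ $\phi_n(x)>0$ for every $x\in\CK$, which is equivalent to $\CK\subseteq D_n$.

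I do not anticipate a real obstacle here: both directions reduce to combining the $1$-Lipschitz bound with compactness (to upgrade pointwise behaviour of $\phi_n$ to uniform behaviour on compact sets), and the only point needing a word of care is the degenerate case $D_n=\BR^d$, where $\pa D_n=\varnothing$ and the convention $\phi_n\equiv+\infty$ makes every claim above trivially valid.
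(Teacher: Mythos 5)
Your proof is correct and follows essentially the same route as the paper: the ``if'' direction via closed balls $\ol{U(x,M)}$, and the ``only if'' direction via compactness, subsequence extraction, and a contradiction with $\phi_n(x_*)\to\infty$. The only cosmetic differences are that you invoke the $1$-Lipschitz property of $\phi_n$ where the paper argues the bound $\phi_{n'_k}(y)\le\norm{y-x_{n'_k}}$ directly by a two-case check, and that you prove the slightly stronger claim $\min_{x\in\CK}\phi_n(x)\to\infty$; both variants are fine.
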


Let us state an analogue of Theorem~\ref{thm:main} for the case when $D_n \Ra \BR^d$, In this case, reflected diffusions $Z_n$ converge weakly to a {\it non-reflected} diffusion $Z_0$ in $\BR^d$. Take a sequence $(D_n)_{n \ge 1}$ of domains in $\BR^d$. For each $n \ge 1$, consider a reflected diffusion $Z_n = (Z_n(t), t \ge 0)$ in $\ol{D}_n$ with drift vector $g_n(\cdot)$, covariance matrix $A_n(\cdot)$, and reflection field $r_n(\cdot)$, starting from $Z_n(0) = z_n$. We suppose that Assumptions 1, 2, 3 are satisfied. We do {\it not} impose a condition that $Z_n$ does not hit non-smooth parts 
$\CV_n$ of the boundary $\pa D_n$. Define a drift coefficient $g_0 : \BR^d \to \BR^d$ and a covariance matrix $A_0 : \BR^d \to \CP_d$. For each $x \in \BR^d$, let $\si_0(x) = A^{1/2}(x)$. Consider a {\it non-reflected} diffusion process
$$
\md Z_0(t) = g_0(Z_0(t))\md t + \si_0(Z_0(t))\md W(t),\ \ Z_0(0) = z_0.
$$
Assume it exists and is unique in the weak sense. 

\begin{thm}
\label{thm:Rd}
Assume $D_n \Ra \BR^d$ weakly, and $g_n \Ra g_0$, $A_n \Ra A_0$, $z_n \to z_0$. 
Then $Z_n \Ra Z_0$ in $C([0, T], \BR^d)$ for every $T > 0$. 
\end{thm}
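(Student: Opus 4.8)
The plan is to follow the same localization-plus-tightness scheme as in the proof of Theorem~\ref{thm:main}, but simplified by the absence of a reflection term in the limit. First I would localize: fix $T>0$, take a large compact set $\CK\subseteq\BR^d$ with $z_0\in\Int\CK$, and set $\tau_{\CK,n}:=\inf\{t\ge0\mid Z_n(t)\notin\Int\CK\}$, $Z_n^{\CK}(t):=Z_n(t\wedge\tau_{\CK,n})$. The analogue of Lemma~\ref{lemma:local} holds verbatim: it suffices to show that every weak limit point of $(Z_n^{\CK})_{n\ge1}$ behaves as $Z_0^{\CK}$ until it exits $\Int\CK$. The key new ingredient is that, by Lemma~\ref{lemma:conv-to-Rd}, $D_n\Ra\BR^d$ forces $\CK\subseteq D_n$ for all $n\ge n_{\CK}$, so for such $n$ the stopped process $Z_n^{\CK}$ never touches $\pa D_n$ and hence the reflection term vanishes: $l_n(t\wedge\tau_{\CK,n})\equiv0$ and $Z_n^{\CK}(t)=z_n+\int_0^{t\wedge\tau_{\CK,n}}g_n(Z_n(s))\,\md s+\int_0^{t\wedge\tau_{\CK,n}}\si_n(Z_n(s))\,\md W_n(s)$. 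Thus $Z_n^{\CK}=V_n$ for large $n$, and there is nothing to prove about tightness of $l_n$ or $L_n$.

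Next I would establish tightness. Since $g_0,A_0$ are locally bounded and $g_n\Ra g_0$, $A_n\Ra A_0$, the compactness argument of Lemma~\ref{lemma:sup-g-sigma} gives $n_1$ and constants $C_g,C_\si$ with $\sup_{x\in\CK}\norm{g_n(x)}\le C_g$ and $\sup_{x\in\CK}\norm{\si_n(x)}\le C_\si$ for $n\ge n_1$ (here we use $\ol D_n\cap\CK=\CK$ for large $n$, or just restrict attention to $\CK$). Then the bounded-variation part $\int_0^{t\wedge\tau_{\CK,n}}g_n(Z_n(s))\,\md s$ is tight by Arzel\`a--Ascoli and the local-martingale part $\int_0^{t\wedge\tau_{\CK,n}}\si_n(Z_n(s))\,\md W_n(s)$ is tight by the quadratic-variation criterion (\cite[Lemma 6.4]{MyOwn8}), exactly as in the proof of Lemma~\ref{lemma:tight-V}. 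Combined with the obvious tightness of $(W_n^{\CK})_{n\ge1}$, we may extract a subsequence along which $(Z_n^{\CK},W_n^{\CK})\Ra(\ol Z,\ol W)$, pass to a common probability space by Skorohod, and apply Lemma~\ref{lemma:rep-W} to see that $\ol W$ is a Brownian motion up to $\ol\tau_{\CK}:=\inf\{t\mid\ol Z(t)\notin\Int\CK\}$ with $\ol\tau_{\CK}\le\varliminf\tau_{\CK,n}$.

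Then I would identify the limit. Using $g_n\Ra g_0$, $\si_n\Ra\si_0$ (Remark~\ref{rmk:sqrt}), $Z_n^{\CK}\to\ol Z$ uniformly, and Lemma~\ref{lemma:loc-unif-conv}(ii), we get $g_n(Z_n^{\CK}(s))\to g_0(\ol Z(s))$ and $\si_n(Z_n^{\CK}(s))\to\si_0(\ol Z(s))$ uniformly on $[0,T]$; note that since $D_0=\BR^d$ these convergences require no boundary considerations, and the role previously played by Lemma~\ref{lemma:equiv}(v) is here trivial. Lemma~\ref{lemma:conv-of-g} (whose proof does not use reflection) handles the drift integral, and Lemma~\ref{lemma:conv-of-stochastic-integrals} (Kurtz--Protter) handles the stochastic integral, giving $\ol Z(t)=z_0+\int_0^{t\wedge\ol\tau_{\CK}}g_0(\ol Z(s))\,\md s+\int_0^{t\wedge\ol\tau_{\CK}}\si_0(\ol Z(s))\,\md\ol W(s)$ for $t\le\ol\tau_{\CK}$. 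Hence $\ol Z$ behaves as $Z_0$ until it exits $\Int\CK$, and Lemma~\ref{lemma:local} completes the proof.

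The main obstacle is essentially bookkeeping rather than substance: one must make sure the localization argument of Lemma~\ref{lemma:local} is legitimately applicable with $\CV_0=\vn$ (so that ``$Z_0$ avoids non-smooth parts of the boundary'' is automatic and $\CK$ ranges over \emph{all} compact sets, not just those disjoint from $\CV_0$), and one must confirm that the fact $\CK\subseteq D_n$ eventually — which kills the reflection term — is exactly what replaces conditions (a)/(b) of Theorem~\ref{thm:main}. Everything downstream is then a strict simplification of the already-proven lemmata, since there is no $\ol l$, no $\ol L$, and no $\phi_0$-tightness to worry about.
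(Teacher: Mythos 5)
Your proposal is correct and follows essentially the same route as the paper: localize to a compact $\CK$, invoke Lemma~\ref{lemma:conv-to-Rd} to get $\CK\subseteq D_n$ eventually so that the reflection term vanishes and $Z_n^{\CK}\equiv V_n$, then conclude via the tightness and limit-identification arguments of Lemmata~\ref{lemma:tight-V} and~\ref{lemma:rep-V} together with Lemma~\ref{lemma:local}. The extra bookkeeping you flag (applying the localization with $\CV_0=\vn$) is handled the same way in the paper, so no changes are needed.
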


\begin{proof} We modify the proof of Theorem~\ref{thm:main} a bit. First, fix a compact set $\CK \subseteq \BR^d$ such that $z_0 \in \Int\CK$. It suffices to show that $Z_n^{\CK} \Ra Z_0^{\CK}$, then apply Lemma~\ref{lemma:local}. (It is stated and proved for a non-reflected $Z_0$ in the same way as for the case of a reflected diffusion $Z_0$.) By Lemma~\ref{lemma:conv-to-Rd}, there exists $n_0$ such that $\CK \subseteq D_n$ for $n > n_0$. So $L_n(t) \equiv 0$, and $Z_n^{\CK} \equiv V_n$ (we use the notation from the proof of Theorem~\ref{thm:main}). The rest of the proof is reduced to Lemmata~\ref{lemma:tight-V} and~\ref{lemma:rep-V}. 
\end{proof}

\subsection{Convergence of domains to ``almost'' the whole space} Now, assume $D_n \Ra D_0 = \BR^d\setminus\CM$, where $\CM \subseteq \BR^d$ is a ``set of dimension'' less than or equal to $d-2$. Then the limiting diffusion $Z_0$ (under some conditions) does not hit $\CM$, so this is actually a {\it non-reflected diffusion}. We use the notation of the previous subsection. We again suppose that Assumptions 1, 2, 3 are satisfied, and we do {\it not} impose a condition that $Z_n$ does not hit non-smooth parts $\CV_n$ of the boundary $\pa D_n$.

\begin{thm}
\label{thm:Rd-almost} In the notation of the previous subsection, assume 
$$
D_n \Ra D_0 = \BR^d\setminus\CM,\ \ g_n \Ra g_0,\ \ A_n \Ra A_0,\ \ z_n \to z_0.
$$
Finally, assume that the diffusion $Z_0 = (Z_0(t), t \ge 0)$, defined by
$$
\md Z_0(t) = g_0(Z_0(t))\md t + \si_0(Z_0(t))\md W(t),\ \ Z_0(0) = z_0, 
$$
a.s. does not hit the set $\CM$:
$$
\MP\left(\exists t \ge 0:\ Z_0(t) \in \CM\right) = 0.
$$
Then $Z_n \Ra Z_0$ in $C([0, T], \BR^d)$. 
\end{thm}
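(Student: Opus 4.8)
The plan is to run the proof of Theorem~\ref{thm:Rd}, now letting the set $\CM$ — which, in order that $D_0 = \BR^d\setminus\CM$ be a domain, we take to be closed — play the role of the exceptional set $\CV_0$ of the limit domain. Then $\ol{D}_0 = \BR^d$, $\pa D_0 = \CM$, no reflection field is defined on $\CV_0 = \CM$, and the equation defining $Z_0$ is exactly~\eqref{eq:def-main} with empty reflection term and $\tau_{\CV} = \infty$ a.s., the last being the standing assumption that $Z_0$ avoids $\CM$. Just as in the proof of Theorem~\ref{thm:Rd} (where $\CV_0 = \varnothing$), Lemma~\ref{lemma:local} remains available for a non-reflected $Z_0$ and with $\CV_0 = \CM$: since $Z_0$ a.s.\ does not hit $\CM$, the compacts $\CK\subseteq\BR^d\setminus\CM$ with $z_0\in\Int\CK$ (note $z_0\notin\CM$) exhaust the a.s.\ trajectory of $Z_0$ on $[0,T]$. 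So it suffices to fix one such $\CK$ and show that every weak limit point of $(Z^{\CK}_n)_{n\ge1}$ in $C([0,T],\BR^d)$ behaves as $Z_0$ until it exits $\Int\CK$.

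Fix such a $\CK$. The point of excluding $\CM$ is that the reflection term then disappears for all large $n$: since $\CK$ is a compact subset of $D_0 = \BR^d\setminus\CM$, Corollary~\ref{cor:simple}(ii) gives $n_{\CK}$ with $\CK\subseteq D_n$ for $n\ge n_{\CK}$, so for $n\ge n_{\CK}$ and $t\le\tau_{\CK,n}$ the path $Z_n(t)$ lies in the open set $D_n$, the local time $l_n$ does not grow, $L_n\equiv0$, and $Z^{\CK}_n \equiv V_n$ on $[0,T]$ in the notation of~\eqref{eq:Z=V+L}. In particular no control on the reflection fields $r_n$ (which may be ill-behaved near $\CM$) and no hypothesis (a) or (b) on the $\CV_n$ are needed: Lemma~\ref{lemma:sup-g-sigma} bounds $g_n,\si_n$ on $\CK\cap\ol{D}_n$, Lemma~\ref{lemma:tight-V} gives tightness of $(Z^{\CK}_n)_{n\ge n_{\CK}}$, and $(W^{\CK}_n)_{n\ge1}$ is automatically tight. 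Passing to a weakly convergent subsequence of $(V_n,W^{\CK}_n)$, realized a.s.\ via the Skorohod representation theorem with limit $(\ol{Z},\ol{W})$, and setting $\ol{\tau}_{\CK} := \inf\{t:\ol{Z}(t)\notin\Int\CK\}$, Lemma~\ref{lemma:rep-W} makes $\ol{W}$ a standard Brownian motion until $\ol{\tau}_{\CK}$; since $A_n\Ra A_0$ is equivalent to $\si_n\Ra\si_0$ (Remark~\ref{rmk:sqrt}) and $Z^{\CK}_n\equiv V_n$, Lemma~\ref{lemma:rep-V} applies to give, for $t\le\ol{\tau}_{\CK}$,
$$
\ol{Z}(t) = z_0 + \int_0^t g_0(\ol{Z}(s))\,\md s + \int_0^t \si_0(\ol{Z}(s))\,\md\ol{W}(s).
$$
Thus $\ol{Z}$ solves the non-reflected SDE defining $Z_0$ up to the exit from $\Int\CK$, and weak uniqueness of $Z_0$ shows $\ol{Z}$ behaves as $Z_0$ until it exits $\Int\CK$. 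Lemma~\ref{lemma:local} then completes the proof.

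I expect the only real work to be confirming that Lemma~\ref{lemma:local} still applies — i.e.\ re-examining its proof (as in Theorem~\ref{thm:Rd} and Lemma~4.1 of \cite{MyOwn8}) to see that, with $\CV_0 = \CM$ and $Z_0$ non-reflected, tightness of $(Z_n)$ on $[0,T]$ still holds and that the weak convergence $Z^{\CK}_n\Ra Z^{\CK}_0$ over the compacts $\CK\subseteq\BR^d\setminus\CM$ with $z_0\in\Int\CK$ still forces $Z_n\Ra Z_0$. This is where the codimension hypothesis on $\CM$ enters — beyond merely guaranteeing that $D_0$ is a domain — through the assumption that $Z_0$ a.s.\ avoids $\CM$: exhausting $\BR^d\setminus\CM$ by such compacts $\CK_m$ yields $\tau_{\CK_m,0}>T$ eventually a.s., so the stopped limits recover $Z_0$ on $[0,T]$. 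Once $\CM$ is excluded, every analytic ingredient is already supplied by the proofs of Theorems~\ref{thm:main} and~\ref{thm:Rd}, so the matter is bookkeeping rather than a new estimate.
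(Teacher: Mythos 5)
Your proposal is correct and follows exactly the paper's route: fix a compact $\CK \subseteq D_0 = \BR^d\setminus\CM$, use Corollary~\ref{cor:simple} to get $\CK \subseteq D_n$ for large $n$ so the reflection term vanishes and $Z_n^{\CK} \equiv V_n$, then invoke Lemmata~\ref{lemma:tight-V} and~\ref{lemma:rep-V} together with the localization Lemma~\ref{lemma:local}, with the hypothesis that $Z_0$ avoids $\CM$ ensuring the compacts exhaust its trajectory. Your extra remarks on re-checking Lemma~\ref{lemma:local} for the non-reflected limit are exactly the bookkeeping the paper leaves implicit.
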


\begin{rmk} Sufficient conditions for $Z_0$ not hitting $\CM$, when $\CM$ is a submanifold in $\BR^d$ of dimension less than or equal to $d - 2$, can be found in 
\cite{Ramasubramanian1988, Ramasubramanian1983}. 
\end{rmk}

\begin{proof} As in the proof of Theorem~\ref{thm:Rd}, we follow the proof of Theorem~\ref{thm:main}. Fix any compact set $\CK \subseteq D_0$. It suffices to prove that $Z_n^{\CK} \Ra Z_0^{\CK}$. By Corollary~\ref{cor:simple}, there exists $n_0$ such that for $n > n_0$, we have: $\CK \subseteq D_n$. Now, we just need to repeat the rest of the proof of Theorem~\ref{thm:Rd}. 
\end{proof}

\section{Appendix}

\subsection{Proof of Lemma~\ref{lemma:Wijsman}} (i) Similar to the proof of Lemma~\ref{lemma:equiv} below. 

\medskip

(ii) Fix $\eps > 0$ and let us show that $\dist(x_0, E_0) < 2\eps$. There exists $n_1$ such that for $n \ge n_1$ we have: $\norm{x_n - x_0} < \eps$. The set $\CK := \{x_n\mid n \ge 1\}$ is compact; therefore, $\dist(x, E_n) \to \dist(x, E_0)$ uniformly on $\CK$, and there exists $n_2$ such that for $n \ge n_2$, we have: $\left|\dist(x, E_n) - \dist(x, E_0)\right| < \eps$ for $x \in \CK$. Take $n = n_1\vee n_2$. Then 
$$
\dist(x_0, E_0) \le \dist(x_0, E_n) + \norm{x_n - x_0} \le \dist(x_n, E_n) + \eps + \norm{x_n - x_0} \le 2\eps.
$$
Since $\eps > 0$ is arbitrary, $\dist(x_0, E_0) = 0$; therefore, $x_0 \in \ol{E}_0$. 

\subsection{Proof of Lemma~\ref{lemma:equiv}} {\bf (i) $\Ra$ (iii)} Assume the converse: there exists a compact subset $\CK \subseteq \BR^d$, a positive number $\eps > 0$ and a sequence $x_{n_k} \in \CK$ such that 
$$
\bigl|\phi_{n_k}\bigl(x_{n_k}\bigr)  - \phi_0\bigl(x_{n_k}\bigr)\bigr| \ge \eps.
$$
By compactness, there exists a limit point $x_0 := \lim x_{n'_k}$. There exists $k_0$ such that for $k \ge k_0$, we have: $\norm{x_{n'_k} - x_0} \le \eps/3$. But the signed distance functions $\phi_0$ and $\phi_{n'_k}$ are $1$-Lipschitz, see \cite{Delfour1994}. 
Therefore, for $k \ge k_0$ we get:
$$
\bigl|\phi_{n'_k}\bigl(x_{n'_k}\bigr) - \phi_{n'_k}(x_0)\bigr| \le \frac{\eps}3,
 \ \ \bigl|\phi_0\bigl(x_{n'_k}\bigr) - \phi_0(x_0)\bigr| \le \frac{\eps}3.
$$
Thus, for $k \ge k_0$ we have:
$$
\bigl|\phi_{n'_k}(x_0) - \phi_0(x_0)\bigr| \ge \eps - \frac{\eps}3 - \frac{\eps}3 = \frac{\eps}3.
$$
This contradicts the condition (i). 

\medskip

{\bf (i) $\Lra$ (ii)} Note that $\dist(x, \ol{D}_n) \equiv (\phi_n(x))_-$ and $\dist(x, D_n^c) \equiv (\phi_n(x))_+$. A sequence $(a_n)_{n \ge 1}$ of real numbers converges to $a_0$ if and only if $(a_n)_+ \to (a_0)_+$ and $(a_n)_- \to (a_0)_-$. The ``only if'' part follows from the fact that 
$x \mapsto x_+$ and $x \mapsto x_-$ are continuous functions; the ``if'' part follows from the fact that $x = x_+ - x_-$. The rest is trivial. 

\medskip

{\bf (ii) $\Ra$ (iv)} Since the function $\phi_0$ is $1$-Lipschitz, as proved in \cite{Delfour1994}, we have:
$$
\max\limits_{\substack{x_n, x_0 \in \CK\\ \norm{x_n - x_0} \le \eps_n}} \left|
\phi_n(x_n) - \phi_0(x_0)\right| \le \max\limits_{\substack{x_n, x_0 \in \CK\\ \norm{x_n - x_0} \le \eps_n}} \left|\phi_n(x_n) - \phi_0(x_n)\right| + \eps_n \to 0.
$$

\medskip

{\bf (iv) $\Ra$ (v)} Take $\CK = \{z \in \BR^d\mid \norm{z} \le \max\norm{f_0} + 1\}$ and $\eps_n := \max\norm{f_n(t) - f_0(t)}$ for $n = 1, 2, \ldots$

\medskip

{\bf (v) $\Ra$ (i)} Take constant functions $f_n(t) \equiv x$ for $t \in [0, T]$ and $n = 0, 1, 2, \ldots$

\medskip

{\bf (i) $\Ra$ (vi)} Note that $\dist(x, \pa D_n) \equiv |\phi_n(x)|$ for $x \in \BR^d$ and $n = 0, 1, 2, \ldots$ Since $\phi_n(x) \to \phi_0(x)$, we have: $|\phi_n(x)| \to |\phi_0(x)|$. Now, let us show~\eqref{eq:inclusion}. Take $x \in D_0$. Then $\lim\phi_n(x) = \phi_0(x) > 0$, and so there exists $n_x$ such that for $n \ge n_x$ we get:
$\phi_n(x) > 0$, and therefore $x \in D_n$. Thus, $x \in \varliminf D_n$. We conclude that $D_0 \subseteq \varliminf D_n$. Similarly, we can prove that $\ol{D}_0^c \subseteq \varliminf\limits_{n \to \infty}\ol{D}_n^c$, which is equivalent to $\varlimsup\limits_{n \to \infty}\ol{D}_n \subseteq \ol{D}_0$. 

\medskip

{\bf (vi) $\Ra$ (i)} We have: $|\phi_n(x)| \to |\phi_0(x)|$ for every $x \in \BR^d$, as $n \to \infty$. Consider three cases:

\medskip

{\it Case 1:} $\phi_0(x) > 0$, which is equivalent to $x \in D_0$. Using the first inclusion from~\eqref{eq:inclusion}, we get: $x \in \varliminf D_n$, and so there exists $n_x$ such that for $n \ge n_x$, we have: $x \in D_n$, and $\phi_n(x) > 0$. Therefore, 
$\phi_n(x) = |\phi_n(x)| \to |\phi_0(x)| = \phi_0(x)$. 

\medskip

{\it Case 2:} $\phi_0(x) < 0$, which is equivalent to $x \in \ol{D}_0^c$. Then we use the second inclusion from~\eqref{eq:inclusion} and complete the proof similarly to Case 1. 

\medskip

{\it Case 3:} $\phi_0(x) = 0$. Then $|\phi_n(x)| \to |\phi_0(x)| = 0$, and so $\phi_n(x) \to 0$. 

\medskip

{\bf (vi) $\Ra$ (vii)} Follows from Lemma~\ref{lemma:Wijsman} (ii) above. 

\medskip

{\bf (vii) $\Ra$ (vi)} Fix $x \in \BR^d$ and let us show that $\dist(x, \pa D_n) \to \dist(x, \pa D_0)$. 

\begin{lemma} Assume~\eqref{eq:inclusion} holds. Take $x_0 \in \pa D_0$. Then there exists a sequence $(x_n)_{n \ge 1}$ such that $x_n \in \pa D_n$ and $x_n \to x_0$. 
\label{lemma:tending}
\end{lemma}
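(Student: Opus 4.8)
The plan is to reduce the statement to the single claim that $\dist(x_0, \pa D_n) \to 0$ as $n \to \infty$. Once that is in hand, for each $n$ large enough that $\pa D_n \ne \vn$ I would pick $x_n \in \pa D_n$ with $\norm{x_n - x_0} = \dist(x_0, \pa D_n)$ (a closest point exists because $\pa D_n$ is closed), and then $x_n \to x_0$ is immediate; the remaining finitely many $n$ may be taken arbitrarily, and the degenerate possibility that $\pa D_n = \vn$ for infinitely many $n$ forces $\ol{D}_0 = \BR^d$ via the second inclusion in~\eqref{eq:inclusion}, hence $\pa D_0 = \vn$ and the statement is vacuous. Note that since $D_0$ is open and $x_0 \in \pa D_0$, we have $x_0 \in \ol{D}_0 \setminus D_0$; I will also use that $D_0$ is regular open, $D_0 = \Int \ol{D}_0$, which holds for the class of domains considered here.

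To obtain $\dist(x_0, \pa D_n) \to 0$ I would argue by contradiction: assuming it fails, pass to a subsequence $(n_k)$ with $\dist(x_0, \pa D_{n_k}) \ge \rho > 0$ for all $k$. Then $x_0 \notin \pa D_{n_k}$, so each $k$ falls into one of the cases $x_0 \in D_{n_k}$ or $x_0 \in \ol{D}_{n_k}^c$, and after a further extraction I may assume one of these occurs for every $k$. In the first case, using the elementary fact that a segment joining a point of an open set to a point outside that set must meet its boundary, the bound $\dist(x_0, \pa D_{n_k}) \ge \rho$ forces $U(x_0, \rho) \subseteq D_{n_k} \subseteq \ol{D}_{n_k}$ for all $k$; hence $U(x_0, \rho) \subseteq \varlimsup_{n} \ol{D}_n \subseteq \ol{D}_0$ by~\eqref{eq:inclusion}, so $x_0 \in \Int \ol{D}_0 = D_0$, contradicting $x_0 \in \pa D_0$. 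In the second case, $\dist(x_0, \pa D_{n_k}) \ge \rho$ gives $U(x_0, \rho) \cap \ol{D}_{n_k} = \vn$; but $x_0 \in \ol{D}_0$ lets me choose $y \in D_0 \cap U(x_0, \rho)$, and the first inclusion in~\eqref{eq:inclusion} puts $y$ in $D_{n_k}$ for all large $k$, contradicting that $U(x_0, \rho)$ is disjoint from $\ol{D}_{n_k} \supseteq D_{n_k}$. Either alternative contradicts the choice of $(n_k)$, so $\dist(x_0, \pa D_n) \to 0$, completing the proof.

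I expect the routine part to be the point-set manipulations with $\varliminf$ and $\varlimsup$ of sets and the segment-crossing-boundary observation. The one genuinely delicate step is the appeal to regularity of $D_0$ (that a point of $\pa D_0$ cannot lie in $\Int \ol{D}_0$) in the first case above: this is precisely where the standing hypotheses on the domains, rather than merely~\eqref{eq:inclusion}, enter, and it is indispensable, since for a ``slit'' domain $D_0$ with $D_n \equiv D_0$ the inclusion~\eqref{eq:inclusion} holds while the conclusion fails at interior points of the slit.
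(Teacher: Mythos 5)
Your proof is correct, and it runs on the same two ingredients as the paper's own argument: the inclusions~\eqref{eq:inclusion} and the elementary fact that a segment joining a point of an open set to a point outside it must meet its boundary. The packaging differs: the paper negates the conclusion directly (some ball $U(x_0,\eps)$ misses $\pa D_{n_k}$ along a subsequence), picks $y \in U(x_0,\eps)\cap D_0$ and $z \in U(x_0,\eps)\cap \ol{D}_0^c$, uses~\eqref{eq:inclusion} to get $y \in D_{n_k}$ and $z \in \ol{D}_{n_k}^c$ for large $k$, and crosses the single segment $[y,z] \subseteq U(x_0,\eps)$; you instead reduce to $\dist(x_0,\pa D_n)\to 0$, split according to whether $x_0 \in D_{n_k}$ or $x_0 \in \ol{D}_{n_k}^c$, and then take nearest points on $\pa D_n$. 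Your explicit appeal to regularity of $D_0$ (that $x_0 \in \pa D_0$ cannot lie in $\Int\ol{D}_0$) is not an extra hypothesis relative to the paper: the paper's selection of $z \in U(x_0,\eps)\cap\ol{D}_0^c$ for every $\eps$ is exactly the same fact, used tacitly, so you are if anything more careful; likewise your handling of empty boundaries is a refinement the paper skips.

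One correction to your closing remark: taking $D_n \equiv D_0$ a slit domain is not a counterexample to the lemma without regularity, since then $\pa D_n = \pa D_0 \ni x_0$ and $x_n \equiv x_0$ works trivially. A correct example showing that~\eqref{eq:inclusion} alone does not suffice is $D_0$ a slit domain (so $\ol{D}_0 = \BR^d$ but $D_0 \ne \Int \ol{D}_0$) with $D_n = \BR^d \setminus \ol{U(a_n,1)}$, $\norm{a_n} \to \infty$: both inclusions in~\eqref{eq:inclusion} hold, yet $\pa D_n$ escapes to infinity and no sequence $x_n \in \pa D_n$ converges to an interior point of the slit. This is precisely the configuration your Case A (and the paper's choice of $z$) rules out via regularity.
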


\begin{proof} Assume the converse: there exists a neighborhood $U(x_0, \eps)$ and a subsequence $(n_k)_{k \ge 1}$ such that for $k \ge 1$, we have: $U(x_0, \eps)\cap\pa D_{n_k} = \varnothing$. Since $x_0 \in \pa D_0$, there exists $y \in U(x_0, \eps)\cap D_0$ and $z \in U(x_0, \eps)\cap\ol{D}_0^c$. Then $y \in \varliminf D_n$; that is, $y \in D_n$ for $n > n_y$; and $z \in \varliminf\ol{D}_n^c$, that is, $z \in \ol{D}_n^c$ for $n > n_z$. Let $k_0$ be large enough so that for $k \ge k_0$, $n_k > n_y\vee n_z$. Then $y \in D_{n_k}$ and $z \in \ol{D}_{n_k}^c$ for $k \ge k_0$. Therefore, $[y, z]\cap\pa D_{n_k} \ne \varnothing$; take $w_k \in [y, z]\cap\pa D_{n_k}$. But $[y, z] \subseteq U(x_0, \eps)$, because the open ball $U(x_0, \eps)$ is convex. Therefore, $U(x_0, \eps)\cap\pa D_{n_k} \ne \varnothing$. This contradiction completes the proof.
\end{proof}

Let $y_n \in \BR^d$ be the closest point on $\pa D_n$ to $x$: $\norm{x - y_n} = \dist(x, \pa D_n)$. 

\begin{lemma} The sequence $(y_n)_{n \ge 1}$ is bounded.
\label{lemma:y-n-bounded}
\end{lemma}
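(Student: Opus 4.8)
The entire statement rides on Lemma~\ref{lemma:tending}, which I am entitled to use because hypothesis \eqref{eq:inclusion} is in force throughout the proof of the implication (vii) $\Ra$ (vi). The plan is: fix one point $x_0 \in \pa D_0$ (such a point exists, since $D_0$ is a nonempty domain and, in the setting of Lemma~\ref{lemma:equiv}, the signed distance $\phi_0$ is defined, so $D_0 \ne \BR^d$ and $\pa D_0$ is a nonempty closed set); apply Lemma~\ref{lemma:tending} to produce a sequence $p_n \in \pa D_n$ with $p_n \to x_0$; and then compare $y_n$ with $p_n$.

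In detail: since $p_n \to x_0$, the sequence $(p_n)_{n \ge 1}$ is bounded, say $\norm{p_n} \le C$ for all $n$. By definition $y_n$ is a nearest point of $\pa D_n$ to $x$, so $\norm{x - y_n} = \dist(x, \pa D_n) \le \norm{x - p_n} \le \norm{x} + C$, whence $\norm{y_n} \le 2\norm{x} + C$. That is the whole argument; I do not anticipate a genuine obstacle here, as all the real work sits inside Lemma~\ref{lemma:tending}.

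The one minor point that deserves a word is that $y_n$ is well-defined, i.e. $\pa D_n \ne \varnothing$ and the infimum is attained. For all sufficiently large $n$ this is immediate from \eqref{eq:inclusion}: picking $a \in D_0$ and $b \in \ol{D}_0^c$, we have $a \in \varliminf_n D_n$ and $b \in \varliminf_n \ol{D}_n^c$ (the latter because $\varlimsup_n \ol{D}_n \subseteq \ol{D}_0$ yields $\ol{D}_0^c \subseteq (\varlimsup_n \ol{D}_n)^c \subseteq \varliminf_n \ol{D}_n^c$), so for large $n$ the segment $[a,b]$ joins a point of the open set $D_n$ to a point of the open set $\ol{D}_n^c$ and hence meets $\pa D_n$; moreover a nonempty closed subset of $\BR^d$ always contains a point nearest to $x$. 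Since boundedness of $(y_n)_{n \ge 1}$ is a tail property, the finitely many remaining indices are irrelevant, and $\sup_n \norm{y_n} < \infty$.
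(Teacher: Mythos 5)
Your argument is correct, and it is genuinely leaner than the paper's. The paper proves this lemma by a three-case analysis on the position of $x$: if $x \in D_0$ (or $x \in \ol{D}_0^c$), it picks an auxiliary point on the other side of $\pa D_0$, uses~\eqref{eq:inclusion} to place $x$ and that point on opposite sides of $\pa D_n$ for large $n$, and bounds $\dist(x, \pa D_n)$ by the length of the connecting segment, which must cross $\pa D_n$; only in the case $x \in \pa D_0$ does it invoke Lemma~\ref{lemma:tending} (applied at $x$ itself, which even gives $y_n \to x$). You instead apply Lemma~\ref{lemma:tending} once, at an arbitrary fixed $x_0 \in \pa D_0$, and observe that the resulting bounded sequence $p_n \in \pa D_n$ dominates the distances: $\norm{x - y_n} = \dist(x, \pa D_n) \le \norm{x - p_n}$, uniformly in the position of $x$. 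This removes the case split entirely and is perfectly legitimate, since Lemma~\ref{lemma:tending} only needs~\eqref{eq:inclusion}, which is part of hypothesis (vii), and $\pa D_0 \ne \varnothing$ is implicit in the setting (the signed distance $\phi_0$ is defined, and a nonempty open proper subset of the connected space $\BR^d$ has nonempty boundary). What the paper's longer route buys is independence from Lemma~\ref{lemma:tending} in the first two cases and the sharper conclusion $y_n \to x$ when $x \in \pa D_0$, but neither is needed for the boundedness claim; your remarks on well-definedness of the nearest point $y_n$ are correct and, given that Lemma~\ref{lemma:tending} already supplies points of $\pa D_n$, even slightly redundant.
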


\begin{proof} Consider three cases:

\medskip

{\it Case 1:} $x \in D_0$. Then $x \in D_0 \subseteq \varliminf\limits_{n \to \infty}D_n$. 
Therefore, $x \in D_n$ for $n \ge n_x$. Now, take any $y \in \ol{D}_0^c$; then 
$y \in \ol{D}_0^c \subseteq \varliminf\limits_{n \to \infty}\ol{D}_n^c$. Therefore, $y \in D_n$ for $n \ge n_y$. Take $n \ge n_x\vee n_y$; then 
$x \in D_n$ and $y \in \ol{D}_n^c$. Therefore, $[x, y]\cap \pa D_n \ne \varnothing$. Take some $u_n \in [x, y]\cap \pa D_n$; then $\norm{x - y_n} \le \dist(x, \pa D_n) \le \norm{x - u_n} \le \norm{x - y}$. Thus, $\norm{y_n} \le \norm{x} + \norm{x - y}$. 

\medskip

{\it Case 2:} $x \in \ol{D}_0^c$. This is similar to Case 1. 

\medskip

{\it Case 3:} $x \in \pa D_0$. Use Lemma~\ref{lemma:tending} below and find a sequence $x_n \in \pa D_n$ such that $x_n \to x$. Then $\norm{x - y_n} = \dist(x, \pa D_n) \le \norm{x - x_n} \to 0$. Therefore, $y_n \to x$, and $(y_n)_{n \ge 1}$ is bounded. 
\end{proof}

Let us show that 
\begin{equation}
\label{eq:sup-limit}
\dist(x, \pa D_0)  \le \varliminf\limits_{n \to \infty}\dist(x, \pa D_n).
\end{equation}
Take a subsequence $(n_k)_{k \ge 1}$. It suffices to show that there exists a subsequence $(n'_k)_{k \ge 1} \subseteq (n_k)_{k \ge 1}$ such that 
$$
\dist(x, \pa D_0) \le \lim\limits_{k \to \infty}\dist\bigl(x, \pa D_{n'_k}\bigr).
$$
The sequence $(y_{n_k})_{k \ge 1}$ is bounded by Lemma~\ref{lemma:y-n-bounded}. Therefore, there exists a subsequence $(n'_k)_{k \ge 1} \subseteq (n_k)_{k \ge 1}$ such that $y_{n'_k} \to \ol{y}$. By assumption (vii), $\ol{y} \in \pa D_0$. Therefore, 
$$
\dist(x, \pa D_0) \le \norm{x - \ol{y}} = \lim\limits_{k \to \infty}\norm{x - y_{n'_k}} = 
\lim\limits_{k \to \infty}\dist\bigl(x, \pa D_{n'_k}\bigr).
$$
This proves~\eqref{eq:sup-limit}. Now, let us show that
\begin{equation}
\label{eq:inf-limit}
\dist(x, \pa D_0) \ge \varliminf\limits_{n \to \infty}\dist(x, \pa D_n).
\end{equation}
By Lemma~\ref{lemma:tending}, there exists a sequence $\ol{y}_n \in \pa D_n$ such that $\ol{y}_n \to y_0$. Therefore, $\dist(x, \pa D_0) = \norm{x - y_0} = \lim\limits_{n \to \infty}\norm{x - \ol{y}_n}$. But $\norm{x - \ol{y}_n} \le \dist(x, \pa D_n)$. This proves~\eqref{eq:inf-limit}. 

\subsection{Proof of Lemma~\ref{lemma:aux}}  We need only to prove continuity of $\zeta$, the rest is done in \cite[Lemma 3.2]{MyOwn8}. Let $x_n \to x_0$ in $\CK_0$, and take $y_0$, a limit point of $\zeta(x_n)$. Without loss of generality assume 
$y_0 = \lim\limits_{n \to \infty}\zeta(x_n)$. Then 
$\dist(x_n, \pa D_0) = \norm{x_n - \zeta(x_n)} \to \norm{x_0 - y_0}$. 
But the distance function is continuous. So $\norm{x_0 - y_0} = \dist(x_0, \pa D_0)$. Since the closest point on $\pa D_0$ to $x_0$ is unique, we have: $y_0 = \zeta(x_0)$. The proof is complete.

\subsection{Proof of Corollary~\ref{cor:simple}} 

(i) The proof is trivial. 

(ii) Let us prove the first statement, when $\CK \subseteq D_0$; the second one is similar. From Lemma~\ref{lemma:equiv} (iii) we have: 
$\phi_n(x) \to \phi_0(x) > 0$ uniformly on $\CK$, and $\phi_0$ is continuous on $\CK$. Therefore, there exists $\eps > 0$ such that $\phi_0(x) \ge \eps$ for $x \in \CK$. By the uniform convergence, there exists $n_0$ such that for $n > n_0$ we have: $\phi_n(x) \ge \eps/2 > 0$ for $x \in \CK$. This completes the proof.

\subsection{Proof of Lemma~\ref{lemma:monotone}} Let us show the first case, when $D_n \uparrow D_0$; the second case is similar. 

{\bf Case 1. $x \in D$.} Then $\phi_0(x) =: r > 0$. There exists $n_x$ such that for $n \ge n_x$ we have: $x \in D_n$. Therefore, $\phi_n(x) > 0$ for $n \ge n_x$, and $\phi_n(x) = \dist(x, \pa D_n) = \dist(x, D_n^c)$. We have: $(\phi_n(x))_{n \ge n_x}$ is a nondecreasing sequence, and $\phi_n(x) \le \phi_0(x)$ for each $n \ge n_x$. 

Now, fix $\eps > 0$ and consider the closed ball $B(x, r - \eps) \subseteq D$. We have: $B(x, r - \eps) \subseteq \cup D_n$. But this ball is compact, so there exists a finite subcover $D_{n_1}, \ldots, D_{n_m}$. Take $k_x := \max(n_1, \ldots, n_m, n_x)$. Then $B(x, r - \eps) \subseteq D_{k_x}$. Therefore, $\phi_{k_x}(x) \ge r - \eps$. By monotonicity of $(\phi_n(x))_{n \ge n_x}$, we have: 
$\phi_n(x) \ge r - \eps = \phi_0(x) - \eps$ for $n \ge k_x$. Since $\eps > 0$ is arbitrary, this proves that $\phi_n(x) \to \phi_0(x)$ as $n \to \infty$. 

\medskip

{\bf Case 2. $x \notin D$.} Then $\phi_0(x) \le 0$. Therefore, $\dist(x, \pa D_0) = |\phi_0(x)|$. Take $y \in \pa D_0$ such that $\norm{y - x} = |\phi_0(x)|$. Fix $\eps > 0$; then there exists $z \in D$ such that $\norm{z - y} \le \eps$.  Therefore, $\norm{z - x} \le \norm{z - y} + \norm{y - x} \le |\phi_0(x)| + \eps$. Because $D = \cap D_n$, there exists $n_0$ such that $z \in D_n$ for $n \ge n_0$. Therefore, $\dist(x, D_n) \le |\phi_0(x)| + \eps$. But $x \notin D_n$ for all $n \ge 1$; therefore, $-\dist(x, D_n) = \phi_n(x)$. But $\dist(x, D_n) \le \norm{x-z} \le |\phi_0(x)| + \eps$. Therefore, 
\begin{equation}
\label{eq:conv-of-phis}
\phi_n(x) \ge -|\phi_0(x)| - \eps = \phi_0(x) - \eps\ \ \mbox{for}\ \ n \ge n_0.
\end{equation}
But $D_n \uparrow D_0$, and $x \notin D_0$. Therefore, $(\dist(x, D_n))_{n \ge 1}$ is nonincreasing, and so $(\phi_n(x) = -\dist(x, D_n))_{n \ge 1}$ is nondecreasing. Also, $\dist(x, D_n) \ge \dist(x, D_0)$, and so 
$$
\phi_n(x) = -\dist(x, D_n) \le -\dist(x, D_0) = \phi_0(x).
$$
Therefore, $(\phi_n(x))_{n \ge 1}$ is a nonincreasing sequence, bounded below by $\phi_0(x)$. Together with~\eqref{eq:conv-of-phis}, this gives $\phi_n(x) \to \phi_0(x)$ as $n \to \infty$. 

\subsection{Proof of Lemma~\ref{lemma:comparison-conv}} (i) The fact that $D_n \Ra D_0$ implies Wijsman convergence follows from Lemma~\ref{lemma:equiv} (ii). Now, let us give a counterexample which shows that weak convergence does not coincide with Wijsman convergence. Take the following sequence of domains in $\BR^2$:
$$
D_n :=\Int\left[\left(\BR\times\BR_+\right)\setminus\left([2^{-n-1}, 2^{-n}]\times[0, 1]\right)\right],\ \ n = 1, 2, \ldots,
$$
and the limiting domain $D_0 = \BR\times(0, \infty)$. Then $D_n \to D_0$ in Wijsman topology, but not in the weak sense. Indeed, for $x_0 = (0, 1)'$ we have: $\phi_n(x_0) \le 2^{-n-1}$, because the distance from $x_0$ to the boundary $\pa D_n$ is less than or equal to the distance to the point $(2^{-n-1}, 1)'$ on the boundary. But $\phi_0(x_0) = 1$, because the distance from $x_0$ to the boundary $\pa D_0$ (which is the $x_1$-axis) is equal to $1$. This contradicts that $\phi_n(x_0) \to \phi_0(x_0)$. 

(ii) Now, Hausdorff convergence implies weak convergence: if $D_n \to D_0$ in Hausdorff sense, then $D_n \to D_0$ in Wijsman sense, but also $D_n^c \to D_0^c$ in Hausdorff sense, so $D_n^c \to D_0^c$ in Wijsman sense; use Lemma~\ref{lemma:equiv}(ii) and complete the proof. 

But weak convergence does not imply Hausdorff convergence. Indeed, let $d = 2$ and 
$D_0 := \BR^+_2$, and $D_n$ be the result of rotation of $D_0$ counterclockwise by angle $\al_n$ around the origin, where $\al_n \to 0$. Then $D_n \Ra D_0$ (this is a particular case of Theorem~\ref{thm:SRBM} below), but not $D_n \to D_0$ in Hausdorff sense. 

(iii) If $\phi_n(x) \to \phi_0(x)$ uniformly on $\BR^d$, then $(\phi_n(x))_- = \dist(x, D_n) \to \dist(x, D_0) = (\phi_0(x))_-$ uniformly on $\BR^d$. Therefore, $D_n \to D_0$ in Hausdorff topology. Conversely, if $D_n \to D_0$ in Hausdorff topology, then $D_n^c \to D_0^c$ in Hausdorff topology, and 
$\dist(x, D_n) \equiv (\phi_n(x))_- \to \dist(x, D_0) \equiv (\phi_0(x))_-$, $\dist(x, D_n^c) \equiv (\phi_n(x))_+ \to \dist(x, D_0^c) \equiv (\phi_0(x))_+$, 
uniformly on $\BR^d$. Adding these convergence relations and noting that $a \equiv a_+ + a_-$ for $a \in \BR^d$, we complete the proof. 

\medskip

{\it Proof of Lemma~\ref{lemma:loc-unif-conv}.} {\bf (i) $\Ra$ (ii).} Without loss of generality, assume $n_k = k$. Take a sequence $(x_n)_{n \ge 0}$ of functions, as described in Lemma~\ref{lemma:loc-unif-conv}. Assume that $f_n(x_n)$ does not converge to $f_0(x_0)$ uniformly. Then there exists $\eps > 0$, a subsequence $(m_k)_{k \ge 1}$, and a sequence $(t_{m_k})_{k \ge 1}$ in $[0, T]$ such that 
\begin{equation}
\label{eq:777}
\bigl|f_{m_k}\bigl(x_{m_k}\bigl(t_{m_k}\bigr)\bigr) - f_0\bigl(x_0\bigl(t_{m_k}\bigr)\bigr)\bigr| \ge \eps.
\end{equation}
We can extract a convergent subsequence $t_{m'_k} \to t_0 \in [0, T]$. Then 
$$
x_{m'_k}\left(t_{m'_k}\right) \to x_0(t_0),\ \ \mbox{and}\ \ x_0\bigl(t_{m'_k}\bigr) \to x_0(t_0).
$$
Therefore, since $f_n \to f_0$ locally uniformly and $f_0$ is continuous on $E_0$, 
$$
f_{m'_k}\bigl(x_{m_k}\bigl(t_{m_k}\bigr)\bigr) \to f_0(x_0(t_0)),\ \ \mbox{and}\ \ 
f_0\bigl(x_0\bigl(t_{m_k}\bigr)\bigr) \to f_0(x_0(t_0)).
$$
This contradicts~\eqref{eq:777}. 

\medskip

{\bf (ii) $\Ra$ (i).} Take $x_{n_k}(t) \equiv z_{n_k}$ and $x_0(t) \equiv z_0$. 

\subsection{Proof of Lemma~\ref{lemma:conv-of-non-smooth}} Assume the condition (b) holds. Take a sequence $x_{n_k} \in \CV_{n_k}$ such that $x_{n_k} \to x_0$. Since the set $\CK := \ol{\{x_{n_k}\mid k = 1, 2,\ldots\}}$ is compact, from condition (b) we have: $\dist\bigl(x_{n_k}, \CV_0\bigr) \to 0$. 
The function $\dist(\cdot, \CV_0)$ is continuous. Therefore, $\dist(x_0, \CV_0) = 0$, which means $x_0 \in \CV_0$ (because the set $\CV_0$ is closed). Conversely, assume that for every sequence $(x_{n_k})_{k \ge 1}$ such that $x_{n_k} \in \CV_{n_k}$ and $x_{n_k} \to x_0$ we have: $x_0 \in \CV_0$. Take a compact set 
$\CK \subseteq \BR^d$. Let us show~\eqref{eq:condition-b}. Assume the converse: there exists $\eps > 0$ and a subsequence $(n_k)_{k \ge 1}$ such that 
$$
\max\limits_{x \in \CV_{n_k}\cap\CK}\dist(x, \CV_0) > \eps.
$$
Then there exists $x_{n_k} \in \CV_{n_k}\cap\CK$ such that $\dist(x_{n_k}, \CV_0) > \eps$. Now, the sequence $(x_{n_k})_{k \ge 1}$ is bounded, so there exists a limit point $\ol{x} := \lim x_{n'_k}$. Therefore, $\ol{x} \in \CV_0$ by our assumption. And
$$
\dist(\ol{x}, \CV_0) = \lim\limits_{k \to \infty}\dist(x_{n'_k}, \CV_0) \ge \eps.
$$
This contradiction completes the proof. 

\subsection{Proof of Lemma~\ref{lemma:conv-to-Rd}} Let us show the ``only if'' part. 
Take a compact set $\CK \subseteq \BR^d$ and assume that there exists a subsequence $(n_k)_{k \ge 1}$ such that for some $x_{n_k} \in \CK$, we have: $x_{n_k} \notin D_{n_k}$. Extract a convergent subsequence: $x_{n'_k} \to y \in \CK$. 
We claim that 
\begin{equation}
\label{eq:varlimsuple0}
\varlimsup\limits_{k \to \infty}\phi_{n'_k}(y) \le 0.
\end{equation}
Indeed, if $y \notin D_{n'_k}$, then $\phi_{n'_k}(y) \le 0$. If $y \in D_{n'_k}$, then 
$\phi_{n'_k}(y) = \dist\bigl(y, \pa D_{n'_k}\bigr) = \dist\bigl(y, D^c_{n'_k}\bigr) \le \norm{y - x_{n'_k}} \to 0$. This proves the claim~\eqref{eq:varlimsuple0}. But this contradicts the assumption that $\phi_n(y) \to \infty$. 

Now, let us show the ``if'' part. Take $x \in \BR^d$ and let $\CK := \ol{U(x, N)}$ for large $N$. By assumption, there exists $n_0$ such that for $n > n_0$ we have: $\CK \subseteq D_n$. So $x \in D_n$, and $\phi_n(x) = \dist(x, \pa D_n) \ge N$. Since $N$ is arbitrarily large, this completes the proof.

\section*{Acknowledgements}

The author would like to thank \textsc{Ioannis Karatzas} and \textsc{Ruth Williams} for help and useful discussion. The author would also like to express his gratitude to two anonymous referees, who pointed out many mistakes and misprints. This research was partially supported by  NSF grants DMS 1007563, DMS 1308340, DMS 1405210, DMS 1409434. 

\medskip\noindent


\begin{thebibliography}{12}

%\bibliographystyle{plain}

\bibitem{Wijsman1994} \textsc{Gerald Beer} (1994). Wijsman Convergence: a Survey. \textit{Set-Valued Anal.} \textbf{2} (1-2), 77-94.

\bibitem{BurdzyChen1998} \textsc{Krzysztof Burdzy, Zhen-Qing Chen} (1998). Weak Convergence of Reflected Brownian Motions. \textit{Electr. J. Probab.} \textbf{3} (4), 29-33. 

\bibitem{Marshall} \textsc{Krzysztof Burdzy, Donald Marshall} (1992). Hitting a Boundary Point with Reflected Brownian Motion. Springer, \textit{Lecture Notes in Math.} \textbf{1526}, 81-94.  

\bibitem{DW1995} \textsc{Jim G. Dai, Ruth J. Williams} (1995). Existence and Uniqueness of Semimartingale Reflecting Brownian Motions in Convex Polyhedra. \textit{Th. Probab. Appl.} \textbf{40} (1), 3-53. 


\bibitem{Delfour1994} \textsc{Michel C. Delfour, Jean-Paul Zolesio} (1994). Shapre Analysis via Oriented Distance Functions. \textit{J. Funct. Anal.} \textbf{123} (1), 129-201. 

\bibitem{HR1981a} \textsc{J. Michael Harrison, I. Martin Reiman} (1981). Reflected Brownian Motion on an Orthant. \textit{Ann. Probab.} \textbf{9} (2), 302-308. 

\bibitem{HornBook} \textsc{Roger A. Horn, Charles R. Johnson} (1990). \textit{Matrix Analysis.} Cambridge University Press. 

\bibitem{Thesis} \textsc{Scott Hottovy} (2013). The Smoluchowski-Kramers Approximation for Stochastic Differential Equations with Arbitrary State-Dependent Friction. \textit{Ph.D. Thesis}. 

\bibitem{IWBook} \textsc{Nobuyuki Ikeda, Shinzo Watanabe} (1989). \textit{Stochastic Differential Equations and Diffusion Processes.} North-Holland. 

\bibitem{KR2014} \textsc{Weining Kang, Kavita Ramanan} (2014). On The Submartingale Problem for Reflected Diffusions in Domains with Piecewise Smooth Boundaries. Available at arXiv:1412:0729. 

\bibitem{KangWilliams2007} \textsc{Weining Kang, Ruth J. Williams} (2007). An Invariance Principle for Semimartingale Reflecting Brownian Motions in Domains with Piecewise Smooth Boundaries. \textit{Ann. Appl. Probab.} \textbf{17} (2), 741-779. 

\bibitem{KurtzProtter1991} \textsc{Thomas G. Kurtz, Philip Protter} (1991). Weak Limit Theorems for Stochastic Integrals and Stochastic Differential Equations. \textit{Ann. Probab.} \textbf{19} (3), 1035-1070. 

\bibitem{MunkresBook} \textsc{James R. Munkres} (1975). \textit{Topology: a First Course.} Prentice-Hall. 

\bibitem{Ramasubramanian1983} \textsc{S. Ramasubramanian} (1983). Recurrence of Projections of Diffusions. \textit{Sankhy A} \textbf{45} (1), 20-31. 

\bibitem{Ramasubramanian1988} \textsc{S. Ramasubramanian} (1988). Hitting of Submanifolds by Diffusions. \textit{Probab. Th. Rel. Fields} \textbf{78} (1), 149-163. 

\bibitem{RW1988} \textsc{I. Martin Reiman, Ruth J. Williams} (1988). A Boundary Property of Semimartingale Reflecting Brownian Motions. \textit{Probab. Th. Rel. Fields} \textbf{77} (1), 87-97. 

\bibitem{MyOwn6} \textsc{Andrey Sarantsev} (2016). Infinite Systems of Competing Brownian Particles. Available at arXiv:1403.4229. 

\bibitem{MyOwn8} \textsc{Andrey Sarantsev} (2016). Penalty Method for Obliquely Reflected Diffusions. Available at arXiv:1509.01777.

\bibitem{MyOwn3} \textsc{Andrey Sarantsev} (2015). Triple and Simultaneous Collisions of Competing Brownian Particles. \textit{Electr. J. Probab.} \textbf{20} (1), 1-29. 

\bibitem{TW1993} \textsc{Lisa M. Taylor, Ruth J. Williams} (1993). Existence and Uniqueness of Semimartingale Reflecting Brownian Motions in an Orthant. \textit{Probab. Th. Rel. Fields} \textbf{75} (4), 459-485. 

\bibitem{Wijsman1966} \textsc{Robert A. Wijsman} (1966). Convergence of Sequences of Convex Sets, Cones and Functions II. \textit{Trans. Amer. Math. Soc.} \textbf{123},  32-45. 

\bibitem{Wil1987} \textsc{Ruth J. Williams} (1987). Reflected Brownian Motion with Skew-Symmetric Data in a Polyhedral Domain. \textit{Probab. Th. Rel. Fields} \textbf{75} (4), 459-485. 

\bibitem{Wil1995} \textsc{Ruth J. Williams} (1995). Semimartingale Reflecting Brownian Motions in the Orthant. Springer, \textit{IMA Vol. Math. Appl.} \textbf{71}, 125-137.

\bibitem{Wil1998b} \textsc{Ruth J. Williams} (1998). An Invariance Principle for Semimartingale Reflecting Brownian Motions in an Orthant. \textit{Queueing Syst. Th. Appl.} \textbf{30} (1-2), 5-25.


\end{thebibliography}
\end{document}